\documentclass[11pt]{amsart}

\usepackage[adobe-utopia]{mathdesign}

\usepackage{geometry}               
\usepackage{amsrefs} 				
\usepackage[T1]{fontenc}
\geometry{letterpaper}
\usepackage{amssymb, color}
\usepackage[toc,page]{appendix}

\usepackage{fancyhdr}
\setlength{\headheight}{13pt}
\usepackage{lastpage} 

\usepackage{xypic}
\usepackage{enumerate} 

\usepackage{fancyvrb}	
\usepackage[usenames,dvipsnames]{xcolor}
\fvset{frame=single,framesep=1mm,fontfamily=tt,fontsize=\scriptsize,numbers=left,framerule=.3mm,numbersep=1mm,commandchars=\\\{\}}

\usepackage{graphicx}
\usepackage[colorlinks = true, citecolor = red]{hyperref}
\usepackage[capitalise, noabbrev]{cleveref}

\newcommand*{\myTagFormat}[2]{(\cref{#1})($#2$)}

\fancyhf{} 
\rfoot{Page \thepage\ of \pageref{LastPage}}
\fancyhead[LE]{\leftmark} 
\fancyhead[RO]{\rightmark} 

\pagestyle{fancy}
\bibliographystyle{amsalpha}


\def\apg{a_{\tild P} - a_{\tild G}}
\def\A{\mathbb A}
\def\C{\mathbb C}
\def\Q{\mathbb Q}
\def\R{\mathbb R}

\def\AAA{\mathcal A}	
\def\CCC{\mathcal C}
\def\DDD{\mathcal D}
\def\FFF{\mathcal F}
\def\LLL{\mathcal L}
\def\MMM{\mathfrak M}	
\def\PPP{\mathcal P}
\def\O{\mathcal O}
\def\o{\scalebox{0.8}{$\scriptstyle\mathcal{O}$}}
\def\myshift{\mathcal S}	
\def\UUU{\mathcal U}
\def\aaa{\mathfrak a}
\def\d{\text d}
\def\K{\textbf K}
\def\Ad{\operatorname{Ad}}
\def\Aut{\operatorname{Aut}}
\def\bs{\setminus} 			
\def\det{\operatorname{det}}
\def\dim{\operatorname{dim}}
\def\Gal{\operatorname{Gal}}
\def\GL{\operatorname{GL}}
\def\geom{\text{geom}}

\def\Id{\operatorname{Id}}
\def\Ind{\operatorname{Ind}}
\def\l{\ell}
\def\Lone{L^1}
\def\Ltwo{L^2}
\def\lmod#1{\left\lvert #1 \right\vert} 
\def\Lieg{\mathfrak g}
\def\mod#1{\vert #1 \vert} 
\def\norm#1{\Vert #1 \Vert} 
\def\RRR{\mathcal R}
\def\Re{\operatorname{Re}}
\def\Res{\operatorname{Res}}
\def\se{\subseteq}
\def\sprod#1#2{\left\langle #1 , #2 \right\rangle}  
\def\tild{\widetilde}
\def\trace{\operatorname{trace}}
\def\Vol{\operatorname{Vol}}

\newtheorem{theorem}{Theorem}[section]

\newtheorem{assumption}[theorem]{Assumption}
\newtheorem{lemma}[theorem]{Lemma}

\theoremstyle{remark}
\newtheorem{remark}[theorem]{Remark}

\begin{document}

\title{Absolute Convergence of the Twisted Trace Formula}
\author{ Abhishek Parab}
\address{Purdue University}
\email{aparab@purdue.edu}

\maketitle
\setcounter{tocdepth}{1}		

\begin{tabular} {ll}
\textbf{Author} : & Abhishek Parab \\ & Purdue University \\ & 150 N. University St, \\& West Lafayette, IN 47906 \\
\textbf{Email} : & aparab@purdue.edu \\
\textbf{Telephone} : & +1-765-418-3456 \\
\textbf{ORCID} : &  \href{https://orcid.org/0000-0002-4824-6419}{0000-0002-4824-6419} \\[2em]
\end{tabular}

\textbf{Title} : Absolute Convergence of the Twisted Trace Formula \\

\textbf{Abstract} : We show that the distributions occurring in the geometric and spectral side of the twisted Arthur-Selberg trace formula extend to non-compactly supported test functions. The geometric assertion is modulo a hypothesis on root systems proven when the group is split. The result extends the work of Finis-Lapid (and M\"uller, spectral side) to the twisted setting. We use the absolute convergence to give a geometric interpretation of sums of residues of certain Rankin-Selberg L-functions. \\[1em]

\begin{tabular}{ll}
\textbf{Keywords} : & twisted trace formula, Arthur-Selberg trace formula, automorphic forms, \\ & Rankin-Selberg $L$-functions, Beyond Endoscopy \\
\textbf{MSC} : & 11F72 (primary), 11F70, 22E55 (secondary). \\[2em]
\end{tabular}

\textbf{Acknowledgements} : The author would like to thank his advisor Prof. Freydoon Shahidi for everything, and more. He also thanks J. Getz, E. Lapid, D.B. McReynolds, N. Miller, C.-P. Mok, P. Solapurkar and S. Yasuda for useful discussions and encouragement. The proof of \cref{majer} is due to P. Majer via Mathoverflow. The application was suggested by J. Getz and we thank him also for insightful discussions. We thank the anonymous referee and the editor for their useful comments. The author was supported by National Science Foundation grant DMS-1162299 through Prof. Shahidi. 

\newpage

\tableofcontents


\section{Introduction}

Understanding the automorphic representations of a connected reductive group over a global field has been a central problem in the theory of automorphic forms and the Arthur-Selberg trace formula has been an indispensable tool in doing so. The trace formula was conceived by Selberg \cite{MR0088511} to show the existence of Maass forms via a Weyl law, and vastly generalized by Arthur to any connected reductive group $G$ over a number field $F$. It is an identity of two distributions on $G(\A)$ where $\A = \A_F$ is the ring of adeles of $F$ viz., 
\[ J^G_{\text{spec}}(f) = J^G_{\text{geom}}(f), \]
$f$ being a smooth function on $G(\A)$ of compact support. The spectral side is a sum-integral over automorphic representations of $G$ and the geometric side contains weighted orbital integrals. It was realized early on that an appropriate `twisted' trace formula developed for a connected component $\tild G$ of a reductive group $G$ would be useful in proving the `endoscopic' cases of Langlands' Functoriality conjectures. If $\theta$ is an $F$-automorphism of $G$ of finite order, we can form the reductive group $G \rtimes \langle \theta \rangle$ of which $\tild G = G \rtimes \theta$ is a connected component. The trace formula for $\tild G$ or the twisted trace formula was developed in the lectures given at the Friday Morning seminar at IAS organized by Clozel, Labesse and Langlands \cite{CLL} and has been exposed  and improved upon in the book \cite{LW}. The twisted trace formula has also been instrumental in proving the cyclic (solvable) base change in the book of Arthur and Clozel \cite{AC}. 

If $\phi : {}^LH \to {}^LG$ is an $L$-homomorphism between the $L$-groups of quasisplit connected reductive groups $H$ and $G$, Langlands' Functoriality predicts a transfer of automorphic representations of $H$ to $G$. Among these are the $L$-homomorphisms arising out of endoscopic groups of which the classical groups (orthogonal and symplectic) are prototypical examples when $G = GL(n)$ and $\theta(g) = {}^t\!g^{-1}$. Arthur \cite{MR3135650} proved functoriality in this case and Mok \cite{MR3338302} extended it to unitary groups. This had been conditional on the Fundamental Lemma which was resolved by Ng\^o \cite{MR2653248} and the stabilization of the twisted trace formula by M{\oe}glin-Waldspurger \cite{MWTFL1, MWTFL2}. Arthur proved the geometric side of the trace formula converges for $f \in \CCC_c^\infty(G(\A))$ but he 
 deftly didn't make use of the convergence of the spectral side. The convergence was proven in \cite{MS} for $G = GL(n)$ and later by Finis, Lapid and M{\"u}ller \cite{FL11a, FLM} for general $G$. Our work extends their result to the twisted trace formula. 

Finis-Lapid have proven the absolute convergence of the spectral and geometric sides for more general test functions than those of compact support whose extension to the twisted setting is the main theme of this paper. Let $\K$ be a ``good'' maximal compact subgroup of $G(\A)$ and $K$ be an open compact subgroup of the finite adeles $G(\A_f)$ in $\K$. They consider a class $\CCC(G(\A), K)$ of test functions $f$ on $G(\A)$ which are right $K$-invariant at non-Archimedean places and at the Archimedean places satisfy, $\norm{f * X}_{L^1(G(\A))} < \infty$ for every  $X \in \UUU(\Lieg_\C)$, the universal enveloping algebra of the Lie algebra $\Lieg_\C$  acting as differential operators. For such test functions they prove the convergence on the spectral side \cite{FL11b, FLM} and also the geometric side \cite{FL11a, FL16}, thus constructing a trace formula for a broader class of test functions. In the case when $\tild G$ is a component of $G \rtimes \langle \theta \rangle$, the space $\CCC(\tild G(\A), K)$ can be defined similarly by considering the action of $X$ on a smooth function $f$ defined on $\tild G(\A)$. The main result of this article is to derive the convergence and hence the continuity of the distributions occurring in the twisted Arthur-Selberg trace formula with \cref{geom} and \cref{spectral} being the statements for the geometric and spectral sides respectively. 

The proof of convergence in the geometric side involves estimating the sums over certain twisted Bruhat cells and using the slow decay of intertwining operators and is carried out in Sections~\ref{section:geom}~and~\ref{section:3.2tild}. The main steps follow \cite{FL11a, FL16}  except that the twisted equivalent of the crucial Lemma 2.2 of \cite{FL11a} does not hold. The corresponding modification is discussed in \cref{section:RCL} as the Root Cone Lemma. The convergence on the geometric side is modulo this geometric lemma but we prove it completely when $G$ is split and also for cyclic base change. It is a lemma about root systems involving automorphisms of the Dynkin diagram of $G$ so depends only on the semisimple part of $G$. We reduce it to split simple groups and then prove all cases in the Cartan-Killing classification. A crucial step in the proof when $G$ is of type $A_{n-1}$ was shown to us by P. Majer as an answer to a question \cite{mathoverflow} on MathOverflow. The lemma for $E_6$ type is proven using the Mathematical software {\tt{SageMath}} \cite{sagemath}. 

The convergence results usually involve a parameter $T$ in a finite dimensional vector space $\aaa_0$ (see \cref{section:notation} for definition) chosen sufficiently away from the origin. In our setting however, $T$ is allowed to vary only along a line but it is enough to ensure the convergence. Since the distributions involved are polynomials in $T$, they can be extended to any $T \in \aaa_0$ and in particular to the special point $T_0$ that makes the distributions independent of the chosen minimal parabolic subgroup, see \cite{MR625344}*{Lemma 1.1}. For applications to limit multiplicities it is essential to keep track on the dependence the compact open subgroup $K$ of $G(\A_f)$ in proving the bound on the seminorm $\mu$ but the non-twisted bounds work verbatim. 

\Cref{section:spec} is devoted to proving the spectral side. The convergence of the spectral side in the non-twisted setting involved estimating the derivatives of certain intertwining operators appearing in the spectral expansion and has been the main result of \cite{FL11b}. The crucial difference on the spectral side of the twisted trace formula is that the trace here is a composition of operators on different spaces. We introduce a unitary shift operator which converts the twisted trace formula to the usual one and invoke the estimates in [ibid]. 

In \cref{section:application} we give an application of the continuity of the spectral side towards giving a geometric interpretation for a sum of residues of certain Rankin-Selberg $L$-functions, under the hypothesis of \cref{assump}. This was suggested to us by J. Getz and has conjectural applications towards Langlands' `Beyond Endoscopy' proposal. Another possible application of the main results of this paper is the Weyl Law for self-dual automorphic representations in the style of \cite{LM} which would explain the endoscopic classification of classical groups in a more quantitative way. This is currently a work in progress. 


\section{Notation} \label{section:notation}

	We will follow the notations of Labesse and Waldspurger \cite{LW} rather than of Arthur. Throughout, $G$ will denote a connected reductive group over $\Q$ and $\tild G$, a twisted $G$-space [ibid., Chapter 2] such that $\tild G(\Q)$ is nonempty. Thus $\tild G$ is a left-$G$-torsor equipped with a map 
	\[ \Ad : \tild G \to \Aut(G) \]
	which for $x \in G, \delta \in \tild G$ satisfies,
	\[ \Ad(x\delta) = \Ad(x) \circ \Ad(\delta). \]
	We will assume that $G$ and $\tild G$ are the components of a reductive algebraic group. This is somewhat more restrictive than \cite{LW} but suffices for most applications of the twisted trace formula. All algebraic subgroups of $G$ will be implicitly assumed to be defined over the rational numbers. 
	We define a right-$G$-action on $\tild G$ via 
	\[ \delta x = \theta(x) \delta, \qquad \text{with } \theta = \Ad(\delta). \]
	Any element $y \in \tild G(\A)$ can be written (not uniquely) as $y = x \delta$ with $x \in G(\A)$ and $\delta \in \tild G(\Q)$. 
		
	Throughout this paper, we fix a minimal parabolic subgroup $P_0$ of $G$ defined over $\Q$ with Levi decomposition $P_0 = M_0 \ltimes N_0$ and a maximal compact subgroup $\K_G = \K = \K_\infty \K_f$ which is admissible relative to $M_0$ in the sense of \cite{MR625344}*{\S 1}. Thus we have the Iwasawa decomposition 
	\[ G(\A) = P_0(\A) \K_G. \]

	We will denote the (finite) Weyl group of $(G,T_0)$ by $W^G$ or $W$. Note that $M_0$ is the centralizer of a maximal torus which we denote by $T_0$. The Lie algebra of $G$ will be denoted by $\Lieg$ and the universal enveloping algebra of its complexification by $\mathcal U(\Lieg_\C)$. 
	
	Let $\LLL^G(M_0) = \LLL^G = \LLL$ denote the set of Levi subgroups containing $M_0$, i.e., the (finite) set of centralizers of subtori of $T_0$. For $M \in \LLL$, we have the following notation. 
	
	\begin{itemize}
		\item We shall denote by $\LLL^G(M), \FFF^G(M), \PPP^G(M)$, the (finite) set of Levi subgroups containing $M$, parabolic subgroups containing $M$ and parabolic subgroups with Levi part $M$ respectively. Whenever clear from the context, we shall ignore the superscript $G$ or replace it with a reductive subgroup of $G$. The notations $\LLL, \FFF$ (and $\PPP$) will denote the Levi (resp. parabolic) subgroups containing $M_0$ (resp. $P_0$). 
		
		\item The Weyl group $W^G(M) = W(M)= N_{G(\Q)}(M)/M$ can be identified with a subgroup of $W$. 
		\item $T_M$ is the split part of the identity component of the center of $M$ and $A_M = A_0 \cap T_M(\R)$.
		 
		\item The real vector space $\aaa_M^*$ is spanned by the lattice $X^*(M)$ of rational characters of $M$ and $\aaa_{M,\C}^*$ is its complexification. The dual space $\aaa_M$ spanned by the co-characters of $T_M$ is the Lie algebra of $A_M$. Denote by $a_M$ the dimension of each. 
		
		\item The map $H_M : M(\A) \to \aaa_M$ is the homomorphism given by $\sprod{\chi}{H_M(m)} = \log \mod{\chi(m)}_{\A^*}$, for any $\chi \in X^*(M)$. Its kernel is $M(\A)^1$. 
		
		\item $X_M =  A_M M(\Q) \bs M(\A). $
		
		\item The Weyl group $W(M)$ acts on $\PPP(M)$ and $\FFF(M)$ by conjugation; $w.P = n_w P n_w^{-1}$. 
		
		\item $\RRR_M$ is the set of reduced roots of $T_M$ on $\Lieg$ and for every root $\alpha \in \RRR_M$, $\alpha^\vee$ denotes the corresponding co-root. It will be abbreviated $\RRR_0$ when $M = M_0$. 
		
		\item For $w \in W, Q(w)$ denotes the smallest standard parabolic subgroup containing a representative $n_w$ of $w$. 

	\end{itemize}
	
	In particular, we have the above notation for $M=G$. For $P \in \PPP(M)$, we use the following additional notation. 
	\begin{itemize}
	
		\item $N_P$ is the unipotent radical of $P$ and $M_P$ is the unique element $L \in \LLL(M)$ with $P \in \PPP(L)$. 
		
		\item $A_P = A_{M_P}; \ \aaa_P = \aaa_M;\ a_P = \text{ dim } \aaa_P$.
				
		\item For a point $Z\in \aaa_0, Z_P$ denotes the projection of $Z$ onto $\aaa_P$. 
		
		\item The map $H_P : G(\A) \to \aaa_P$ is the extension of $H_M$ to a left $N_P(\A)-$ and right $\K$-invariant map. 
		
		\item $\Delta_P$ (resp. $\hat{\Delta}_P$) is the subset of simple roots (resp. simple weights) of $P$, which is a basis for $(\aaa_P^G)^*$. 
		
		\item $X_P = A_P N_P(\A)M_P(\Q) \bs G(\A), \ Y_P = A_G P(\Q) \bs G(\A)$. 
		
		\item Denote by $\xi_P$ the sum of roots in $\Delta_0^P$. More generally if $Q$ is a parabolic subgroup containing $P$ then denote by $\xi_P^Q$ the sum of roots in $\Delta_P^Q$ which is the set $\Delta_0^Q \setminus \Delta_0^P$. 

		\item For $X \in \aaa_P, d_P(X) = \inf_{\alpha \in \Delta_P} \alpha(X)$. Indeed, it denotes the distance of $X$ from the `walls'. 
		
		\item The function $\tau_P^Q$ is the characteristic function of the open cone
			\[ \tau_P^Q(H) = 1 \Leftrightarrow \alpha(H) > 0 \quad \forall \alpha \in \Delta_P^Q. \]
			The function $\hat\tau_P^Q$ is defined analogously, see \cite{MR518111}*{Section 1}. 
		
		\item The Killing form induces an invariant inner product and a Euclidean structure on $\aaa_P^G$. $\Vol(\Delta_P^\vee)$ is the volume of the parallelopiped in $\aaa_P^G$ whose sides are roots in $\Delta_P^\vee$. For $\Lambda \in i \aaa_M^*$ regular, define \footnote{Arthur uses $\theta_P(\Lambda)$ for the inverse of $\epsilon_P(\Lambda)$ but following \cite{LW}, we reserve the symbol $\theta$ exclusively for the automorphism on $G$. }
	\begin{equation} \label{epsilon}
		\epsilon_P (\Lambda) = \Vol(\Delta_P^\vee) \prod_{\alpha \in \Delta_P} \sprod{\Lambda}{\alpha^\vee}^{-1}.
	\end{equation}
		We define $\hat\epsilon_P(\Lambda)$ by replacing $\Delta_P$ by $\hat \Delta_P$ and similar sets $\epsilon_P^Q, \hat\epsilon_P^Q$ whenever $P \subseteq Q$. 
		
		\item $\AAA(X_P)$ is the space of automorphic forms on $X_P$ (cf. \cite{MW}*{\S I.2.17} and \cite{MR546598}*{\S 4}). For an automorphic representation $\sigma$ of $M$, the space $\AAA(X_P, \sigma)$ is the space of automorphic forms $\Phi$ over $X_P$ such that for every $x \in G(\A)$, the function 
		\[ m \mapsto \Phi(mx), \quad \text{for } m \in M(\A) \]
		is an automorphic form in the $\sigma-$isotypical space of $\Ltwo_{\text{disc}}(X_M)$. 
		
		\item The spaces $\AAA(X_P), \AAA_{\text{disc}}(X_P)$ and $\AAA_{\text{cusp}}(X_P)$ (resp. square-integrable and cuspidal  forms) are pre-Hilbert spaces with respect to the inner product 
		\[ \sprod{\Phi}{\Psi}_P = \int_{X_P} \Phi(x) \Psi(x) \d x. \]
		We denote by $\overline{\AAA}(X_P)$ the Hilbert space completion of $\AAA(X_P)$. 
	\end{itemize}
	
	Now we define some objects related to $\tild G$. Some of the above notation needs to be modified appropriately for such objects. 
	\begin{itemize}
	
		\item We fix once and for all an element $\delta_0 \in \tild G(\Q)$ such that the automorphism $\theta_0 = \Ad(\delta_0)$ preserves $P_0$ and $M_0$. Such an element is uniquely determined modulo conjugation by $M_0(\Q)$. 
		
		\item A parabolic subset $\tild P$ of $\tild G$ is the normalizer in $\tild G$ of a parabolic subgroup $P$ of $G$ such that $\tild P(\Q) \neq \emptyset$. 
		
		\item $\tild M_P$ is the Levi subset of $\tild P$ if there is a Levi decomposition $\tild P = \tild{M_P}N_P$, where $N_P$ is the unipotent radical of $P$, which is invariant under $\Ad(\delta), \ \delta \in \tild P(\Q)$. 
		
		\item $\tild{P_0} = P_0.\delta_0$ and any parabolic subset containing $\tild{P_0}$ is called standard, denoted by $\PPP^{\tild G}(M_0)$ and abbreviated as $\tild \PPP(M_0)$ or simply $\tild \PPP$. The sets $\LLL^{\tild G}(M)$ and $\FFF^{\tild G}(M)$ are defined similarly for any $M \in \LLL$. 
		
		\item When $P = P_0$, we extend the map $H_0 = H_{P_0}$ to $\tild G(\A)$ by $H_0(x\delta_0) = H_0(x)$.  This is well-defined because $G(\Q)$ is in the kernel of $H_P$. 
		
		\item The automorphism $\theta_0$ on $G$ induces a linear map, also denoted by $\theta_0$, on $\aaa_M$ via the action on co-characters. The space $\aaa_{\tild P} = \aaa_{\tild M}$ is the set of vectors of $\aaa_M$ fixed under this automorphism. In particular, we can identify $\aaa_{\tild P}$ as a subset of $\aaa_P$. 
		
		\item As before, $a_{\tild P}$ will denote the dimension of $\aaa_{\tild P}$. 
		
		\item An inclusion $\tild P \subset \tild Q$ of parabolic subsets gives $\aaa_{\tild Q} \subset \aaa_{\tild P}$ and a canonical decomposition 
		\[ \aaa_{\tild P} = \aaa_{\tild Q} \oplus \aaa_{\tild P}^{\tild Q}. \]
		
		\item A root $\alpha \in \Delta_P^Q$  induces a linear form $\tild \alpha$ on $\aaa_{\tild P}^{\tild Q}$ by averaging 
		\[ \displaystyle \frac{1}{l} \sum_{r=0}^{l-1} \theta_0^r(\alpha), \]
		where $l$ is the order of the automorphism $\theta_0$. Denote by $\Delta_{\tild P}^{\tild Q}$ the set of such orbits. Analogously we define the set $\hat{\Delta}_{\tild P}^{\tild Q}$ of orbits of weights $\varpi \in \hat{\Delta}_P^Q$. 
		
		\item The characteristic functions $\tau_{\tilde P}^{\tilde Q}$ and $\hat\tau_{\tilde P}^{\tilde Q}$ are defined by replacing $\Delta_P^Q$ and $\hat\Delta_P^Q$ in the definition of $\tau_P^Q$ and $\hat\tau_P^Q$ with $\Delta_{\tilde P}^{\tilde Q}$ and $\hat\Delta_{\tilde P}^{\tilde Q}$ respectively. When $Q=G$ we ignore the superscript. 
		
		\item The (inclusion-reversing) bijection $P \mapsto \Delta_{P_0}^P$ between standard parabolic subgroups and subsets of the simple roots, in the twisted case becomes a bijection $\tild P \mapsto \Delta_{\tild P}^{\tild P_0}$ of corresponding sets. 
		
		\item For a standard parabolic subgroup $P$, we define subgroups $P^- \subseteq P \subseteq P^+$ as follows. $P^-$ is the standard parabolic subgroup whose Levi has, for simple roots, those $\alpha \in \Delta_0^P$ such that the orbit of $\alpha$ under $\theta_0$ is contained in $\Delta_0^P$. Likewise $P^+$ is the standard parabolic subgroup whose Levi has, for simple roots, elements of orbits of $\Delta_0^P$ under $\theta_0$. 
		
		\item Similar to the non-twisted case, we define for $\Lambda \in i\aaa_{\tild M}^*$ regular, 
		\[ \epsilon_{\tild P}(\Lambda) = \Vol(\Delta_{\tild P}^\vee) \prod_{\alpha \in \Delta_{\tild P}} \sprod{\Lambda}{\alpha^\vee}^{-1}. \]
		
		\item The Weyl set $W^{\tild G} = \tild W$ is the quotient of the normalizer of $M_0$ in $\tild G$ by $M_0$. Indeed,
		\[ \tild W = W \rtimes \theta_0 \]
		and the representatives of the Weyl set can be chosen as $n_w \delta_0$ where $n_w$ are representatives of $W$. 
		
		\item Throughout the paper, we shall fix a unitary character $\omega$ of $G(\A)$ which is trivial on $A_G G(\Q)$. 
		
		\item A (twisted) representation of $(\tild G, \omega)$ is a representation $\pi$ of $G$ on a vector space $V$ along with an invertible endomorphism 
		\[ \tild{\pi}(\delta, \omega) \in \GL(V), \quad \delta \in \tild G(\Q) \]
		satisfying for every $x,y \in G$ and $\delta \in \tild G(\Q)$,
		\[ \tild \pi (x\delta y, \omega) = \pi(x) \ \tild \pi(\delta, \omega) \ (\pi \otimes \omega)(y). \]
		\item Since $G$ is unimodular, the measure on $G(\A)$ induces a measure on $\tild G(\A)$ via 
		\[ \int_{\tild G(\A)} h(y) \d y = \int_{G(\A)} h(x\delta) \d x, \quad \delta \in G(\Q). \]
		
		\item For an integrable function $f$ on $\tild G(\A)$, define 
			\[ f^1(y) = \int_{A_G} f(zy) \d z. \]
		
	\end{itemize}
	
	\begin{remark} \label{rem:defQ}
		By restriction of scalars there is no loss of generality to consider groups defined over $\Q$ and henceforth we assume so.  
	\end{remark}
	

\section{Preliminaries}

	There is an action of $\tild G(\A)$ on the homogeneous space $X_G = A_G G(\Q) \bs G(\A)$ given by 
	\[ (y, \dot{x} ) \mapsto \dot x * y = \delta^{-1}x y \]
	where $y \in \tild G(\A), x \in G(\A) $ is a representative of $\dot x$ and $\delta$ is any element of $\tild G(\Q)$. When there is no confusion, we shall denote $\dot x$ by its representative $x$. 
	
	\subsection{The space $\CCC(\tild G(\A), K)$}
	
	Fix a compact open subgroup $K$ of $G(\A_f)$ in $\K_f$, where $\A_f$ is the ring of finite adeles. The right action of $G(\A)$ on $\tild G(\A)$ restricts to that of $K$. For a smooth function $h$ on $\tild G(\R)$ and $X \in \UUU(\Lieg)$,  we define the smooth function $h*X$ on $\tild G(\R)$ by
	\[ (h*X)(y) = \frac{\d}{\d t} \ h(y \exp {tX})\bigg|_{t=0}. \]
	We extend this action to smooth functions on $\tild G(\A)$ by ignoring the non-Archimedean component. 
	
	Define $\CCC(\tild G(\A), K)$ to be the space of smooth functions $h$ on $\tild G(\A)$ which are right $K$-invariant and which satisfy 
	\[ \norm{h*X}_{\Lone(\tild G(\A))} < \infty, \]
	for any $X \in \UUU(\Lieg)$. The topology induced from the seminorms $\norm{f*X}_{\Lone(\tild G(\A))}$ makes $\CCC(\tild G(\A), K)$ into a Frechet space (i.e., complete, metrizable and locally convex), see \cite{MR0225131}*{Chapter 10}. Sometimes we will abbreviate $\norm{h}_1$ for the $\Lone$-norm of $h$. The following lemma is proved in \cite{MR0225131}*{Proposition 7.7}. 
	\begin{lemma} 
		A linear form $J$ on a locally convex space $E$ is continuous if and only if there is a continuous seminorm $\mu$ on $E$ such that for every $f \in E$, 
		\[ J(f) \leq \mu(f). \]
	\end{lemma}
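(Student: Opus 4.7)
The plan is to prove each direction using only the standard characterization of the topology of a locally convex space (LCS) by a family of continuous seminorms, together with the homogeneity of $J$ and of seminorms.

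For the sufficiency direction I would argue directly: assuming $\mu$ is a continuous seminorm on $E$ with $|J(f)| \le \mu(f)$ for all $f \in E$ (the inequality stated must be read with absolute value, since otherwise it gives no control on $-J$), the set $U_\epsilon = \{f \in E : \mu(f) < \epsilon\}$ is an open neighborhood of $0$ on which $|J| < \epsilon$. Linearity then upgrades continuity at $0$ to continuity everywhere, so $J$ is continuous.

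For the necessity direction I would use that the locally convex topology of $E$ admits a defining family $\mathcal{F}$ of continuous seminorms, and that a base of neighborhoods of $0$ is given by finite intersections $V(p_1,\ldots,p_n;\epsilon) = \{f : p_i(f) < \epsilon \text{ for } i=1,\ldots,n\}$ with $p_i \in \mathcal{F}$ and $\epsilon > 0$. If $J$ is continuous then $J^{-1}(\{|z| < 1\})$ is an open neighborhood of $0$, hence it contains some $V(p_1,\ldots,p_n;\epsilon)$. I would then set
\[ \mu(f) = \epsilon^{-1} \bigl( p_1(f) + p_2(f) + \cdots + p_n(f) \bigr), \]
which is a continuous seminorm on $E$ as a finite positive-linear combination of continuous seminorms. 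The bound $|J(f)| \le \mu(f)$ follows by a standard homogeneity argument: if $\mu(f) > 0$, then for every $t > \mu(f)$ the element $f/t$ lies in $V(p_1,\ldots,p_n;\epsilon)$, giving $|J(f/t)| < 1$, hence $|J(f)| < t$, and letting $t \downarrow \mu(f)$ gives $|J(f)| \le \mu(f)$; while if $\mu(f) = 0$, then $f/t \in V(p_1,\ldots,p_n;\epsilon)$ for every $t > 0$, which forces $J(f) = 0$.

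There is no serious obstacle here; the lemma is purely a restatement of the definition of the topology of a LCS. The only mild subtlety is reading the stated inequality $J(f) \le \mu(f)$ as $|J(f)| \le \mu(f)$ (the non-absolute-value version is false as stated, since applying it to $-f$ and using $\mu(-f)=\mu(f)$ already forces the absolute value), and using homogeneity rather than a direct pointwise bound from the seminorm inequality defining the neighborhood.
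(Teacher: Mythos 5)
Your proof is correct, but note that the paper does not actually prove this lemma: it simply cites Tr\`eves, \emph{Topological Vector Spaces, Distributions and Kernels}, Proposition~7.7, and moves on. What you have written out is precisely the standard textbook argument that Tr\`eves gives (or a close variant): sufficiency from the openness of a sub-level set of $\mu$ plus translation invariance, and necessity by extracting a basic neighborhood $V(p_1,\ldots,p_n;\epsilon)$ from $J^{-1}(\{|z|<1\})$, forming $\mu=\epsilon^{-1}\sum p_i$, and using homogeneity to pass from the strict bound on the neighborhood to the closed bound $|J(f)|\le\mu(f)$ everywhere. Your observation that the displayed inequality must be read as $|J(f)|\le\mu(f)$ is also right and worth making explicit; over $\R$ the version without absolute value already forces $|J(f)|\le\mu(f)$ upon replacing $f$ by $-f$, and over $\C$ the inequality does not even parse without the modulus, so the intended reading is unambiguous. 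There is nothing to add or to criticize beyond the fact that the paper delegates the proof to a reference rather than giving one.
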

	
	Note that in \cites{FLM, FL16}, Finis, Lapid and M\"{u}ller prove the continuity of the usual (non-twisted) trace formula with the analogous space $\CCC(G(\A),K)$. Indeed, we have a correspondence between the two spaces: 
	\begin{lemma} \label{bijectionlemma}
		For $f \in \CCC(G(\A), K)$ and $\delta \in \tild G(\Q)$ define the function $(L_\delta f)(y) = f(\delta^{-1}y)$ on $\CCC(\tild G(\A),K)$ then this map is a bijection between the two spaces with inverse $L_{\delta^{-1}}$. Moreover, $\norm{f} = \norm{L_\delta f}$ and $L_\delta(f*X) = L_\delta(f)*X$ for any $X \in \UUU(\Lieg_\C)$. 
	\end{lemma}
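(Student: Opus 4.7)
The statement is essentially a bookkeeping lemma making precise the naive idea that ``left translation by $\delta$'' identifies the $G(\A)$-torsor $\tild G(\A)$ with $G(\A)$ itself. The plan is to interpret all expressions inside the ambient algebraic group $G\rtimes\langle\theta\rangle$, and then verify the required properties (well-definedness, $K$-invariance, the $L^1$ and smoothness conditions defining the space) one at a time. No serious obstacle is expected; the point is to check that the natural embedding $G,\tild G\hookrightarrow G\rtimes\langle\theta\rangle$ behaves well with respect to the seminorms that topologize $\CCC(G(\A),K)$ and $\CCC(\tild G(\A),K)$.

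First, I would make the meaning of the formula precise. Since $\tild G$ is a connected component of the reductive group $G\rtimes\langle\theta\rangle$ (Section~\ref{section:notation}), for $\delta\in\tild G(\Q)$ and $y\in\tild G(\A)$ the product $\delta^{-1}y$, computed in $(G\rtimes\langle\theta\rangle)(\A)$, lies in $G(\A)$. Hence $(L_\delta f)(y):=f(\delta^{-1}y)$ is an honest function on $\tild G(\A)$ whenever $f$ is a function on $G(\A)$. Symmetrically, for $h\in\CCC(\tild G(\A),K)$ and $x\in G(\A)$, the element $\delta x$ lies in $\tild G(\A)$, so setting $(L_{\delta^{-1}}h)(x):=h(\delta x)$ defines a function on $G(\A)$. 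Associativity in the ambient group gives $L_{\delta^{-1}}\circ L_\delta=\Id$ and $L_\delta\circ L_{\delta^{-1}}=\Id$, which supplies the bijection once we know the two maps preserve their respective spaces.

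Next I would verify that $L_\delta$ indeed lands in $\CCC(\tild G(\A),K)$. Right $K$-invariance is immediate: for $k\in K$, $(L_\delta f)(yk)=f(\delta^{-1}yk)=f(\delta^{-1}y)=(L_\delta f)(y)$, using that $K\subset G(\A_f)$ acts on $\tild G(\A)$ by right multiplication and that $f$ is right $K$-invariant. Smoothness in the archimedean variable transfers from $f$ because right multiplication in $\tild G(\R)$ by elements of a neighborhood of the identity in $G(\R)$ corresponds under $y\mapsto\delta^{-1}y$ to right multiplication in $G(\R)$. For $X\in\UUU(\Lieg_\C)$, associativity gives $\delta^{-1}(y\exp tX)=(\delta^{-1}y)\exp tX$, so
\[
(L_\delta f * X)(y)=\tfrac{d}{dt}\,f(\delta^{-1}y\exp tX)\big|_{t=0}=(f*X)(\delta^{-1}y)=\bigl(L_\delta(f*X)\bigr)(y),
\]
which is the displayed intertwining relation. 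By induction on the degree of $X$ it holds for all of $\UUU(\Lieg_\C)$.

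Finally, I would check the isometry $\norm{L_\delta f}_1=\norm{f}_1$. Using the measure on $\tild G(\A)$ fixed in \cref{section:notation} via $\int_{\tild G(\A)} h(y)\,dy=\int_{G(\A)} h(x\delta)\,dx$ (for the chosen $\delta\in\tild G(\Q)$), we compute
\[
\int_{\tild G(\A)}|L_\delta f(y)|\,dy=\int_{G(\A)}|f(\delta^{-1}x\delta)|\,dx=\int_{G(\A)}|f(\theta^{-1}(x))|\,dx=\int_{G(\A)}|f(x)|\,dx,
\]
where $\theta=\Ad(\delta)$ and the last equality uses that a $\Q$-algebraic automorphism of finite order preserves the Haar measure on the unimodular group $G(\A)$ (its modulus is a positive real of finite order, hence $1$). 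Applying the same identity to $f*X$ in place of $f$ and using the intertwining above yields $\norm{L_\delta f*X}_1=\norm{f*X}_1<\infty$, so $L_\delta f\in\CCC(\tild G(\A),K)$ and the map is a topological isomorphism. The inverse $L_{\delta^{-1}}$ is handled by the same verifications, completing the proof. The only conceptual point worth highlighting is the automorphism invariance of Haar measure used in the $L^1$ identity; everything else is formal from the ambient group structure.
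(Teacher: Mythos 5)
Your proof is correct and follows the same route as the paper's proof, which is essentially a two-sentence remark (``the bijection is obvious; the equality of $\Lone$ norms is a consequence of the definition of measure on $\tild G(\A)$''); you simply carry out the verifications the paper leaves implicit, including the intertwining $L_\delta(f*X)=(L_\delta f)*X$, and this is all sound.

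One small imprecision worth flagging: you justify the $\Lone$ identity by saying $\theta=\Ad(\delta)$ is ``a $\Q$-algebraic automorphism of finite order.'' For an arbitrary $\delta\in\tild G(\Q)$ this need not hold: if $\delta=g\delta_0$ with $g\in G(\Q)$, then $\Ad(\delta)=\operatorname{Inn}(g)\circ\theta_0$, which generically has infinite order in $\Aut(G)$ (it is of finite order only modulo inner automorphisms). The conclusion that the modulus is $1$ is still correct, but the clean argument is to factor: $\operatorname{Inn}(g)$ has modulus $1$ because $G(\A)$ is unimodular, and $\theta_0$ has modulus $1$ because it has finite order; hence their composition does too. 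Alternatively, for any $\Q$-rational algebraic automorphism $\theta$ one has $\det(d\theta_e)\in\Q^\times$, so the global modulus $\prod_v\lvert\det d\theta_e\rvert_v=\lvert\det d\theta_e\rvert_{\A}=1$ by the product formula, which avoids the finiteness hypothesis entirely.
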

	
	The bijection is obvious. The equality of $\Lone$ norms is a consequence of the definition of measure on the twisted space $\tild G(\A)$. 

\subsection{Reduction Theory}

	For $Q \in \PPP$, $T_1, T \in \aaa_0$, we define the Siegel set $\mathcal S^Q_{P_0}(T_1, T)$ consisting of $x = pak \in G(\A)$ such that $k \in \K, p \in \omega$, a fixed compact subset of $M_0(\A)^1 N_0(\A)$ and $a \in A_0$ satisfying 
	\[ \tau_{P_0}^Q(H_0(a)-T_1) = 1; \quad \hat{\tau}^Q_{P_0}(T - H_0(a))=1. \]
	Then we have the partition lemma of Langlands that for any $x \in G(\A)$ and $-T_1, T$ sufficiently regular, i.e., $d_0(T) \geq c, d_0(-T_1) \geq c_1$ for fixed positive constants $c, c_1$,
	\begin{equation} \label{partition}
		\displaystyle \sum_{\substack{Q : \\ P_0 \se Q \se P}} \sum_{\delta \in P(\Q) \bs Q(\Q)} F^Q_{P_0}(\delta x, T) \tau_Q^P(H_0(\delta x) - T) = 1,
	\end{equation}
	where $F^Q_{P_0}(\circ, T)$ is the characteristic function of the set $Q(\Q) \mathcal S^Q_{P_0}(T_1, T)$. In particular, for $P=G$ we have 
	\[ G(\Q)\ \mathcal S^G_{P_0}(T_1, T) = G(\A). \]	
	Throughout the paper we fix such $T_1 \in \aaa_0$. We remark that this version of the partition lemma is due to Arthur \cite{MR518111}*{Lemma 6.4} who attributes it to Langlands \cite{MR0579181}*{Lemma 2.5}. 

\subsection{The operator $\rho$}

	The usual right regular action $\rho$ of $G(\A)$ on $\Ltwo(X_G)$, which is given by
	\[ (\rho(g)\Phi)(x) = \Phi(x g) \]
	extends to a twisted representation $\tild \rho$ of $\tild G(\A)$: 
	\[ (\tild \rho(y , \omega)\Phi)(x) = (\omega \Phi)(\dot x*y) = (\omega \Phi)(\delta^{-1}xy). \]
	
	The representation of $G(\A)$ on $\Ltwo(X_G)$ decomposes into a discrete spectrum and a continuous spectrum: 
	 \[ \Ltwo(X_G) = \Ltwo_\text{disc}(X_G) \oplus \Ltwo_\text{cont}(X_G). \]
	 We shall denote by $\Pi_\text{disc}(\tild G, \omega)$ the equivalence classes of automorphic representations $\pi \in \Pi_\text{disc}(G)$ which extend to a twisted representation $\tild \pi$ of $\tild G(\A)$. They are precisely those satisfying $\pi \sim \pi \circ \theta$ \cite{LW}*{Lemme 2.3.2}. 
	
	Fix $P \in \PPP(M)$. The induced representation of $G(\A)$ on $\AAA(X_P)$ is given by 
	\[ (\rho_{P,\nu}(g)\Phi)(x) = \Phi(x g) \ \exp \sprod{\nu + \rho_P}{H_P(xg)-H_P(x)}. \]
	It is isomorphic to $\Ind_{P(\A)}^{G(\A)} \Ltwo_{\text{disc}}(X_M) \otimes \exp \sprod{\nu}{H_M(\circ)} $. 
	
	A compactly supported smooth $\K$-invariant function $h$ on $G(\A)$ defines an operator $\rho_{P,\nu}(h)$ on $\overline A(X_P)$ by
	\[ \rho_{P,\nu}(h)(\Phi) = \int_{X_P} h(x) \rho_{P,\nu}(x)(\Phi) \d x \] 
	whose image lies in the subspace of smooth $\K$-invariant functions. 

	We now define the twisted analog of $\rho$. Assume $P \in \PPP$ and $\delta \in \tild G(\Q)$. Denote by $Q$ the parabolic subgroup obtained by conjugation by $\delta$, i.e., $Q = \delta P \delta^{-1} = \theta(P)$ where $\theta = \Ad(\delta)$. Let $\sigma$ be an automorphic representation of $M$. 
	
	An element $y \in \tild G(\A)$ defines an operator for $\nu \in \aaa_{P,\C}^*$ ,
	\[ \tild \rho_{P,\sigma, \nu}(\delta,y,\omega):\overline{\AAA}(X_P,\sigma) \to \overline \AAA(X_Q, \sigma\circ\theta^{-1}) \]
	by
	\[ (\tild \rho_{P,\sigma, \nu}(\delta,y,\omega)\Phi)(x) = \exp \sprod{\theta(\nu+\rho_P)}{H_Q(x)} (\omega \Phi)(\delta^{-1}xy) \exp \sprod{\nu+\rho_P}{H_P(\delta^{-1}xy)}.  \]
	Likewise, by integrating against a smooth function $f \in \CCC(\tild G(\A), K)$, we define the operator 
	\[ \tild \rho_{P, \sigma, \nu}(\delta, f, \omega) = \int_{\tild G(\A)} f(y) \tild \rho_{P,\sigma, \nu}(\delta,y,\omega) \d y \]
	from the space $\AAA(X_P)$ to $\AAA(X_Q)$. Hopefully the notations of $\rho_{P, \nu}$ and $\tild \rho_{P, \sigma, \nu}$ for the induced representations of $G(\A)$ and $\tild G(\A)$ will not be confused with $\rho_P$, which is half of the sum of positive roots of $P$. \footnote{Our notation differs from \cite{LW} wherein $\tild \rho(f, \omega)$ stands for $\rho(\delta, f, \omega)$ for some $\delta \in \tild G(\Q)$. }

	
\section{The geometric side} \label{section:geom}

	\subsection{Statement of the geometric continuity}

		Assume that $f \in \CCC^\infty_c(\tild G(\A))$ and denote 
		\[ f^1(y) = \int_{A_G} f(yz) \d z \]
		where as usual, $A_G$ is the set of real points of the maximal $\Q$-split torus in the center of $G$ or equivalently, the kernel of the map $H_G : G(\A) \to \aaa_G$. 
		
		An element $\delta \in \tild G(\Q)$ has a Jordan decomposition 
		\[ \delta = s_\delta n_\delta = n_\delta s_\delta \]
		where $n_\delta$ is a unipotent element of $G(\Q)$ and $s_\delta \in \tild G(\Q)$ is quasi-semisimple in that the automorphism $\Ad(s_\delta)$ induced on the derived group $G_{\text{der}}$ is semisimple. Two elements of $\tild G(\Q)$ are called coarse-conjugate if their quasi-semisimple parts are conjugate (in $G(\Q)$). Denoting the set of equivalence classes by $\O$, the geometric side will be an expansion 
		\[ J^{\tild G, T}(f) = \sum_{\o \in \O} J^{\tild G, T}_{\o \in \O} (f) \] 
		which we shall define and extend to the class $\CCC(\tild G(\A), K)$. 
		
		Following \cite{CLL}*{Lecture 1, 2, 9} we define the ``basic identity" for $T \in\aaa_0$ with $d_0(T) > d_0$ as
		\[ k^T_\geom(x) = k^{\tild G, T}_\geom (x) = \sum_{\tild P \supseteq \tild P_0} (-1)^{\apg} \sum_{\xi \in P(\Q) \bs G(\Q)} \hat\tau_{\tild P}(H_0(\xi x)-T) k_{\tild P}(\xi x) \]
		where 
		\[k_{\tild P}(x) = \int_{N_P(\Q) \bs N_P(\A)} \sum_{\delta \in \tild P(\Q)} \omega(x) f^1(x^{-1}\delta n x) \d n. \]
		
		We can decompose $k^T_{\geom}(x)$ according to coarse conjugacy classes: 
		\[ k^T_{\geom}(x) = \sum_{\o \in \O} k^T_{\o}(x) \]
		where 
		\begin{align} \label{def_ko}
			 k^T_{\o(x)} & =  \sum_{\tild P \supseteq \tild P_0} (-1)^{\apg} \sum_{\xi \in P(\Q) \bs G(\Q)} \hat\tau_{\tild P}(H_0(\xi x)-T) k_{\tild P, \o}(\xi x), \\
		 k_{\tild P, \o}(x) & = \int_{N_P(\Q) \bs N_P(\A)} \sum_{\delta \in \tild P(\Q) \cap \o} \omega(x) f^1(x^{-1}\delta n x) \d n. \nonumber
		 \end{align}
		
		The last equality follows from a basic fact \cite{MR518111}*{p. 923} that 
		\[ \tild P(\Q) \cap \o = (\tild M_P(\Q) \cap \o) N_P(\Q). \]
		We also set 
		\[ k_\geom(x) = \sum_{\gamma \in \tild G(\Q)} \omega(x) f^1(x^{-1} \gamma x);\quad  k_{\o}(x) = \sum_{\gamma \in \o} \omega(x) f^1(x^{-1} \gamma x). \]
		
		Following \cite{CLL}, Labesse and Waldspurger \cite{LW} show that in the expression 
		\[ J^T(f) = \sum_{\o \in \O} \int_{X_G} k^T_{\o}(x) \d x, \]
		only finitely many coarse conjugacy classes $\o \in \O$ give a nonzero contribution depending on the support of $f$ and the integral is absolutely convergent for all $T \in \aaa_0$ with $d_0(T)$ large enough. By the partition lemma of Langlands and Arthur (see \cref{partition}), we obtain
		\begin{equation} \label{k_o}
			k^T_{\o}(x) = \sum_{\substack{\tild P, Q : \\ P_0 \se Q \se P}} \sum_{\xi \in Q(\Q) \bs P(\Q)} (-1)^{\apg} F^Q_{P_0}(\xi x, T) \tau^P_Q(H_0(\xi x)-T) \hat\tau_{\tild P}(H_0(\xi x)-T) k_{\tild P, \o}(\xi x)
		\end{equation}
		By a combinatorial identity of Langlands \cite{LW}*{Lemme 2.11.5}, we have 
		\[ \sum_{R \supseteq P} \tild \sigma^R_P = \tau^P_Q \hat\tau_{\tild P}. \]
		So making a change of variables, we can write
		\[ k^T_{\o}(x) = \sum_{Q \se R} \sum_{\xi \in Q(\Q) \bs G(\Q)} F^Q_{P_0}(\xi x, T) \tild \sigma^R_Q(H_0(\xi x)-T) k_{\o, Q, R}(\xi x) \]
		where 
		\[ k_{\o, Q, R}(x) = \sum_{\substack{\tild P \supseteq \tild P_0:} \\ Q \se P \se R} (-1)^{\apg} k_{\tild P, \o}(x). \]
		
		The twisted version of Finis-Lapid's extension of the geometric side is as follows. 

		\begin{theorem} \label{geom}
			Assume the Root Cone lemma (\cref{RCL}) holds for the pair $(G, \theta)$. 
			\begin{enumerate}
				\item For any $f \in \CCC(\tild G(\A), K), \o \in \O$ and any $T \in \aaa_0$ suitably large multiple of the sum of positive coroots (see \cref{TCone}), the integrals 
				\[ J^T_{\geom}(f) = \int_{X_G} k^T_{\geom}(x) \d x \text{ and } \quad J^T_{\o}(f) = \int_{X_G} k^T_\O(x) \d x \]
				are absolutely convergent. 
				\item \label{part2} $J^T_{\geom}(f)$ and $J^T_{\o}(f)$ are polynomials in $T$ of degree $\leq a_{\tild P_0}-a_{\tild G}$ whose coefficients are continuous linear forms in $f$. 
				\item \label{part3} There is $r\geq 0$ and a continuous seminorm $\mu$ on $\CCC(\tild G(\A), K)$ such that 
				\begin{equation}\label{bound}
					\begin{split} \sum_{\o \in \O} \displaystyle\lmod{ \int_{X_G^T} k_{\o}(x) - J^T_{\o}(f)} \d x \leq \sum_{\o \in \O} \int_{X_G} \lmod{ F^G(x,T)k_{\o}(x) - k^T_{\o}(x) } \d x \\ \leq \mu(f) (1+\norm{T})^r \exp{(-d_0(T))}, \end{split}
				\end{equation}
				for any $f \in \CCC(\tild G(\A), K)$ and any $T \in \aaa_0$ ``suitably large'' .
				\item $J^T_{\geom}(f) = \sum_{\o \in \O} J^T_{\o}(f)$. 
			\end{enumerate}
			In addition, the upper bound on the seminorm $\mu$ and the coefficients in \cref{bound} of part (\ref{part3}) depends on the level of $K$ in the same way as in the non-twisted case \cite{FL16}*{Theorem 5.1}. 
		\end{theorem}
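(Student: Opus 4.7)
The plan is to follow the template of Finis--Lapid \cite{FL11a, FL16}, localising the twisted modifications to a single geometric step in which the Root Cone Lemma \cref{RCL} takes the place of \cite{FL11a}*{Lemma 2.2}. The core content is the quantitative truncation bound \cref{bound} of part (\ref{part3}); once that is in hand, parts (1), (\ref{part2}) and (4) follow by formal arguments, since the exponential decay in $d_0(T)$ lets one identify $J^T_{\o}(f)$ with a polynomial in $T$, sum over $\o$, and conclude absolute convergence.

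To prove \cref{bound}, I would begin from the expression for $F^G(x,T)k_{\o}(x)-k^T_{\o}(x)$ obtained by combining the partition identity \cref{partition} with the expansion \cref{k_o} of $k^T_{\o}$. Applying Langlands' combinatorial identity $\tau^P_Q \hat\tau_{\tild P}=\sum_{R\supseteq P}\tild\sigma^R_P$ rewrites the integrand as a sum over pairs $Q\subseteq R$ (with $R\neq G$ in the non-cancelling terms) of quantities of the form
\[ \sum_{\xi \in Q(\Q)\bs G(\Q)} F^Q_{P_0}(\xi x,T)\, \tild\sigma^R_Q(H_0(\xi x)-T)\, k_{\o, Q, R}(\xi x). \]
After unfolding $X_G = A_G G(\Q)\bs G(\A)$ onto a Siegel set $\mathcal S^G_{P_0}(T_1,T)$ and summing over $\o$, the estimation is reduced to bounding integrals of the twisted inner kernels $k_{\o, Q, R}$ over twisted Bruhat cells indexed by elements $w\in \tild W$, in parallel with the untwisted treatment of \cite{FL11a, FL16}.

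The analytic input is the slow decay of intertwining operators, which together with the Sobolev-type seminorms $\norm{f*X}_{\Lone(\tild G(\A))}$ on $\CCC(\tild G(\A),K)$ produces a bound of the form $\mu(f)(1+\norm{T})^r\exp(-d_0(T))$ for some continuous seminorm $\mu$. The main obstacle, and the reason the untwisted proof does not transcribe verbatim, is geometric: \cite{FL11a}*{Lemma 2.2} expresses a certain sum of roots as a positive linear combination of elements of $\Delta_0$, a property which breaks in the twisted setting because orbits under $\theta_0$ must be averaged to produce elements of $\Delta_{\tild P}$. This is precisely the deficiency remedied by \cref{RCL}, assumed here to hold for $(G,\theta)$; substituting it wherever \cite{FL11a}*{Lemma 2.2} is invoked, and tracking the dependence on the level of $K$ as in \cite{FL16}*{Theorem 5.1}, yields \cref{bound} with the stated uniformity.

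The remaining parts are then formal consequences. Absolute convergence (part (1)) is immediate from \cref{bound}, together with the fact, due to Labesse--Waldspurger, that only finitely many $\o\in\O$ contribute when $f$ is compactly supported, extended by continuity to $\CCC(\tild G(\A),K)$; part (4) follows by additivity of the integral over $\o$. For polynomiality in $T$ (part (\ref{part2})), I would combine \cref{bound} with Arthur's argument \cite{MR518111} that for $f\in \CCC^\infty_c(\tild G(\A))$ the distributions $J^T_{\geom}(f)$ and $J^T_{\o}(f)$ are polynomials in $T$ of degree at most $a_{\tild P_0}-a_{\tild G}$; the exponentially small error in \cref{bound}, valid for $T$ along the line of \cref{TCone}, then forces $J^T_{\o}(f)$ to agree with such a polynomial, whose coefficients are continuous linear forms in $f\in\CCC(\tild G(\A),K)$ because $\mu$ is.
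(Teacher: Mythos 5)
Your overall plan matches the paper's: reduce everything to the quantitative truncation bound in part~(3), follow Finis--Lapid's template for that bound, and localise the twisted modification to the place where \cite{FL11a}*{Lemma~2.2} would be invoked, substituting the Root Cone Lemma~(\cref{RCL}) there. The reduction to twisted Bruhat cells, the use of the Mellin transform and slow decay of intertwining operators, and the tracking of the $K$-dependence are all correctly identified; the paper packages these steps as \cref{3.1tild} and \cref{3.2tild} but the content is what you describe.

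There is, however, a genuine gap in your handling of part~(1). The estimate \cref{bound} controls only the difference
\[
\sum_{\o\in\O}\int_{X_G}\lmod{F^G(x,T)k_{\o}(x)-k^T_{\o}(x)}\,\d x,
\]
not the main term $\sum_{\o}\int_{X_G}\lmod{k^T_{\o}(x)}\,\d x$ itself; you still need an absolute bound on $\int_{X_G}F^G(x,T)\sum_{\o}\lmod{k_{\o}(x)}\,\d x$ to close the triangle inequality. This is supplied by the $Q=G$ case of \cref{3.1tild} (recorded in \cref{k_oestimate}) and cannot be replaced by the appeal to ``only finitely many $\o$ contribute when $f$ is compactly supported, extended by continuity.'' That finiteness is a qualitative feature of compactly supported test functions, not a norm estimate, and does not pass to $\CCC(\tild G(\A),K)$ by density; absolute convergence must come from a uniform bound valid for all $f$ in the space. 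You should also make explicit the preliminary step (used in the paper) that the sums defining $k_{\tild P}(x)$ and $k^T_{\geom}(x)$ are even well-defined for non-compactly-supported $f$, which relies on \cref{bijectionlemma} together with the domination lemma \cite{FL16}*{Lemma~2.1(1)} and \cite{MR518111}*{Lemma~6.4}. With these two points repaired, the rest of your argument (part~(2) via the polynomiality result, here best cited as \cite{LW}*{Th\'eor\`eme~11.1.1}; part~(4) by summation over $\o$) is sound.
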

		
		We defer the proof of this theorem to \cref{maingeomproof}.
				
\subsection{A few technical results}

	In this section, we review some definitions and lemmas that will go into the proof of the geometric side. 
	
	\subsubsection{The modulus character}
	For $w \in W$ let $\delta_w$ denote the modulus function of $M_0(\A)$ on $N_w(\A) \bs N_0(\A)$ where $N_w = N_0 \cap w N_0 w^{-1}$. In particular, if $w = w_0$ is the long element in $W$, we denote $\delta_{w_0}$ by $\delta_0$. Denote also by $\delta_{M_0,N}$ the modulus function of $M_0(\A)$ on the unipotent radical $N$ of any parabolic subgroup of $G$. The following lemma is easy to prove, cf. \cite{Shahidi}*{\S 4.1}. 
	\begin{lemma}\label{modulus0}
		\begin{enumerate}
			\item $\delta_w = \displaystyle \frac{1}{2} \sum_{\substack{\alpha > 0, \\ w^{-1}  \alpha > 0}} \alpha, \ \delta_{M_0, N_w} = \frac{1}{2} \sum_{\substack{ \alpha > 0 \\ w^{-1} \alpha < 0} }\alpha.$
			\item $ \delta_0 = \delta_w . \delta_{M_0, N_w}.$
		\end{enumerate}
	\end{lemma}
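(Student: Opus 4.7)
The plan is a direct root-space computation. Both assertions reduce to partitioning the positive roots of $(G, T_0)$ according to the sign of $w^{-1}\alpha$ and reading off $M_0$-weights on various $T_0$-stable unipotent Lie subalgebras.

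First I would pin down the root-space decompositions. Since $\Lie(N_0) = \bigoplus_{\alpha > 0}\Lieg_\alpha$ and $\Ad(n_w)\Lieg_\alpha = \Lieg_{w\alpha}$ for any representative $n_w$ of $w$, the Lie algebra of $wN_0 w^{-1}$ is $\bigoplus_{\beta:\ w^{-1}\beta > 0}\Lieg_\beta$. Intersecting this with $\Lie(N_0)$ yields
\[ \Lie(N_w) \;=\; \bigoplus_{\substack{\alpha > 0 \\ w^{-1}\alpha > 0}} \Lieg_\alpha, \]
while the $M_0$-stable complementary subspace inside $\Lie(N_0)$ is $\bigoplus_{\alpha>0,\,w^{-1}\alpha<0}\Lieg_\alpha$, which is naturally identified with the tangent space at the identity to the quotient $N_w \bs N_0$. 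For any $M_0$-stable unipotent subspace on which $T_0$ acts through a set of roots $\Phi_U$ (each root space being one-dimensional in our setting), the modulus character of $M_0(\A)$ is $\prod_{\alpha \in \Phi_U}\lmod{\alpha(\,\cdot\,)}$, so in the additive/half-sum convention used throughout the paper (compatible with $\rho_0 = \tfrac12 \sum_{\alpha>0}\alpha$) the two formulas in part~(1) fall out by simply reading off which roots occur in each summand.

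For part~(2), I would just observe that the sets $\{\alpha>0 : w^{-1}\alpha > 0\}$ and $\{\alpha>0 : w^{-1}\alpha < 0\}$ manifestly partition the positive roots of $(G,T_0)$, since no positive $\alpha$ can be sent to $0$ by an element of the Weyl group. Adding the two identities in~(1) then gives
\[ \delta_w + \delta_{M_0,N_w} \;=\; \tfrac{1}{2}\sum_{\alpha > 0}\alpha, \]
which is precisely $\delta_0$ in the lemma's convention (the half-sum of all positive roots, equivalently the modulus of $M_0$ acting on the full unipotent radical $N_0$).

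There is no real obstacle beyond notational bookkeeping; the only foundational input is the standard fact that $N_w$, being the intersection of two $T_0$-stable unipotent subgroups of $G$, is itself a smooth $T_0$-stable unipotent subgroup whose Lie algebra decomposes as the sum of the root spaces it contains. I would cite for this either the passage in Shahidi's book quoted in the statement of the lemma or the corresponding discussion in Borel's \emph{Linear Algebraic Groups}, and then the argument is a one-line root-combinatorics check.
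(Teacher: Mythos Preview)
Your proposal is correct and takes essentially the same approach as the paper, which does not give a proof at all but simply declares the lemma ``easy to prove'' and cites \cite{Shahidi}*{\S 4.1} for the underlying root-space computation. Your decomposition of $\Lie(N_0)$ into the $T_0$-stable pieces indexed by $\{\alpha>0 : w^{-1}\alpha>0\}$ and $\{\alpha>0 : w^{-1}\alpha<0\}$, followed by reading off the modulus characters and summing, is exactly the standard argument one finds in that reference.
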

		
	\begin{lemma}\label{modulus}
		\[ \delta_0(a^{-1} n_w b n_w^{-1}) \delta_{w^{-1}}(a^2) = \delta_{w^{-1}}(ab^{-1}) \delta_{M_0,N_{w^{-1}}}(a^{-1}b), \]
		for any representative $n_w$ of $w \in W$ and any $a, b \in A_0$. 
	\end{lemma}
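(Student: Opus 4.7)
Since $n_w$ normalizes the split torus $A_0$, the element $n_w b n_w^{-1}$ again lies in $A_0$ (it is just the Weyl conjugate of $b$), so every factor appearing in the asserted identity is a character of $A_0$ evaluated at $a$ or $b$. My plan is to pass to additive (logarithmic) notation and check equality of the two sides as linear combinations of positive roots.

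Unfolding the RHS by multiplicativity of characters gives, in log,
\[
\delta_{w^{-1}}(a) - \delta_{w^{-1}}(b) - \delta_{M_0,N_{w^{-1}}}(a) + \delta_{M_0,N_{w^{-1}}}(b),
\]
while the factor $\delta_{w^{-1}}(a^2)$ on the LHS contributes simply $2\delta_{w^{-1}}(a)$. The only non-routine step is to evaluate $\delta_0(a^{-1} n_w b n_w^{-1}) = -\rho(a) + \rho(n_w b n_w^{-1})$. Using the standard identity $\alpha(n_w b n_w^{-1}) = (w^{-1}\alpha)(b)$ for the Weyl action on characters, this becomes $-\rho(a) + (w^{-1}\rho)(b)$.

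Next I compute $w^{-1}\rho$ by the change of variable $\beta = w^{-1}\alpha$ in $\sum_{\alpha > 0} w^{-1}\alpha$, splitting the resulting sum according to the sign of $w\beta$. This yields $w^{-1}\rho = \rho - 2\eta$, where $\eta = \frac{1}{2}\sum_{\alpha > 0,\,w\alpha < 0}\alpha$. By \cref{modulus0} the character $\eta$ is exactly one of the two summands in the decomposition $\rho = \delta_{w^{-1}} + \delta_{M_0,N_{w^{-1}}}$. Substituting $w^{-1}\rho$ back into the LHS and rewriting $\rho(a)$ via the same decomposition, the $a$-terms and $b$-terms separate, and after combining with the $2\delta_{w^{-1}}(a)$ contribution the four coefficients match the RHS expression displayed above, term by term.

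The entire argument is routine bookkeeping with characters; no deeper geometric input is needed. The main pitfall is keeping the convention for the Weyl action on characters consistent with the identification of $\eta$ with the correct member of the pair $\{\delta_{w^{-1}}, \delta_{M_0,N_{w^{-1}}}\}$ from \cref{modulus0}, since otherwise a sign error propagates through the $b$-coefficients. Once this is arranged, both sides coincide as the same $\Z$-linear combination of positive roots evaluated at $a$ and $b$, which is what the lemma asserts.
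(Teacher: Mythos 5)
Your approach is correct and amounts to the same underlying computation as the paper's proof, organized slightly differently. The paper decomposes $\delta_0 = \delta_{w^{-1}}\,\delta_{M_0,N_{w^{-1}}}$ once to handle the factor $\delta_0(a^{-1})\delta_{w^{-1}}(a^2)$, and then separately decomposes $\delta_0 = \delta_w\,\delta_{M_0,N_w}$ to handle $\delta_0(n_w b n_w^{-1})$, conjugating each of the two summands by $w$ by the same change of variable you describe (sum over $\alpha$ with $w\alpha>0$ versus $w\alpha<0$). You instead conjugate $\rho_0$ directly, arriving at $w^{-1}\rho_0 = \rho_0 - 2\eta$, and then reassemble. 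The two routes are computationally identical; the paper's version just packages the $b$-factor so that the result comes out as a product of the two modulus characters without having to write $\rho - 2\eta$ explicitly. Your warning about the sign pitfall is well taken: the identity only closes if $\eta$ is identified with $\delta_{w^{-1}}$ (the modulus on $N_{w^{-1}}\backslash N_0$, supported on $\{\alpha > 0: w\alpha<0\}$), not with $\delta_{M_0,N_{w^{-1}}}$ — and the formulas as displayed in \cref{modulus0} appear to have the two half-sums swapped relative to the textual definitions (one can see this already in $SL_2$, where the text's $\delta_0 = \delta_{w_0}$ should equal $\rho_0$, but the printed formula for $\delta_{w_0}$ gives $0$). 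So your "pitfall" sentence is doing real work; a reader following the printed statement of \cref{modulus0} literally would indeed land on the wrong summand and get the $b$-coefficients with the wrong sign. Making explicit that $\eta = \delta_{w^{-1}}$ (after fixing the convention) would turn your outline into a complete proof.
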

		
	\begin{proof}
		Using the above \cref{modulus0}, we have 
		\begin{flalign*}
			\delta_0(a^{-1}) \delta_{w^{-1}}(a^2) & = \delta_{w^{-1}}(a^{-1}) \delta_{M_0,N_{w^{-1}}}(a^{-1}) \delta_{w^{-1}}(a^2) \\
				& = \delta_{w^{-1}}(a) \delta_{M_0,N_{w^{-1}}}(a^{-1})
		\end{flalign*}
		Following the proof of lemma 2.1 in \cite{FL11a} we can write
		\begin{flalign*}
			\delta_0(n_w b n_w^{-1}) & = \delta_{M_0,N_{w^{-1}}}(b) \delta_w(n_w b n_w^{-1}) \\					& = \delta_{M_0,N_{w^{-1}}}(b) \delta_{w^{-1}}(b^{-1}). 
		\end{flalign*}
		Multiplying the two gives the desired equality. 
	\end{proof}

	\subsubsection{Twisted Bruhat decomposition} \label{twistedBruhat}
		
		The Bruhat decomposition in the twisted case is similar to the usual case. Since the minimal parabolic $P_0$ is chosen to be $\theta_0$-stable, any element $\tild w = w \delta_0$ of the Weyl set $\tild W = W^{\tild G}$ gives a twisted Bruhat cell 
		\[ C(\tild w) = P_0 \tild w P_0 = P_0 (w \delta_0) P_0 = (P_0 w P_0) \delta_0;\]
		and $\tild G$ is the union of such cells $C(\tild w)$.
		 
		Note that subsets of $\Delta_{P_0}$ are not always in bijection with standard parabolic subsets but one needs to consider $\theta_0$-stable subsets of $\Delta_{P_0}$. For $Q \in \PPP$ there exist $Q^-, Q^+ \in \PPP$ such that $Q^- \se Q \se Q^+$ and $\Delta_0^{Q^-}, \Delta_0^{Q^+}$ are $\theta$-stable (see \cref{section:notation}). The twisted parabolic subgroups 
		\[ \tild Q^- := Q^- \delta, \quad \tild Q^+ := Q^+ \delta\] 
		are therefore well-defined for any choice $\delta \in \tild G(\Q)$ where $\theta = \Ad(\delta)$. Indeed, standard $\theta_0$-stable parabolic subsets are the `right' parabolic subsets one needs to consider to get the alternating sum in the kernel of the twisted trace formula. Following the notation of \cite{LW}*{p. 133}, for $Q \in \PPP$ define 
	\[ \tild G(Q, G) := \tild G(\Q) \bs \bigcup_{\tild Q^+ \se \tild P' \subsetneq \tild G} \tild P'(\Q). \]
		Being a bi-$Q(\Q)$-invariant set $\tild G(Q, G)$ is a finite disjoint union of twisted Bruhat cells $C(\tild w)$ over $\tild w \in \tild W(Q, G)$ or equivalently, over $w \in W$ satisfying $Q.Q(w) = G$, where $Q(w)$ is the smallest standard parabolic subgroup containing $w$. 
	
	\subsubsection{Mellin transform} \label{mellin}
	For a function $F \in \CCC^\infty_c(G(\A))$, we recall the definition of Mellin transform on $A_0$ and inversion formula: 
	\[	\phi(\lambda)(g) = \int_{A_0} F(ag) a^{\lambda + \rho_0} \d a \]
	The function can be recovered by the inverse-Mellin transform
	\[	F(ag) = \int_{\Re \lambda = \lambda_0} \phi(\lambda)(g) \delta_0(a)^{\frac{1}{2}} a^\lambda \d \lambda,\]
	where $\lambda_0 \in \aaa_0^*$ and for convenience, we have denoted $\exp{(\sprod{\lambda}{H_0(a)})}$ by $a^\lambda$. 	

	\subsubsection{Intertwining operators} \label{easyintertwining}
	
	We briefly recall the properties of intertwining operators; following \cite{FL16}, we will need it for principal series representations only. Later in the analysis of the spectral side we will define them more generally for any two associated parabolic subgroups. The space of representations parabolically induced from $P_0(\A)$ is defined by
	\[ I(\lambda) = \{ \varphi : G(\A) \to \C \text{ smooth } |\ \phi(pg) = \exp \sprod{\lambda + \rho_0}{H_0(p)}\phi(g) \; \forall p \in P_0(\A), g \in G(\A). \} \]
	The intertwining operator is a map 
	\[ M(w, \lambda) : I(\lambda) \to I(w \lambda) \]
	given by 
	\[ M(w, \lambda)\phi(g) = \int_{N_w(\A) \setminus N_0(\A)} \phi(n_w^{-1} n g) \d n. \]
	It is well-known that the integral over $\lambda$ is a product of local integrals and converges for $\lambda$ in the positive Weyl chamber ``sufficiently away from the origin". It extends meromorphically to $\aaa_{0, \C}^*$ with simple poles occurring on the root hyperplanes \cite{MW}*{IV.1}. Moreover following the notations in \cite{FL16}*{\S 3.3},
	\[ M(w, \lambda) \phi = m(w, \lambda) \phi \]
	where 
	\[ m(w, \lambda) = \prod_{\substack{\alpha \in \RRR_0 : \\ w^{-1} \alpha < 0} } m_\alpha(\sprod{\lambda}{\alpha^\vee}) \]
	and if $\lambda_0$ is in the positive Weyl chamber of $\aaa_0^*$ satisfying $\sprod{\lambda_0 - \rho_0}{\varpi^\vee} > 0$ for every $\varpi^\vee \in \hat\Delta_0^\vee$, then the function 
\begin{equation} \label{moderategrowth}
	\lambda \mapsto \prod_{\substack{ \alpha \in \Delta_0 :\\ w^{-1} \alpha < 0} } \sprod{\lambda}{\alpha^\vee} m(w, \lambda)
\end{equation}
	is holomorphic and of moderate growth on $\norm{\Re \lambda - \lambda_0} < \epsilon$ for some $\epsilon > 0$ sufficiently small. See \cite{MW}*{IV.1.11} and \cite{HC}*{Lemma 101} for details. 

	\subsection{Root Cone Lemma}	
	
	The Root Cone \cref{RCL} will be used to prove the finiteness of derivatives of $\phi_{T, Q, \l}(\lambda)$ in \cref{lemma46}. We will prove this for various pairs $(G, \theta_0)$ in \cref{section:RCL} including all cases when $G$ is split over $\Q$ (see \cref{rem:defQ}). Note that this lemma depends only on the semisimple part of $G$. The continuity of the geometric side for groups $G$ which are quasisplit but not split is conditional on proving this lemma which we assume to hold in this section. 
	
	Denote by $\Delta_0^{Q(w)}$ the subset of $\Delta_0$ corresponding to the smallest standard parabolic subgroup $Q(w)$ containing a representative of $w \in W$. Set
	\[ \gamma(w, \theta_0) := \displaystyle\frac{1}{2}(1 - \theta_0^{-1})
								\left(\sum_{\substack{\alpha: \\ \alpha, w\alpha > 0}} \alpha
								- \sum_{\substack{\beta : \beta > 0 \\ w\beta < 0}} \beta, \right) \]
	and for $\gamma \in \aaa_0^*$, 
	\[ \eta(\lambda, w, \theta_0, \gamma) := \lambda - \theta_0^{-1}w^{-1}\lambda - \gamma. \]
	Thus $\eta(\lambda, w, \theta_0, 0)$ denotes the vector $\lambda - \theta_0^{-1}w^{-1}\lambda$. Moreover, when $\gamma = \gamma(w, \theta_0)$ we will denote
	\begin{equation} \label{eta}
		\eta(\lambda, w, \theta_0) := \lambda - \theta_0^{-1}w^{-1}\lambda - \gamma(w, \theta_0).
	\end{equation}
	
	\begin{lemma} \label{sumcoeffs}
		\begin{enumerate}
			\item If $\lambda$ is any vector in $\aaa_0^*$ then 
				\[ \sum_{\varpi^\vee \in \hat\Delta_0^\vee} \sprod{(1-\theta_0^{-1}) \lambda}{\varpi^\vee} = 0.\]
			\item Suppose $w \in W, w \neq 1$ and $\lambda$ is in the (open) positive Weyl chamber of $\aaa_0^*$ then 
			 	\[ \sum_{\beta^\vee \in \Delta_0^\vee} \sprod{\lambda - \theta_0^{-1} w^{-1} \lambda}{\varpi_\beta^\vee} > 0.\]
		\end{enumerate}
	\end{lemma}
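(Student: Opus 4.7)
The plan is direct, exploiting the compatibility of $\theta_0$ with the natural pairing between $\aaa_0^*$ and $\aaa_0$. Since $\theta_0$ preserves $(P_0, M_0)$ and induces an automorphism of the based root datum, it permutes the set $\Delta_0$ of simple roots and, dually, the set $\hat\Delta_0^\vee$ of fundamental coweights. The key compatibility is
\[
\sprod{\theta_0^{-1}\lambda}{\varpi^\vee} = \sprod{\lambda}{\theta_0\varpi^\vee},
\]
which expresses the fact that $\theta_0$ preserves the character--cocharacter pairing on $T_0$.

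For part (1), set $\rho^\vee := \sum_{\varpi^\vee \in \hat\Delta_0^\vee}\varpi^\vee$; this vector is $\theta_0$-invariant because $\theta_0$ permutes the summands. Then
\[
\sum_{\varpi^\vee \in \hat\Delta_0^\vee}\sprod{(1-\theta_0^{-1})\lambda}{\varpi^\vee} \;=\; \sprod{\lambda}{\rho^\vee} - \sprod{\lambda}{\theta_0\rho^\vee} \;=\; 0.
\]

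For part (2), apply (1) with $\lambda$ replaced by $w^{-1}\lambda \in \aaa_0^*$ to reduce the desired inequality to
\[
\sum_{\varpi^\vee \in \hat\Delta_0^\vee}\sprod{\lambda - w^{-1}\lambda}{\varpi^\vee} > 0.
\]
Now invoke the classical Weyl-group fact that for $\lambda$ dominant and any $w \in W$, the difference $\lambda - w^{-1}\lambda$ lies in the closed cone spanned by the simple roots; writing $\lambda - w^{-1}\lambda = \sum_{i} c_i \alpha_i$ we have $c_i \geq 0$ for all $i$. Using $\sprod{\alpha_i}{\varpi_j^\vee}=\delta_{ij}$, the sum in question equals $\sum_i c_i \geq 0$. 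Strict positivity then follows from strict dominance of $\lambda$: the stabilizer of an element of the open positive Weyl chamber in $W$ is trivial, so $w \neq 1$ forces $w^{-1}\lambda \neq \lambda$, whence at least one $c_i$ is strictly positive.

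No substantive obstacle arises; the argument is routine bookkeeping with the $\theta_0$-action on $\Delta_0$ combined with the classical dominance inequality $w^{-1}\lambda \leq \lambda$ in the root-lattice partial order on $\aaa_0^*$.
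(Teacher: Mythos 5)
Your proof is correct and follows essentially the same route as the paper: both establish part (1) from the fact that $\theta_0$ permutes $\hat\Delta_0^\vee$, and both prove part (2) by splitting $\lambda - \theta_0^{-1}w^{-1}\lambda$ into $(\lambda - w^{-1}\lambda) + (1-\theta_0^{-1})(w^{-1}\lambda)$, killing the second summand via part (1), and invoking the classical Bourbaki fact that $\lambda - w^{-1}\lambda$ lies in the nonnegative root cone together with strict dominance of $\lambda$ and $w\neq 1$ for strict positivity. You spell out a few details (the adjointness $\sprod{\theta_0^{-1}\lambda}{\varpi^\vee}=\sprod{\lambda}{\theta_0\varpi^\vee}$ and the trivial-stabilizer argument) that the paper leaves implicit, but the structure and key inputs are identical.
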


	\begin{proof}
		The first statement follows because $\theta_0^{-1}$ is a permutation on the set $\Delta_0$ or equivalently on $\hat\Delta_0^\vee$. The two parts of the lemma estimate the sum of coefficients of respective vectors expressed in the basis $\{ \beta \in \Delta_0\}$ of roots. We write $\lambda - \theta_0^{-1} w^{-1} \lambda$ as a sum of $\lambda - w^{-1} \lambda$ and $(1-\theta_0^{-1})(w^{-1} \lambda)$. If $w \neq 1$ then $\Delta_0^{Q(w)}$ is nonempty so by~\cite{MR1890629}*{Ch. VI \S 1.6 Proposition 18} and the choice of $\lambda$, the inner product $\sum_{\beta^\vee \in \Delta_0^\vee} \sprod{\lambda - w^{-1} \lambda}{\varpi_\beta^\vee}$ is positive. The other inner product sum $\sum_{\beta^\vee \in \Delta_0^\vee} \sprod{(1-\theta_0^{-1})(w^{-1} \lambda)}{\varpi_\beta^\vee}$ vanishes using Part 1. 
	\end{proof}

	\begin{lemma}[Root Cone lemma] \label{RCL}
		For $w \in W, w \neq 1$, there exists an open cone $\Omega_0$ inside the positive Weyl chamber $(\aaa_0^*)^+$ in $\aaa_0^*$ such that for every $\lambda \in \Omega_0$ and every $\beta \in \Delta_0^{Q(w)}$, 
		
		\begin{equation} \label{RCLinequality}
			\sprod{\lambda - \theta_0^{-1}w^{-1}\lambda}{\varpi_\beta^\vee} > 0. 
		\end{equation}
	\end{lemma}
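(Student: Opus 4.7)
The plan is to reduce the statement to the case of simple groups and then verify it by a case-by-case analysis in the Cartan-Killing classification. Since \cref{RCLinequality} only involves the root system $\RRR_0$ (with its simple roots $\Delta_0$, fundamental coweights $\varpi_\beta^\vee$) and the diagram automorphism $\theta_0$, it depends only on the semisimple part of $G$. Moreover $\theta_0$ permutes the irreducible components of $\RRR_0$, and averaging within a $\theta_0$-orbit of simple factors reduces the problem to the case where $G$ is simple and $\theta_0$ acts by a single Dynkin diagram automorphism. I would also rewrite \cref{RCLinequality} in the equivalent form $\sprod{\lambda}{\varpi_\beta^\vee - w\theta_0\varpi_\beta^\vee} > 0$, so the goal becomes to show that the vectors $\varpi_\beta^\vee - w\theta_0\varpi_\beta^\vee$ for $\beta\in\Delta_0^{Q(w)}$ all lie in the strict dual positive Weyl chamber on a common nonempty open cone $\Omega_0 \subseteq (\aaa_0^*)^+$. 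Note that \cref{sumcoeffs}(2) already establishes that the \emph{sum} of these inner products over $\beta\in\Delta_0$ is strictly positive on all of $(\aaa_0^*)^+$; the difficulty of the lemma is the strict positivity of each individual term indexed by $\beta$ in the support of $w$.

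Next, I would dispatch the types of simple Dynkin diagrams one by one. For pairs $(G,\theta_0)$ with trivial diagram automorphism (types $B_n,C_n,E_7,E_8,F_4,G_2$, and also types $A_{n-1},D_n,E_6$ when one takes $\theta_0=1$), the inequality reduces to $\sprod{\lambda}{\varpi_\beta^\vee - w\varpi_\beta^\vee}>0$; for $\beta\in\Delta_0^{Q(w)}$ the coweight difference $\varpi_\beta^\vee - w\varpi_\beta^\vee$ is a nonzero, nonnegative combination of simple coroots by Bourbaki, Ch.~VI, \S1.6, Proposition 18, so it pairs strictly positively with any $\lambda$ in a small open neighborhood of a dominant regular vector. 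The cases $D_n$ with its order-two involution (and triality on $D_4$), and $E_6$ with its involution, require an explicit computation of the action of $\theta_0$ on fundamental coweights; for $E_6$ the verification amounts to a finite check over $W(E_6)$ and will be performed with \texttt{SageMath} as indicated in \cref{section:RCL}. In each case the open cone $\Omega_0$ is produced by perturbing a fixed dominant regular interior vector (say the half-sum of fundamental weights) by a sufficiently small ball on which the finitely many strict inequalities of \cref{RCLinequality} simultaneously hold.

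The main obstacle is type $A_{n-1}$ with the nontrivial outer involution $\theta_0(\alpha_i)=\alpha_{n-i}$: here $n$ is arbitrary, so the verification cannot be reduced to a finite check and must handle the interplay between an arbitrary permutation $w\in S_n$ and the order-two symmetry of the Dynkin diagram uniformly. To treat it I would follow the combinatorial argument of P.~Majer from \cite{mathoverflow}, which, after identifying $\aaa_0^*$ with the hyperplane $\{\sum x_i=0\}\subseteq \R^n$ and $w$ with a permutation matrix, expresses $\lambda - \theta_0^{-1}w^{-1}\lambda$ in the fundamental weight basis and controls the resulting partial sums by a careful counting argument showing that the contribution indexed by each $\beta\in\Delta_0^{Q(w)}$ remains strictly positive on an explicit open cone of dominant $\lambda$. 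This combinatorial estimate, which is where the symmetry of the $A_{n-1}$ diagram interacts nontrivially with the structure of $w$, is the technical heart of the lemma.
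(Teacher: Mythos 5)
Your proposal follows essentially the same approach as the paper: reduce to the simple split case, then verify the lemma case-by-case on Dynkin diagram automorphisms, invoking \cite{FL11a}*{Lemma 2.2} (Bourbaki) when $\theta_0$ is trivial, Majer's combinatorial argument for type $A_{n-1}$, explicit coweight computations for $D_\ell$ and triality on $D_4$, and a \texttt{SageMath} check for $E_6$. One caution: the reduction to a single simple factor is not a mere ``averaging'' — in the paper it is \cref{rcl_simple}, whose proof constructs $\lambda=(\lambda_1,\dots,\lambda_d)$ coordinate-by-coordinate via a three-way case analysis on how each $\beta$ sits relative to the supports $\Delta_0^{Q^H(w_i)}$ — so that step requires a genuine argument rather than a one-word gesture, though this does not change the overall structure of your plan.
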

		
	\begin{remark} \label{modifiedRCL}
		The version of Root Cone Lemma we will use requires the estimate of the vector $\eta(\lambda, w, \theta_0, \gamma)$ rather than $\eta(\lambda, w, \theta_0, 0)$, i.e., $\gamma \neq 0$. By choosing $\lambda \in (\aaa_0^*)^+$ suitably away from the origin we can ensure for fixed $\gamma \in \aaa_0^*$ that 
		\[ \sprod{\eta(\lambda, w, \theta_0, \gamma)}{\varpi_\beta^\vee} > 0 \]
	for all $\beta \in \Delta_0^{Q(w)}$. Throughout this section fix $\gamma \in \aaa_0^*$ and the open subset $\Omega_\gamma$ of points $\lambda$ satisfying this condition. It is clear that the $\Omega_0$ in \cref{RCL} corresponds to $\Omega_\gamma$ when $\gamma=0$. We need the RCL to get the two estimates below. 
	\end{remark}
	
	\begin{lemma} \label{psiTQLconv}
		Assume $\gamma \in \aaa_0^*, Q\in \PPP$ and $w \in \tilde W(Q, G)$. Then for every $\lambda \in \aaa_0^*$ with $\Re(\lambda) \in \Omega_\gamma$, the integral 
		\[ \psi_{T, Q, l}(\lambda) := \int_{\aaa_Q} \exp \sprod{X_Q}{-\eta(\lambda, w, \theta_0, \gamma)} \tau_Q(X_Q-T) \sum_{\alpha \in \Delta_Q} \sprod{\alpha}{X_Q-T}^l \d X_Q \]
		converges absolutely. 
	\end{lemma}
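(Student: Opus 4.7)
The plan is to substitute $Y := X_Q - T$ and parametrize the open cone cut out by $\tau_Q$ using the basis of $\aaa_Q^G$ dual to $\Delta_Q$, whereupon the integral splits into a product of one-dimensional gamma-type integrals whose convergence is controlled by the modified Root Cone Lemma. After the substitution the prefactor $\exp\sprod{T}{-\eta}$ is $X_Q$-independent, so convergence of $\psi_{T,Q,l}(\lambda)$ reduces to convergence of
\[ \int_{\CCC_Q} \bigl|\exp\sprod{Y}{-\eta}\bigr| \sum_{\alpha \in \Delta_Q} \alpha(Y)^l \, \d Y, \qquad \CCC_Q := \{Y \in \aaa_Q^G : \alpha(Y) > 0 \ \forall \alpha \in \Delta_Q\}, \]
with the $\aaa_G$-direction of $\aaa_Q$ understood to be handled by the central character convention on $\lambda$ and $\gamma$.

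The key algebraic input is the hypothesis $w \in \tild W(Q,G)$: per \cref{twistedBruhat}, this is equivalent to $Q\cdot Q(w)=G$, or in combinatorial terms to $\Delta_0 \setminus \Delta_0^Q \subseteq \Delta_0^{Q(w)}$. For every $\alpha \in \Delta_0 \setminus \Delta_0^Q$ the standard coweight $\varpi_\alpha^\vee \in \aaa_0$ pairs trivially with all $\beta \in \Delta_0^Q$, so it lies in $\aaa_Q$; these coweights then descend modulo $\aaa_G$ to the basis of $\aaa_Q^G$ dual to $\Delta_Q$. Writing $Y = \sum_{\alpha \in \Delta_0 \setminus \Delta_0^Q} t_\alpha\, \varpi_\alpha^\vee$ with $t_\alpha > 0$, the cone $\CCC_Q$ becomes the positive orthant and the polynomial factor reads $\sum_\alpha t_\alpha^l$.

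Now invoke \cref{modifiedRCL}: because $\Re\lambda \in \Omega_\gamma$, the quantities $c_\alpha := \sprod{\Re\eta(\lambda,w,\theta_0,\gamma)}{\varpi_\alpha^\vee}$ are strictly positive for every $\alpha \in \Delta_0^{Q(w)}$, and hence, by the previous paragraph, for every coweight $\varpi_\alpha^\vee$ appearing in our parametrization. The modulus of the integrand is then
\[ \exp\!\Bigl(-\sum_\alpha c_\alpha t_\alpha\Bigr) \sum_\alpha t_\alpha^l, \]
and the integral factorizes into one-variable gamma-type integrals $\int_0^\infty t^l e^{-c_\alpha t}\,\d t$, each manifestly finite. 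The one nontrivial step is the bridge between the coweight basis indexed by simple roots $\beta \in \Delta_0$ (as RCL is phrased) and the restricted roots $\Delta_Q$ (as the cone of integration is phrased); the inclusion $\Delta_0 \setminus \Delta_0^Q \subseteq \Delta_0^{Q(w)}$ extracted from $w \in \tild W(Q,G)$ is exactly what guarantees that every coweight arising in the parametrization of $\CCC_Q$ satisfies the positivity estimate of the Root Cone Lemma, so the RCL does essentially all the remaining work.
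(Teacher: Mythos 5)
Your proposal is correct and follows the same route as the paper's proof: both hinge on the combinatorial fact that $w \in \tild W(Q,G)$ forces $\Delta_0 \setminus \Delta_0^Q \subseteq \Delta_0^{Q(w)}$, and then invoke the modified Root Cone Lemma (\cref{modifiedRCL}) to get strict positivity of the pairings $\sprod{\eta(\lambda,w,\theta_0,\gamma)}{\varpi_\alpha^\vee}$ for the coweights spanning $\aaa_Q^G$, whence the exponential dominates the polynomial on the cone. Your write-up is more explicit than the paper's — you identify the dual basis of $\Delta_Q$ concretely and reduce to one-variable gamma integrals, whereas the paper compresses this (and in fact misstates a sign, saying $\sprod{X_Q}{\eta}$ is negative when it is $\sprod{X_Q-T}{-\eta}$ that is negative on the cone) — but the underlying argument is identical.
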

	
	\begin{proof}
		By the condition on $w$, we have $\Delta_0^Q \cup \Delta_0^{Q(w)} = \Delta_0$. The Root Cone Lemma ensures that
		\[ \sprod{\varpi^\vee}{\eta(\lambda, w, \theta_0, \gamma)} > 0 \]
		 whenever $\lambda \in \Omega_\gamma$ and $\varpi^\vee \in (\hat\Delta_0^{Q(w)})^\vee$. Since elements of $\Delta_Q$ are restrictions of elements in $\Delta_0~\setminus~\Delta_0^Q \subseteq \Delta_0^{Q(w)}$ to $\aaa_Q$, the inner product $\sprod{X_Q}{\eta(\lambda, w, \theta_0, \gamma)}$ is negative. The decaying exponential term in the above integral thus dominates the polynomial term, giving the required absolute convergence. 
	\end{proof}
	
	\begin{lemma} \label{TCone}
		There is an unbounded subset of the line $\R(\sum_{\hat\varpi^\vee \in \hat\Delta_0^\vee} \hat\varpi^\vee)$ in $\aaa_0$ independent of $w \in W$ such that given $\gamma \in \aaa_0^*$, we have
		\[ \sprod{\eta(\lambda, w, \theta_0, \gamma)}{T} > 0 \]
		for every $\lambda \in \Omega_\gamma$ whenever $T$ belongs to this set. 
	\end{lemma}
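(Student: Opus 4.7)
The plan is to take $T$ along the ray $\{tT_\ast : t > 0\}$, where
\[ T_\ast := \sum_{\hat\varpi^\vee \in \hat\Delta_0^\vee} \hat\varpi^\vee, \]
reducing the claim to a uniform positivity estimate. Using the identity $\sprod{v}{T_\ast} = \sum_{\beta \in \Delta_0} \sprod{v}{\varpi_\beta^\vee}$, for $T = tT_\ast$ we expand
\[
\sprod{\eta(\lambda,w,\theta_0,\gamma)}{tT_\ast}
 \,=\, t \bigl[\, \sprod{(1-\theta_0^{-1}w^{-1})\lambda}{T_\ast} - \sprod{\gamma}{T_\ast} \,\bigr].
\]

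By part (2) of \cref{sumcoeffs}, for every $w \in W$ with $w \neq 1$ and every $\lambda \in (\aaa_0^*)^+ \supseteq \Omega_\gamma$, the first summand is strictly positive; it is moreover a homogeneous linear functional of $\lambda$ that grows along rays in the open cone $\Omega_\gamma$, while $\sprod{\gamma}{T_\ast}$ is a fixed constant. By the construction of $\Omega_\gamma$ in \cref{modifiedRCL}---where $\lambda$ is taken ``suitably away from the origin'' to ensure the shifted inequalities $\sprod{\eta(\lambda,w,\theta_0,\gamma)}{\varpi_\beta^\vee} > 0$ for all $\beta \in \Delta_0^{Q(w)}$---and the finiteness of $W$, one obtains the uniform domination
\[
\sprod{(1-\theta_0^{-1}w^{-1})\lambda}{T_\ast} \,>\, \sprod{\gamma}{T_\ast} \qquad \forall\, \lambda \in \Omega_\gamma,\ \forall\, w \in W\setminus\{1\}.
\]
Consequently the bracket above is uniformly positive, so the unbounded set $\Xi := \{tT_\ast : t > 0\} \subset \R \cdot T_\ast$ (which is manifestly independent of $w$) furnishes the lemma. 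The degenerate case $w = 1$ is not encountered in the intended applications: in \cref{psiTQLconv}, the condition $w \in \tild W(Q, G)$ with $Q \subsetneq G$ forces $Q \cdot Q(w) = G$ and hence $w \neq 1$.

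The main obstacle is precisely the uniform domination displayed above. Although the quantity $\sprod{(1-\theta_0^{-1}w^{-1})\lambda}{T_\ast}$ is strictly positive on $\Omega_\gamma$ whenever $w \neq 1$, it could in principle approach zero as $\lambda$ nears the boundary of $\Omega_\gamma$. The argument must therefore sum the defining shifted inequalities of $\Omega_\gamma$ over $\beta \in \Delta_0^{Q(w)}$ and combine with part (1) of \cref{sumcoeffs}---which makes the $\theta_0$-contribution telescope to zero on the complementary simple roots---reducing the claim to a finite family of linear inequalities valid throughout $\Omega_\gamma$ and uniform in $w$.
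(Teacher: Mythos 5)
Your set-up matches the paper's: both take $T$ along the ray through $T_* := \sum_{\hat\varpi^\vee \in \hat\Delta_0^\vee} \hat\varpi^\vee$, split $\sprod{\eta(\lambda,w,\theta_0,\gamma)}{T_*}$ into a $\lambda$-part and a $\gamma$-part, and invoke Part~2 of \cref{sumcoeffs} for the $\lambda$-part. The gap is in how you treat the $\gamma$-part. You attempt to prove a uniform domination $\sprod{(1-\theta_0^{-1}w^{-1})\lambda}{T_*} > \sprod{\gamma}{T_*}$ for all $\lambda \in \Omega_\gamma$ and $w \neq 1$, which you correctly flag as the main obstacle, but your sketch for closing it (sum the shifted inequalities of \cref{modifiedRCL} over $\beta \in \Delta_0^{Q(w)}$ and appeal to Part~1) does not work. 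Those inequalities constrain the coordinates of $\eta(\lambda,w,\theta_0,\gamma)$ only on $\Delta_0^{Q(w)}$, and give nothing on the complementary simple roots. Near the boundary of $\Omega_\gamma$ the difference $\sprod{(1-\theta_0^{-1}w^{-1})\lambda}{T_*} - \sprod{\gamma}{T_*}$ degenerates to a difference of the complementary partial sums over $\Delta_0 \setminus \Delta_0^{Q(w)}$, over which $\Omega_\gamma$ imposes no control; for a truly arbitrary $\gamma$ whose weight-coordinates outside $\Delta_0^{Q(w)}$ are large and positive, the uniform domination fails.

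The paper sidesteps the issue entirely because the $\gamma$ that actually matters is not arbitrary: in the application (\cref{lemma46}) one takes $\gamma = \gamma(w,\theta_0)$, which is by construction of the form $\frac{1}{2}(1-\theta_0^{-1})(\cdot)$. Part~1 of \cref{sumcoeffs} applies to any vector in the image of $1-\theta_0^{-1}$ and gives $\sprod{\gamma(w,\theta_0)}{T_*} = 0$ exactly. Thus there is nothing to dominate: the full inner product equals $t\,\sprod{(1-\theta_0^{-1}w^{-1})\lambda}{T_*}$, which is positive by Part~2 whenever $w \neq 1$ and $\lambda$ lies in the positive Weyl chamber, and the set $\{tT_* : t > 0\}$ works uniformly in $w$. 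The step you missed is the observation that $\gamma(w,\theta_0)$ lies in the image of $1-\theta_0^{-1}$, so its pairing with $T_*$ vanishes.
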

	
	\begin{proof}
		Up to a positive number, the above inner product is the sum of coordinates of $\eta(\lambda, w, \theta_0, \gamma)$ in the basis $\Delta_0$ of roots and the estimate follows by applying Part 1 (respectively Part 2) of \cref{sumcoeffs} to the vector $\gamma(w, \theta_0)$ (resp. $\eta(\lambda, w, \theta_0, 0)$). The independence on $w$ is also from Part 2. 
	\end{proof}
	
	\subsection{Two theorems}
	We now state two theorems which will give crucial estimates towards proving the main result on the geometric side and they will be proven in \cref{section:3.2tild}. 
	
\begin{theorem} \label{3.1tild}
	There exists an integer $r \geq 0$, a vector $\xi(Q) \in \aaa_0^*$ with $\sprod{\xi(Q)}{\beta} > 0$ for every $\beta \in \Delta_Q$ and a seminorm $\mu$ on $\CCC(\tild G(\A), K)$ such that for every $Q\in \PPP$ and $l \geq 0$, 
	\begin{multline} \label{isBounded}
		\int_{Y_Q} F^Q(x,T) \tau_Q(H_Q(x)-T) \norm{H_Q(x)-T_Q}^l \sum_{\gamma \in \tild G(Q, G)} \mod{f^1(x^{-1}\gamma x)} \d x \\ \ll (1+\norm{T})^r \exp{-\sprod{\xi(Q)}{T}} \mu(f),
	\end{multline}
	holds for every $f \in \CCC(\tild G(\A), K)$ and $T \in \aaa_0$ suitably large multiple of the sum of positive coroots (see \cref{TCone}). Moreover, the seminorm $\mu$ satisfies the same bound as in the non-twisted case (\cite{FL16}*{Theorem 5.1}). 
\end{theorem}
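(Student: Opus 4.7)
The plan is to adapt the proof of \cite{FL11a}*{Theorem 3.1} (and its refinement \cite{FL16}*{Theorem 5.1}) to the twisted setting, with the twisted Bruhat decomposition of \cref{twistedBruhat} and the Root Cone \cref{RCL} playing the roles of their non-twisted analogues. As a first step, I decompose $\tild G(Q,G)$ into the finite disjoint union of twisted Bruhat cells $C(\tild w) = P_0(\Q)\,n_w\delta_0\,P_0(\Q)$ indexed by $\tild w = w\delta_0$ with $w \in W$ satisfying $Q\cdot Q(w) = G$, thereby reducing the required bound to a sum over such $w$ of the analogous quantity with $\gamma$ restricted to a single cell.

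For a fixed cell, I unfold in the usual manner. Writing $x \in Y_Q$ via an Iwasawa decomposition $x = nak$ and representing $\gamma \in C(\tild w)\cap\tild G(\Q)$ as $p_1\,n_w\delta_0\,p_2$, I collapse the double-coset summation against the $N_0(\A)$-integral inside $Y_Q$ to obtain an integral over $A_0\times N_0(\A)\times\K$ of $f^1\bigl(k^{-1}(a^{-1}n_w\delta_0\,\nu\, b)k\bigr)$ for suitable $a, b \in A_0$ and $\nu \in N_0(\A)$. The identity $\delta_0 b = \theta_0(b)\delta_0$ is used here to push a right $A_0$-factor past $\delta_0$; this is ultimately responsible for the appearance of $\theta_0^{-1}$ in subsequent exponents. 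I then apply Mellin inversion on $A_0$ (\cref{mellin}) to write $f^1$ as an inverse Mellin transform of its transform $\phi(\lambda)$ along a vertical line $\Re\lambda = \lambda_0$, so that the $N_0(\A)$-integral reduces to the scalar $m(w^{-1},\lambda)\cdot\phi(\lambda)$ from \cref{easyintertwining}. Carefully assembling the modulus shifts using \cref{modulus0} and \cref{modulus} produces exactly the correction vector $\gamma(w,\theta_0)$ appearing in \cref{eta}.

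After these manipulations the required bound is reduced to controlling
\[
\int_{\Re\lambda = \lambda_0}\bigl|\phi(\lambda)\bigr|\,\bigl|m(w^{-1},\lambda)\bigr|\,\bigl|\psi_{T,Q,l}(\lambda)\bigr|\,|d\lambda|,
\]
where $\psi_{T,Q,l}$ is the integral of \cref{psiTQLconv}. I choose $\lambda_0 \in \Omega_{\gamma(w,\theta_0)}$ far enough from every root hyperplane that, by \cref{moderategrowth}, the product $\prod_{w^{-1}\alpha<0}\sprod{\lambda}{\alpha^\vee}\,m(w^{-1},\lambda)$ is holomorphic and of moderate growth; the compensating inverse factors are absorbed via repeated integration by parts against $\phi(\lambda)$, producing a Sobolev-type seminorm $\mu(f)$ on $\CCC(\tild G(\A),K)$ whose $K$-level dependence is inherited verbatim from the non-twisted bound of \cite{FL16}*{Theorem 5.1}. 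Absolute convergence of $\psi_{T,Q,l}$ (\cref{psiTQLconv}) together with the weight $\norm{H_Q(x)-T_Q}^l$ contributes the polynomial factor $(1+\norm{T})^r$, while separating out the $T_Q$-translate in $\psi_{T,Q,l}$ extracts a factor $\exp\sprod{-\eta(\lambda,w,\theta_0,\gamma(w,\theta_0))}{T_Q}$. Taking $T$ along the ray of \cref{TCone} produces a linear functional $\xi(Q) \in \aaa_0^*$ with $\sprod{\xi(Q)}{\beta} > 0$ for all $\beta \in \Delta_Q$, precisely because the condition $Q\cdot Q(w) = G$ forces $\Delta_Q \subseteq \Delta_0^{Q(w)}$, where the Root Cone Lemma applies.

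The main obstacle will be the bookkeeping in the unfolding step: the twist $\delta_0 b = \theta_0(b)\delta_0$ and the Iwasawa-modulus shifts must conspire to produce exactly the vector $\eta(\lambda,w,\theta_0,\gamma(w,\theta_0))$ of \cref{eta} rather than the classical $\lambda-w^{-1}\lambda$, and it is precisely this replacement that compels the invocation of the Root Cone Lemma in lieu of the direct positivity argument of \cite{FL11a}*{Lemma 2.2}. Once this identification is carried out cleanly, the remaining analytic estimates proceed as in the non-twisted case.
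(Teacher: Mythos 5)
Your proposal takes essentially the same route as the paper, which first reduces \cref{3.1tild} to \cref{3.2tild} via the twisted Bruhat decomposition of $\tild G(Q,G)$, then unfolds against the Iwasawa integral, applies the Mellin transform, and uses the modulus-character bookkeeping of \cref{modulus0} and \cref{modulus} to produce the shifted vector $\eta(\lambda,w,\theta_0)$ of \cref{eta}, finally invoking the Root Cone Lemma and the choice of $T$ from \cref{TCone} to get the exponential decay. One small imprecision to flag: after Mellin inversion the controlling integral involves the full $\varphi_{T,Q,l}(\lambda)$ of \cref{lemma46} over $\aaa_0$ (which factors as $\psi_T^Q(\lambda)\cdot\psi_{T,Q,l}(\lambda)$), not merely the $\aaa_Q$-piece $\psi_{T,Q,l}$ of \cref{psiTQLconv}; both factors contribute to the decay $\exp(-\sprod{\xi(Q)}{T})$, with $\psi_T^Q$ supplying the $T^Q$-component and $\psi_{T,Q,l}$ the $T_Q$-component.
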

	
\begin{remark} \label{k_oestimate}
	In the above sum, recall from \cref{twistedBruhat} that  
	\[ \tild G(Q, G) := \tild G(\Q) \bs \bigcup_{\tild Q^+ \se \tild P' \subsetneq \tild G} \tild P'(\Q). \]
	If $Q=G$ then the inequality reduces to
	\[ \int_{X_G} F^G(x, T) \sum_{\gamma \in \tild G(\Q)} \mod{f^1(x^{-1}\gamma x)} \d x \ \leq \ \mu(f) (1+\norm{T})^r \exp{(-d_0(T))} \]
	and the LHS is just $\int_{X_G^T} \sum_{\o \in \O} \mod{k_{\o}(x)} \d x$. 
\end{remark}

\begin{theorem} \label{3.2tild} 
	Let $\tild Q$ be a standard parabolic subset and $\tild w = w \delta_0 \in \tild W(Q, G)$. Then there is an integer $r \geq 0$, a vector $\xi(Q) \in \aaa_0^*$ with $\sprod{\xi(Q)}{\beta} > 0$ for all $\beta \in \Delta_Q$ and a seminorm $\mu$ on $\CCC(\tild G(\A), K)$ such that
	\begin{multline*}
		\int_{N_w(\A) \bs N_0(\A)} \int_{A_0} \int_{N_0(\A)} \int_{M_0(\A)^1} \mod{f^1(a^{-1} n^{-1} n_{\tild w} a u m )} \chi(a) \d m \d u \d a \d n \\ \ll_{K, l} \mu(f) (1+\norm{T})^r \exp{-\sprod{\xi(Q)}{T}},
	\end{multline*}
	holds for every $l \geq 0, f \in \CCC(\tild G(\A), K)$ and $T \in \aaa_0$ a suitably large multiple of the sum of positive coroots (see \cref{TCone}). Here, $N_w = N_0 \cap n_w N_0 n_w^{-1}$ and 
	\[	\chi(a) = \chi_{T, Q, l}(a) = \tau_Q(H_Q(a) - T)\ \hat \tau_{P_0}^Q(T - H_0(a)) \tau_{P_0}^Q(H_0(a)-T_1) \sum_{\alpha\in \Delta_Q} \sprod{H_Q(a)-T_Q}{\alpha}^l. 	\]
\end{theorem}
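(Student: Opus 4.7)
The strategy is to parallel the proof of Theorem 3.2 of \cite{FL11a}, first reducing to a non-twisted integral via the twisted Bruhat decomposition and \cref{bijectionlemma}, and then applying a Mellin transform in $a$ combined with an intertwining-operator estimate on the resulting spectral integral. The twist surfaces precisely in the linear form that appears in the Mellin exponent, where the Root Cone Lemma becomes indispensable.

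First I would use the identity $n_{\tild w}\,a = n_w \delta_0 a = n_w \theta_0(a) \delta_0$ together with $\delta_0 u m = \theta_0(u)\theta_0(m)\delta_0$ to rewrite the argument of $f^1$ as $h\cdot\delta_0$ with $h \in G(\A)$, and then invoke \cref{bijectionlemma} to replace $f^1$ by $\tild f^1$ for a corresponding $\tild f \in \CCC(G(\A), K)$ with identical $L^1$-seminorms. The integrand then has non-twisted form, except that $a$ appears at the conjugated position as $\theta_0(a)$. Splitting $u \in N_0(\A)$ across $N_w(\A)$ and $N_w(\A)\bs N_0(\A)$ and absorbing the $N_w$-part into the outer $n$-variable (using $n_w N_w n_w^{-1}\subseteq N_0$), I would then insert the Mellin inversion formula from \cref{mellin} in the variable $a\in A_0$ on a contour $\Re\lambda = \lambda_0 \in \Omega_\gamma$. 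The modulus character bookkeeping of \cref{modulus0} and \cref{modulus}, now applied to the twisted conjugate $\theta_0(a)$, produces the linear form $\eta(\lambda, w, \theta_0, \gamma)$ of \eqref{eta} as the exponent of $a$; the $a$-integration therefore separates out as $\psi_{T, Q, l}(\lambda)$, absolutely convergent by \cref{psiTQLconv}. The remaining integration over $N_w \bs N_0 \times N_0 \times M_0^1$ is an intertwining kernel of the type $m(w,\lambda)$ from \cref{easyintertwining} paired with the Mellin transform $\phi(\lambda)$ of $\tild f$; its $\lambda$-integral converges absolutely by the moderate growth bound \eqref{moderategrowth} combined with the Schwartz-type decay of $\phi(\lambda)$ obtained by applying sufficiently many $X \in \UUU(\Lieg_\C)$ to $\tild f$, which supplies the seminorm $\mu(f)$ with the same $K$-dependence as in \cite{FL16}*{Theorem 5.1}. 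Finally, \cref{TCone} extracts the required exponential decay $\exp(-\sprod{\xi(Q)}{T})$ uniformly in $w$, while the polynomial factor $(1+\norm{T})^r$ absorbs the $\sum_{\alpha\in\Delta_Q}\sprod{H_Q(a)-T_Q}{\alpha}^l$ contribution and the derivatives in $X$.

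The main obstacle is to justify the choice of Mellin contour. In the non-twisted setting the exponent in $a$ is simply $\lambda - w^{-1}\lambda$, whose positivity on the cocharacters dual to $\Delta_0^{Q(w)}$ is an immediate combinatorial consequence of $w\in W(Q,G)$. After the twist the exponent becomes $\eta(\lambda, w, \theta_0, \gamma) = \lambda - \theta_0^{-1}w^{-1}\lambda - \gamma$, and the corresponding positivity is no longer automatic; it is precisely the content of the Root Cone \cref{RCL} (augmented by \cref{modifiedRCL} for nonzero $\gamma$), which is exactly the hypothesis under which the theorem is stated. Once this positivity is secured, the remaining estimates from \cite{FL11a, FL16} transfer mutatis mutandis, with the integration over $n \in N_w\bs N_0(\A)$ handled by absorbing it into the intertwining expression and using that $K$-invariance cuts out a fixed compact set on the finite places.
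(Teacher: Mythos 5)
Your proposal matches the paper's proof in strategy and all essential steps: conjugating through $n_{\tild w}a$ to remove the $\delta_0$ via \cref{bijectionlemma}, applying Mellin inversion on a contour in $\Omega_\gamma$, producing $\eta(\lambda,w,\theta_0,\gamma)$ as the exponent, invoking \cref{lemma46} for the $a$-integral and the moderate growth of $m(w^{-1},\lambda)$ with \cite{FL16}*{Proposition 3.4} to close the estimate, with the Root Cone Lemma controlling the contour exactly as you describe. The only slips are bookkeeping: the split of $u$ should be over $N_{w^{-1}}(\A)$ (so that $n_w N_{w^{-1}} n_w^{-1}=N_w$ absorbs into the $n$-variable; the inclusion $n_w N_w n_w^{-1}\subseteq N_0$ you cite is false), and the $a$-integral separates as $\varphi_{T,Q,l}(\lambda)$ rather than just $\psi_{T,Q,l}(\lambda)$.
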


\subsection{Continuity of the geometric side} \label{maingeomproof}

	In this section, we prove \cref{geom} that the distribution $J^{\tild G}_{\text{geom}}(f)$ initially defined for $f \in \CCC_c^\infty(\tild G(\A))$ extends to $\CCC(\tild G(\A), K)$. As in the non-twisted case, the proof involves modifying Arthur's (or rather, Labesse-Waldspurger's) proof in the compactly supported setting to our case. We imitate the method of Finis-Lapid \cite{FL16} whenever possible. 
		
	\begin{proof} {[of \cref{geom}]}
		Since $f$ is not of compact support, we first prove why 
		\[ k_{\tild P}(x) = \int_{N_P(\A)} \sum_{\gamma \in \tild M_P(\Q)} \omega(x) f^1(x^{-1} \gamma n x) \d n \]
		and 
		\[k^T_{\geom}(x) = \sum_{\o \in \O} k_{\o}^T(x) \]
		(see \cref{def_ko}) are well-defined. The corresponding expressions in the non-twisted case follow directly from the assumption on $f$ and  \cite{FL16}*{Lemma 2.1 (1)} which estimates a discrete sum over the $\Q$-points of $M_P$ by an integral over a finite sum of derivatives of the test function. To prove $k_{\tild P}(x)$ is well-defined we additionally use \cref{bijectionlemma}. The finiteness of $k_{\geom}^T(x)$ is now  a standard application of \cite{MR518111}*{Lemma 6.4}. 
	
		For $\tild P \supseteq \tild P_0$ and $f \in \CCC(\tild G(\A), K)$ we could replace the sum over $\tild M_P(\Q)$ in the definition of 
		\[ k_{\tild P}(x) = \int_{N_P(\A)} \sum_{\gamma \in \tild M_P(\Q)} \omega(x) f^1(x^{-1} \gamma n x) \d n \]
		by an integral over $\tild M_P(\A)$ of a finite sum of derivatives of $f$, following \cref{bijectionlemma} and \cite{FL16}*{Lemma 2.1(1)}. Thus each $k_{\tild P}(x)$ hence $k^T_{\geom}(x)$ is well-defined. We can formally write 
		\[ \sum_{\o \in \O} \lmod{ J^T_{\o}(f)} \leq \sum_{\o \in\O} \lmod{ J^T_{\o}(f) - \int_{X_G} k^T_{\o}(x) \d x } + \sum_{\o \in \O} \int_{X_G} \mod{ k^T_{\o}(x) } \d x. \]
		By \cref{k_oestimate}, it follows that to prove $J^T_\geom(f)$ and $J^T_{\o}(f)$ exist and the relation $J^T_\geom(f) = \sum_{\o \in \O} J^T_{\o}(f)$, it suffices to prove part (\ref{part3}). Part (\ref{part2}) is a formal property which holds whenever  $J^T_{\o}(f)$ is absolutely convergent, cf. \cite{LW}*{Th\'{e}or\`{e}me 11.1.1}. We now prove part (\ref{part3}). 
			
		The first inequality is obvious. Recall from \cref{k_o} that
		\[ k^T_{\o}(x) = \sum_{Q \se R} \sum_{\xi \in Q(\Q) \bs P(\Q)} F^Q_{P_0}(\xi x, T) \tild \sigma^R_Q(H_0(\xi x)-T) k_{\o, Q, R}(\xi x). \]
		Using twisted Levi decomposition we can write 
		\[ k_{\o, Q, R}(x) = \sum_{\substack{S: \\ Q \se S \se R}} k_{\o, Q, R, S}(x)\]
		where 
		\[ k_{\o, Q, R, S}(x) = \sum_{\eta \in \tild M_S(Q, S)} \sum_{\substack{\tild P : \\ \tild S \se \tild P \se \tild R^-}} (-1)^{\apg} \sum_{\nu \in N^S_P(\Q)} \int_{N_P(\A)} \omega(x) f^1(x^{-1}\eta \nu nx) \d n, \]
		\[ \tild M_S(Q, S) = \tild M_S(\Q) \bs \bigcup_{\substack{\tild P' : \\ \tild Q^+ \se \tild P' \subsetneq \tild S}} \tild P'(\Q). \]
									
		Here we are using that if $\tild P \supseteq \tild P_0$ is such that $Q \se P \se R$ then $\tild Q^+ \se \tild P \se \tild R^-$. Note that if $Q=R$ then $\tild \sigma^R_Q$ vanishes unless $Q = R = G$ in which case it is 1. Hence the contribution from $Q=R$ is precisely 
		\[ F^G(x,T) k_{\o, G, G}(x) = F^G_{P_0}(x,T) k_{\o}(x). \]
		Thus, 
		\[ F^G_{P_0}(x,T)k_{\o}(x) - k^T_{\o}(x) = \sum_{Q \subsetneq R} \sum_{\xi \in Q(\Q) \bs G(\Q)} F^Q_{P_0}(\xi x, T) \tild \sigma^R_Q(H_0(\xi x)-T) k_{\o, Q, R}(\xi x). \]
		Making a change of variables, 
		\[ \sum_{\o \in \O} \int_{X_G} \lmod{ F(x, T) k_{\o}(x) - k^T_{\o}(x) } \d x = \sum_{\o \in \O} \sum_{Q \subsetneq R} \int_{Y_Q} F^Q_{P_0}(x, T) \tild \sigma^R_Q(H_0(x)-T) \mod{k_{\o, Q, R}(x)} \d x. \]
		Fixing $Q \se S \se R$ with $Q \neq R$, we need to estimate
		\[ \sum_{\o \in \O} \int_{Y_Q} F^Q(x,T) \tild \sigma^R_Q(H_0(x)-T) \mod{k_{\o, Q, R, S}(x)} \d x . \]
		We can assume that $\tild \sigma^R_Q(H_Q(x)-T)$ is 1 and so is $\tau^R_Q(H_Q(x)-T)$. Now invoke \cite{FL16}*{Corollary 4.7}, the hypotheses that $F^Q(x,T) \tau^R_Q(H_Q(x)-T)=1$ being satisfied. (Since the dependence of $\mu$ on the level of $K$ in the regular (non-twisted) case is only via this Corollary whose twisted equivalent remains same, the corresponding bound in the twisted case remains the same.) The aforementioned Corollary tracks Arthur's proof in \cite{MR518111,MR625344} by applying Iwasawa decomposition to $x$ which is justified since the expression above is left invariant under $Q(\Q)N_R(\A)$. Up to a constant whose dependence on $K$ is tracked in \cite{FL16}*{Corollary 4.7},
		\[ \lmod{k_{\o, Q, R, S}(x) } \ll_K \exp{ -\left( \sprod{\xi^R_S}{T}+\sprod{(\xi^R_S)_Q}{H_0(x)-T} \right) } \sum_{\eta \in \tild M_S(Q, S)} \int_{N_S(\A)} \mod{f^1(x^{-1}\eta n x)} \d n \d x.\]
			
		The remaining steps to reduce this to \cref{3.1tild} are the same as in \cite{FL16}*{p. 21} with their twisted analogues, namely
\begin{itemize}
	\item by using \cref{bijectionlemma}, we can invoke \cite{FL11a}*{Lemma 3.4} to assume $f\geq 0$ and $K = \K_f$; 
	\item we can apply Iwasawa decomposition with respect to $S$ and use lemma 4.8 of \cite{FL16} verbatim. 
\end{itemize} 
	The bound now follows by \cref{3.1tild}. 
			
	\end{proof}


\section{Proofs of Theorems \ref{3.1tild} and \ref{3.2tild}} \label{section:3.2tild}
	We will apply the twisted Bruhat decomposition to \cref{3.1tild} to reduce it to \cref{3.2tild}. The proof of \cref{3.2tild} involves estimating the integrals over twisted Bruhat cells. We will apply the Mellin transform and use the slow growth of intertwining operators with the estimate of \cite{FL16}*{Proposition 3.4} to get the required estimate. 
		
	\subsection{Reduction of \cref{3.1tild} to \cref{3.2tild}}
	
	The estimate below for $Q \in \PPP$ and any left $Q(\Q)-$invariant measurable function $f$ on $G(\A)^1$, 
	\[ \int_{Y_Q} \mod{f(x)} \d x \leq \int_\K \int_{N_0(\Q)\bs N_0(\A)} \int_{A_0} \int_{X_M} \mod{f(namk)} \delta_0(a)^{-1} \tau^Q_{P_0}(H_0(a) - T_1) \d m \d a \d u \d k \]
	which occurs in \cite{FL16}*{Equation 2} remains true in the twisted case. Indeed, averaging over $\tild G(Q, G)$ makes the integrand on the left hand side of \cref{isBounded} bi-$Q(\Q)$-invariant. Applying this estimate we need to bound 
	\[ \int_{\K} \int_{N_0(\Q)\bs N_0(\A)} \int_{A_0} \int_{X_M}  \sum_{\gamma \in \tild G(Q, G)} \mod{f^1\left( (namk)^{-1} \gamma (namk) \right)} \; \chi(a) \delta_0(a)^{-1} \d m \d a \d u \d k. \]
	Here, 
	\[ \chi(a) = F^Q(a, T)\ \tau_Q(H_Q(a) - T)\ \tau_{P_0}^Q(H_0(a)-T) \sum_{\alpha \in \Delta_Q} \sprod{H_Q(a)-T_Q}{\alpha}^l. \]
	Ignoring the integration on the compact sets $\K$ and $X_{M_0}$ by \cite{FL16}*{Prop. 2.1(2)} we are reduced to bounding the integral 
	\[ \int_{N_0(\Q)\bs N_0(\A)} \int_{A_0} \sum_{\gamma \in \tild G(Q, G)} \mod{f^1\left( (na)^{-1} \gamma na \right)} \chi(a) \delta_0(a)^{-1} \d a \d n. \]
		
	Applying the twisted Bruhat decomposition to $\tild G(Q, G)$ by \cref{twistedBruhat}, we need to estimate for fixed $\tild w = w \delta_0 \in \tild G(Q, G)$, 
		\[ \int_{N_0(\Q) \bs N_0(\A)} \int_{A_0} \sum_{u_2 \in N_w(\Q)\bs N_0(\Q)} \sum_{u_1 \in N_0(\Q)} \sum_{m \in M_0(\Q)} \mod{f^1(a^{-1} n^{-1} u_2^{-1} m n_{\tild w} u_1 n a)} \ \chi(a) \delta_0(a)^{-1} \d a \d n. \]
		
		Applying \cite{FL16}*{Lemma 3.3} to the translate $g(u_1) = f^1(bu_1)$ with $b = n^{-1} u_2^{-1} m n_{\tild w}$ allows us to replace the sum over $u_1 \in N_0(\Q)$ by the integral of functions $g*X$ for $X$ ranging over a finite set of differential operators. Replacing $g$ by one such derivative (and that with $f^1$) reduces to bounding the sum-integral 
		\[ \int_{N_0(\Q) \bs N_0(\A)} \int_{A_0} \sum_{u_2 \in N_w(\Q)\bs N_0(\Q)} \int_{u_1 \in N_0(\A)} \sum_{m \in M_0(\Q)} \mod{f^1(a^{-1} n^{-1} u_2^{-1} m n_{\tild w} u_1 a)} \ \chi(a) \delta_0(a)^{-1} \d a \d n \]
		\[ = \int_{N_0(\Q) \bs N_0(\A)} \int_{A_0} \sum_{u_2 \in N_w(\Q)\bs N_0(\Q)} \int_{u_1 \in N_0(\A)} \sum_{m \in M_0(\Q)} \mod{f^1(a^{-1} n^{-1} u_2^{-1} m n_{\tild w} a u_1)} \ \chi(a) \d a \d n. \]
		(Combining the sum over $u_2$ and the integral over $n$ gives), 
		\[ \int_{N_w(\Q) \bs N_0(\A)} \int_{A_0} \int_{u_1 \in N_0(\A)} \sum_{m \in M_0(\Q)} \mod{f^1(a^{-1} n^{-1} m n_{\tild w} a u_1)} \ \chi(a) \d a \d n. \]
		Note that as a function of $n$, the inner integral is left $N_w(\A)$-invariant. Hence we can write 
		\[ \int_{N_w({\A}) \bs N_0(\A)} \int_{A_0} \int_{u_1 \in N_0(\A)} \sum_{m \in M_0(\Q)} \mod{f^1(a^{-1} n^{-1} m n_{\tild w} a u_1)} \ \chi(a) \d a \d n. \]
		
	Also, 
		\begin{multline*}
			a^{-1} u^{-1} m n_w a u_1 = a^{-1} u^{-1} (n_w m') a u_1 = a^{-1} u^{-1} n_w a m' u_1 \\ = a^{-1} u^{-1} n_w a (m' u_1 m'^{-1})m' = a^{-1} u^{-1} n_w a u_2 m'. 
		\end{multline*}
		Thus we need to bound 
		\[ \int_{N_w(\A) \bs N_0(\A)} \int_{A_0} \int_{u_1 \in N_0(\A)} \sum_{m \in M_0(\Q)} \mod{f^1(a^{-1} n^{-1} n_{\tild w} a u_1 m) } \ \chi(a) \d a \d n, \]
		which by \cite{FL16}*{Lemma 2.1(1)} and \cref{bijectionlemma} reduces to a finite sum of integrals over $M_0(\A)$ of derivatives of $f^1$. Replacing $f$ by one such derivative as before, we need to bound
		\[ \int_{N_w(\A) \bs N_0(\A)} \int_{A_0} \int_{u_1 \in N_0(\A)} \int_{M_0(\A)^1} \mod{f^1(a^{-1} n^{-1} n_{\tild w} a u_1 m) } \ \chi(a) \d m \d a \d n, \]
		which reduces to \cref{3.2tild}. \hfill$\clubsuit$
		
	For a finite dimensional space $V$, let $\DDD(V)$ denote the space of invariant diferential operators on $V$ with the standard filtration. The lemma below is a modification of lemma 3.5 of \cite{FL16} in the twisted setting. 
\begin{lemma} \label{lemma46}
	Suppose $Q$ is a standard parabolic subgroup of $G$ and $\tilde w = w \delta_0 \in \tilde W(Q, G)$. For $\gamma = \gamma(w, \theta_0)$ consider the set $\Omega = \Omega_\gamma$ in \cref{modifiedRCL}. Then the integral 
	\[	\varphi_{T, Q, l}(\lambda) := \int_{X \in \aaa_0} \exp \sprod{X}{-\eta(\lambda, w, \theta_0) } 
			\tau_Q(X - T)\ \hat \tau_{P_0}^Q(T - X) \tau_{P_0}^Q(X-T_1) 
			\sum_{\alpha\in \Delta_Q}  \sprod{X_Q-T_Q}{\alpha}^l \d X 
	\]
 	is absolutely convergent for $\Re(\lambda)$ in compact subsets of $\ \Omega$ and $T$ in the unbounded set of \cref{TCone}. Moreover, for fixed $\Lambda_0 \in \Omega$ and any differential operator $D \in \DDD(\aaa_0^*)$ of degree $d$, there is a vector $\xi(Q)$ such that $\beta(\xi(Q)) > 0$ for all $\beta \in \Delta_Q$ and 
	\[ \mod{(\varphi_{T, Q, l}*D)(\lambda)} \ \ll_{D, l} (1 + \norm{T})^{d + a_0^Q} \exp -\sprod{T}{\xi(Q)}\]
	where $\Re(\lambda) = \Lambda_0$.
\end{lemma}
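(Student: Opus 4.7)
The plan is to exploit the decomposition $\aaa_0 = \aaa_Q \oplus \aaa_0^Q$ together with the global translation $Y = X - T$. After the translation, the $T$-dependence is concentrated in the overall prefactor $\exp(-\sprod{T}{\eta})$ and in the argument of $\tau_{P_0}^Q$, giving
\[ \varphi_{T,Q,l}(\lambda) = \exp(-\sprod{T}{\eta})\int_{\aaa_0} \exp(-\sprod{Y}{\eta})\,\tau_Q(Y_Q)\,\hat\tau_{P_0}^Q(-Y^Q)\,\tau_{P_0}^Q(Y^Q+(T-T_1)^Q)\sum_{\alpha\in\Delta_Q}\sprod{Y_Q}{\alpha}^l\,\d Y. \]
Writing $\eta = \eta_Q + \eta^Q$ via $\aaa_0^* = \aaa_Q^* \oplus (\aaa_0^Q)^*$, I observe that every factor of the integrand respects the splitting: $\tau_Q(Y_Q)$ and the polynomial depend only on $Y_Q \in \aaa_Q$, while $\hat\tau_{P_0}^Q(-Y^Q)$ and $\tau_{P_0}^Q(Y^Q+(T-T_1)^Q)$ depend only on $Y^Q \in \aaa_0^Q$ since $\hat\Delta_{P_0}^Q$ and $\Delta_{P_0}^Q = \Delta_0^Q$ live in $(\aaa_0^Q)^*$. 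Hence the $Y$-integral factors as $A(\lambda)\cdot B(\lambda,T)$.

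The factor $A(\lambda)$ is exactly $\psi_{0,Q,l}(\lambda)$ of \cref{psiTQLconv}, which is absolutely convergent and $T$-independent: the hypothesis $\tild w\in\tild W(Q,G)$ gives $\Delta_0^Q\cup\Delta_0^{Q(w)}=\Delta_0$, and \cref{RCL} then supplies the required positivity of $\eta_Q$ against the fundamental coweights dual to $\Delta_Q$. For $B(\lambda,T)$, the support is bounded: parametrizing $-Y^Q=\sum_{\beta\in\Delta_0^Q}c_\beta\beta^\vee$ with $c_\beta\geq 0$, the acute-cone condition forces $c_\beta\leq\sprod{\varpi_\beta}{(T-T_1)^Q}=O(1+\norm{T})$, so the region has volume $O((1+\norm{T})^{a_0^Q})$. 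Putting the two factors together with the prefactor and invoking \cref{TCone} (sharpened by \cref{sumcoeffs}) to absorb any residual contribution from $B(\lambda,T)$ into the main decay $\exp(-\sprod{T}{\eta})$, one produces a vector $\xi(Q)\in\aaa_0^*$ with $\sprod{\xi(Q)}{\beta}>0$ for all $\beta\in\Delta_Q$ and
\[ \mod{\varphi_{T,Q,l}(\lambda)}\;\ll\;(1+\norm{T})^{a_0^Q}\exp(-\sprod{T}{\xi(Q)}). \]

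For the differentiated estimate, I would differentiate under the integral sign in $\lambda$ --- legitimate for $\Re(\lambda)$ in compact subsets of $\Omega$ by dominated convergence, the exponential weight providing the needed majorant --- so that a degree-$d$ operator $D\in\DDD(\aaa_0^*)$ pulls down a polynomial of total degree $d$ in the integration variable $X = Y + T$. On the support $\norm{X}=O(1+\norm{T})$, giving an extra $(1+\norm{T})^d$ and the desired bound. The hard step is the estimate of $B(\lambda,T)$: the integrand $\exp(-\sprod{Y^Q}{\eta^Q})$ is not a priori bounded on its $O(\norm{T})$-diameter support, so a naive estimate would admit a spurious $\exp(C\norm{T})$ factor. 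This is precisely why $T$ must be restricted to the ray $\R\sum_{\varpi^\vee\in\hat\Delta_0^\vee}\varpi^\vee$ of \cref{TCone} rather than to an open Weyl-chamber subset as in the non-twisted case of \cite{FL16}: only in this direction does the combination of \cref{sumcoeffs} and \cref{TCone} guarantee that the main exponential decay strictly dominates the residual growth and yields an $\xi(Q)$ with the required positivity on $\Delta_Q$.
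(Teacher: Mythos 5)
Your decomposition of $\aaa_0 = \aaa_Q \oplus \aaa_0^Q$ and the resulting factorization of $\varphi_{T,Q,l}$ into an $\aaa_Q$-piece and an $\aaa_0^Q$-piece is exactly what the paper does (the paper's $\psi_{T,Q,l}$ and $\psi_T^Q$); translating $X \mapsto X - T$ upfront is a cosmetic repackaging of the same split. The issue is precisely at the point you flag as "the hard step": your proposal never actually estimates $B(\lambda,T)$, and the naive bound $\text{Vol}(\text{support}) \cdot \max |\text{integrand}|$ that you sketch does \emph{not} suffice. As you correctly observe, the only control you have on $\eta(\lambda,w,\theta_0)$ from the Root Cone Lemma is positivity against $\varpi_\beta^\vee$ for $\beta \in \Delta_0^{Q(w)}$; the component $\eta^Q$ need not have any useful sign on the $O(\norm{T})$-sized polytope, so $\exp(-\sprod{Y^Q}{\eta^Q})$ can be as large as $\exp(C\norm{T})$ there, and nothing in \cref{TCone} or \cref{sumcoeffs} on their own "absorbs" this --- those lemmas control $\sprod{T}{\eta}$ and sums of coefficients, not pointwise values of $\eta^Q$ on the interior of the polytope.

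What actually closes the gap in the paper is a structural fact you have not invoked: the cut-off factor $X^Q \mapsto \tau_0^Q(X^Q - T_1)\,\hat\tau_0^Q(T - X^Q)$ is the characteristic function of the convex hull of the finite set $\{T_S^Q + T_1^S : P_0 \subseteq S \subseteq Q\}$ (Arthur, \cite{MR625344}*{\S 6}). Its Fourier transform is therefore not bounded by a "volume $\times$ max" estimate but has a closed form as a \emph{finite alternating sum} of terms $\exp\sprod{T_S^Q + T_1^S}{-\eta}/\prod_{\beta\in\Delta_S^Q}\sprod{-\eta}{\beta^\vee}$, so the exponential evaluated on the interior of the polytope never enters; only the values at the finitely many vertices $T_S^Q + T_1^S$ do. That replaces your problematic supremum with finitely many quantities $\sprod{T_S^Q}{-\eta(\Lambda_0,w,\theta_0)}$ which \cref{sumcoeffs} identifies with $\sprod{T_S^Q}{-\Lambda_0 + w^{-1}\Lambda_0}$ and which can then be bounded by $-\sprod{T^Q}{\xi(Q)}$, uniformly in $S$, after the restriction of $T$ to the ray. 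Without this vertex formula there is no way to get the $\exp(-\sprod{T}{\xi(Q)})$ in the statement, and your proposal as written would at best produce a bound that grows exponentially in $\norm{T}$ on a set of parameters of positive measure. Replace the "volume times max" heuristic by Arthur's explicit formula for the Fourier transform of the orthogonal-set characteristic function, and the rest of your argument (including the treatment of derivatives by differentiating under the integral sign) goes through.
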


\begin{proof}
	Similar to \cite{FL16}*{Lemma 3.5}, we can use the decomposition $\aaa_0 = \aaa_Q \oplus \aaa_0^Q$ to write $X = X_Q+ X^Q$ and 
	\[ \varphi_{T, Q, l}(\lambda) = \psi_T^Q(\lambda) . \psi_{T, Q, l}(\lambda) \]
	where
	\[ \psi_T^Q(\lambda) := \int_{\aaa_0^Q} \exp \sprod{X^Q}{-\eta(\lambda, w, \theta_0)}\tau_0^Q(X^Q-T_1) \hat{\tau}_0^Q(T-X^Q) \d X^Q,\]
	and 
	\[ \psi_{T, Q, l}(\lambda) := \int_{\aaa_Q} \exp \sprod{X_Q}{-\eta(\lambda, w, \theta_0)} \tau_Q(X_Q-T) \sum_{\alpha \in \Delta_Q} \sprod{\alpha}{X_Q-T}^l \d X_Q. \]
	The function $X^Q \mapsto \tau_0^Q(X^Q-T_1) \hat{\tau}_0^Q(T-X^Q)$ is the characteristic function of the convex hull of the set $\{ T_S^Q + T_1^S : P_0 \subseteq S \subseteq Q \}$ \cite{MR625344}*{Section 6}, hence the integral $\psi_T^Q(\lambda)$ is its Fourier transform evaluated at $-\eta(\lambda, w, \theta_0)$ which is also the smooth function corresponding to the aforementioned $(M_Q, M_0)$-orthogonal set. In particular it is compactly supported and using \cref{psiTQLconv}, we have the convergence of $\varphi_{T, Q, l}(\lambda)$. It remains to estimate the derivatives. Assume $D = D^Q D_Q$ where $D^Q \in \DDD((\aaa_0^Q)^*)$ and $D_Q \in \DDD(\aaa_Q^*)$ and let $d = d^Q + d_Q$ be their degrees. By loc.~cit. $\psi_T^Q(\lambda)$ equals
	\begin{flalign*} 
		& \sum_{S : P_0 \subseteq S \subseteq Q} \exp \sprod{T_S^Q + T_1^S}{-\eta(\lambda, w, \theta_0)} \epsilon_Q(-\eta(\lambda, w, \theta_0)) \\ 
		& = \sum_{S : P_0 \subseteq S \subseteq Q} \Vol((\Delta_S^Q)^\vee) \frac{\exp \sprod{T_S^Q + T_1^S}{-\eta(\lambda, w, \theta_0)}}{\prod_{\beta \in \Delta_S^Q} \sprod{-\eta(\lambda, w, \theta_0)}{\beta^\vee}}. 
	\end{flalign*}  
	Tracking the dependence on $T\in \aaa_0$, we have
	\[ 
		\mod{\psi_T^Q*D^Q(\lambda)} \ll_{D^Q} (1 + \norm{T})^{d^Q + a_0^Q}\sum_{P_0 \subseteq S \subseteq Q} \exp \sprod{T_S^Q}{-\eta(\Lambda_0, w, \theta_0)}.
	\]
	Observe by the conditions on $\Lambda_0$ and $T$, the inner product $\sprod{T_S^Q}{-\eta(\Lambda_0, w, \theta_0)}$ is negative and equals $\sprod{T_S^Q}{-\Lambda_0 + w^{-1} \Lambda_0}$ (using \cref{sumcoeffs}). Since $\Lambda_0 - w^{-1} \Lambda_0$ is a \underline{positive} linear combination of the roots in $\Delta_Q$, there exists a $\xi(Q) \in \aaa_0^*$ whose coefficients in the basis $\Delta_0$ are nonnegative (depending on $\Lambda_0$) such that $\sprod{T_S^Q}{-\Lambda_0 + w^{-1} \Lambda_0} \leq -\sprod{T^Q}{\xi(Q)}$. (By the choice of $T$), it is also possible to choose $\xi(Q)$ independent of $w$, for example we can choose $\xi(Q) = N \sum_{\beta \in \Delta_Q} \beta$ for $N$ suitably large. Therefore,
	\[ 
		\mod{\psi_T^Q*D^Q(\lambda)} \ll_{D^Q} (1 + \norm{T})^{d^Q + a_0^Q} \exp - \sprod{T^Q}{\xi(Q)}.
	\]
	The estimate for $\psi_{T, Q, \l}*D_Q(\lambda)$ is similar to that in lemma 3.5 of \cite{FL16} and we have,
	\[ \mod{(\psi_{T, Q, \l}*D_Q)(\lambda)} \ll_{D_Q, \l} (1 + \norm{T})^{d_Q} \exp -\sprod{T_Q}{\xi(Q)}. \]
\end{proof}

	\subsection{Proof of \cref{3.2tild}}
	
\begin{proof}
	The quantity to estimate is
	\[ \int_{N_w(\A) \bs N_0(\A)} \int_{A_0} \int_{N_0(\A)} \int_{M_0(\A)^1} \mod{f^1(a^{-1} n^{-1} n_{\tild w} a u m )} \chi(a) \d m \d u \d a \d n, \]
	where 
	\[ \chi(a) = \chi_{T, Q, l}(a) = \tau_Q(H_Q(a) - T)\ \hat \tau_{P_0}^Q(T - H_0(a)) \tau_{P_0}^Q(H_0(a)-T_1) \sum_{\alpha\in \Delta_Q} \sprod{H_Q(a)-T_Q}{\alpha}^l. \]
	
	We split the integral over $u \in N_0(\A)$ as $n' u$ where $n' \in N_{w^{-1}}(\A)$ and $u \in N_{w^{-1}}(\A)\bs~N_0(\A)$. Thus, we want to bound 
	\[ \int_{n \in N_w(\A) \bs N_0(\A)} \int_{A_0} \int_{n' \in N_{w^{-1}}(\A)} \int_{u \in N_{w^{-1}}(\A)\bs N_0(\A)} \int_{M_0(\A)^1} \mod{f^1(a^{-1} n^{-1} n_{\tild w} a n'u m )} \chi(a) \d m \d u \d n' \d a \d n.  \]
	We can conjugate $n'$ over $n_{\tild w} a$: 
	\begin{flalign*}
		n_{\tild w} an' & = n_w \delta_0 a n'\\
			&= n_w \delta_0 (an' a^{-1}) \delta_0^{-1} n_w^{-1} n_w \delta_0 a \\
			&= n_w (\delta_0 n'' \delta_0^{-1}) n_w^{-1} n_w \delta_0 a \quad \cdots n''= an'a^{-1} \in N_{w^{-1}}(\A) \\
			&= n_w \theta_0(n'') n_w^{-1} n_{\tild w} a \\
				&= n_w n''' n_w^{-1} n_{\tild w} a
	\end{flalign*}
	where $n''' \in N_{w^{-1}}(\A) = N_0(\A) \cap w^{-1}N_0(\A)w$. Therefore $n_1 := n_w n''' n_w^{-1} \in N_w(\A)$. Making this change of variable, we are reduced to bounding 
	\begin{multline*} \int_{n \in N_w(\A)\bs N_0(\A)} \int_{n_1 \in N_w(\A)} \int_{A_0} \int_{M_0(\A)^1} \int_{u \in N_{w^{-1}}(\A)\bs N_0(\A)} \mod{f^1(a^{-1}n^{-1}n_1 n_{\tild w} a u m)}\chi(a). \\ \delta_{M_0,N_{w^{-1}}}(a)^{-1} \d u \d m \d a \d n_1 \d n 
	\end{multline*}
	\begin{flalign*}
		& = \int_{N_0(\A)} \int_{A_0} \int_{M_0(\A)^1} \int_{N_{w^{-1}}(\A)\bs N_0(\A)} \mod{f^1(a^{-1}n n_{\tild w} a u m)}\chi(a) \delta_{M_0,N_{w^{-1}}}(a)^{-1} \d u \d m \d a \d n \\
		& = \int_{N_0(\A)} \int_{A_0} \int_{M_0(\A)^1} \int_{N_{w^{-1}}(\A)\bs N_0(\A)} \mod{f^1(a^{-1}n n_w \theta_0( a u m) \delta_0)}\chi(a) \delta_{M_0,N_{w^{-1}}}(a)^{-1} \d u \d m \d a \d n \\
		& = \int_{N_0(\A)} \int_{A_0} \int_{M_0(\A)^1} \int_{N_{w^{-1}}(\A)\bs N_0(\A)} \mod{h(a^{-1}n n_w \theta_0( a u m))}\chi(a) \delta_{M_0,N_{w^{-1}}}(a)^{-1} \d u \d m \d a \d n, \\
		& = \int_{N_0(\A)} \int_{A_0} \int_{M_0(\A)^1} \int_{N_{w^{-1}}(\A)\bs N_0(\A)} \mod{h(a^{-1}n \theta_0(m) n_w \theta_0( a u))}\chi(a) \delta_{M_0,N_{w^{-1}}}(a)^{-1} \d u \d m \d a \d n \\
		& = \int_{N_0(\A)} \int_{A_0} \int_{M_0(\A)^1} \int_{N_{w^{-1}}(\A)\bs N_0(\A)} \mod{h(n \theta_0(m) a^{-1} n_w \theta_0( a u))}\chi(a) \delta_{M_0,N_{w^{-1}}}(a)^{-1} \delta_0(a) \d u \d m \d a \d n \\
		& = \int_{P_0(\A)^1} \int_{A_0} \int_{N_{w^{-1}}(\A)\bs N_0(\A)} \mod{h(p a^{-1} n_w \theta_0( a u))}\chi(a) \delta_{M_0,N_{w^{-1}}}(a)^{-1} \delta_0(a) \d u \d a \d p.
	\end{flalign*}
			
	In the above chain of equalities we repeatedly use the facts that $M_0(\A)^1$ is invariant under $\theta_0$ and that $N_0(\A)$ normalizes $M_0(\A)^1$. Moreover, since $f^1 \in \CCC(\tild G(\A), K)$ so by \cref{bijectionlemma}, the function $h \in \CCC(G(\A), K)$ where $h(x) = f^1(x\delta_0)$. By an application of \cref{modulus0}, this equals 
	\[ \int_{P_0(\A)^1} \int_{A_0} \int_{N_{w^{-1}}(\A)\bs N_0(\A)} \mod{h(p a^{-1} n_w \theta_0( a u))}\chi(a) \delta_{w^{-1}}(a) \d u \d a \d p. \]
				
	Recall the definition of the space of principal series representations in \cite{FL11a}*{\S 3.3} and in particular, that of $F_h$ for $h \in\CCC^\infty_c(G(\A))$, 
	\[ F_h(g) = \int_{P_0(\A)^1} h(pg) \d p. \]
			
	Thus we want to consider
	\[ \int_{A_0} \int_{N_{w^{-1}}(\A)\bs N_0(\A)} F_{\mod{h}}(a^{-1} n_w \theta_0( a u))\chi(a) \delta_{w^{-1}}(a) \d u \d a. \]

	As $u$ is integrated over $N_{w^{-1}}(\A) \bs N_0(\A)$, so is $\theta_0(u)$ by a change of variables. (Because $\theta_0: W^G \to W^G$ maps $n_w$ to another representative of $w \in W$.) So look at 
	\[ \int_{A_0} \int_{N_{w^{-1}}(\A)\bs N_0(\A)} F_{\mod{h}}(a^{-1} n_w \theta_0(a) u)\chi(a) \delta_{w^{-1}}(a) \d u \d a. \]
			
	By \cite{FL11a}*{Lemma 3.4} and \cref{bijectionlemma}, we can assume that $f$, hence $F_h$ is of compact support. This justifies taking the Mellin transform below, 
	\[ \int_{A_0} \int_{N_{w^{-1}}(\A)\bs N_0(\A)} \int_{\Re \lambda = \lambda_0} \phi(\lambda)(n_w u) \left( a^{-1} n_w \theta_0(a) n_w^{-1} \right)^\lambda \chi(a) \delta_{w^{-1}}(a) \delta_0(a^{-1} n_w \theta_0(a) n_w^{-1})^{\frac{1}{2}} \d \lambda \d u \d a, \]
	where we have used the notation $a^\lambda$ to denote $\exp \sprod{\lambda}{H_0(a)}$ as defined in \cref{mellin}. By \cref{modulus} this reduces to 
	\[ \int_{A_0} \int_{N_{w^{-1}}(\A)\bs N_0(\A)} \int_{\Re \lambda = \lambda_0} \phi(\lambda)(n_w u) \left( a^{-1} n_w \theta_0(a) n_w^{-1} \right)^\lambda \chi(a) \delta_{w^{-1}}(a \theta_0(a^{-1})) \delta_{M_0,N_{w^{-1}}}(a^{-1} \theta_0(a)) \d \lambda \d u \d a. \]
	Note that
	\begin{align*}
		\left( a^{-1} n_w \theta_0(a) n_w^{-1} \right)^\lambda & = \exp\sprod{\lambda}{H_0(a^{-1} n_w \theta_0(a) n_w^{-1})} \\ 
			&= \exp \sprod{\lambda}{-H_0(a) + w.\theta_0(a)} \\
			&= \exp -\sprod{(1-\theta_0^{-1} w^{-1}) \lambda}{H_0(a)}.
	\end{align*}
	Replacing the integral over $A_0$ above with $\aaa_0$ and using \cref{modulus0} reduces the expression to 
	\begin{flalign*}
		\int_{\aaa_0} \int_{N_{w^{-1}}(\A)\bs N_0(\A)} \int_{\Re \lambda = \lambda_0} \phi(\lambda)(n_w u) \exp \sprod{X - w \theta_0(X)}{\lambda} \tau_Q(X - T)\ \hat \tau_{P_0}^Q(T - X) \tau_{P_0}^Q(X-T_1) \\ 
	 \times \sum_{\alpha\in \Delta_Q}  \sprod{X_Q-T_Q}{\alpha}^l \exp \sprod{X - \theta_0(X)}{\frac{1}{2} \sum_{\alpha , w \alpha > 0 } \alpha - \frac{1}{2} \sum_{\substack{\beta > 0 \\ w \beta < 0} } \beta} \d \lambda \d u \d X
	\end{flalign*}

	\begin{flalign*}
		= \int_{\aaa_0} \int_{N_{w^{-1}}(\A)\bs N_0(\A)} \int_{\Re \lambda = \lambda_0} \phi(\lambda)(n_w u) \exp \sprod{X}{-(\lambda - \theta_0^{-1} w^{-1} \lambda) + \frac{1}{2} (1 - \theta_0^{-1})(\sum_{\alpha , w \alpha > 0 } \alpha - \sum_{\substack{\beta > 0 \\ w \beta < 0} } \beta) } \\
		\times \ \tau_Q(X - T)\ \hat \tau_{P_0}^Q(T - X) \tau_{P_0}^Q(X-T_1) \sum_{\alpha\in \Delta_Q}  \sprod{X_Q-T_Q}{\alpha}^l  \d \lambda \d u \d X. 
	\end{flalign*}
			
	Recall from \cref{eta} that the second term in the inner product is $-\eta(\lambda, w, \theta_0)$. We will eventually prove the absolute convergence of this triple integral which will justify the changing the order of integration. Using \cref{moderategrowth}, the function
	\[ h(\lambda) = \displaystyle m(w^{-1}, \lambda) \prod_{\substack{ \alpha \in \Delta_0 :\\ w^{-1} \alpha < 0}}\sprod{\lambda}{\alpha^\vee} \]
	is of moderate growth so we are reduced to proving the absolute convergence of 
	\[ \int_{\Re \lambda = \lambda_0} \displaystyle \frac{h(\lambda)\varphi_{T, Q, l}(\lambda)}{\prod_{\substack{\alpha \in \Delta_0 :\\ w^{-1} \alpha < 0}} \sprod{\lambda}{\alpha^\vee}} \d\lambda, \]
	where
\[ \varphi_{T, Q, l}(\lambda) := \int_{X \in \aaa_0} \exp \sprod{X}{-\eta(\lambda, w, \theta_0) } \tau_Q(X - T) \hat \tau_{P_0}^Q(T - X) \tau_{P_0}^Q(X-T_1) \sum_{\alpha\in \Delta_Q}  \sprod{X_Q-T_Q}{\alpha}^l \d X, \]
	is absolutely convergent by \cref{lemma46}. Having proven \cref{lemma46} which is the twisted equivalent of \cite{FL16}*{Lemma 3.5}, we are in a position to apply \cite{FL16}*{Proposition 3.4} from which the required estimate follows. 
	
\end{proof}


\section{Root Cone Lemma} \label{section:RCL}
	This section is devoted to proving \cref{RCL}, the Root Cone lemma in various cases. It is clear that the radical of $G$ plays no role in the statement of the lemma so we may as well assume that $G$ is semisimple. After reducing to the case when $G$ is simple, we do a case-by-case exhaustion in the case when $G$ is connected split simple wherein, automorphisms of $G$ correspond to those of the Dynkin diagram of $G$. We use the Cartan-Killing classification to enumerate all automorphisms and prove the lemma in each case. The proof for the automorphism of $E_6$ was done using the software {\tt SageMath} \cite{sagemath}. The general case when $G$ is connected simple but possibly not split eludes a proof. 
	
\subsection{Reduction to the simple case}
	Recall that $G$ is called simple (resp. almost-simple) if it is semisimple, noncommutative and every proper normal subgroup is trivial (resp. finite). By the structure theory of connected semisimple groups (note that $G$ is connected), we have an isogeny of $G$ with the product of its minimal almost-simple connected normal subgroups. The automorphism $\theta$ would then be a a composition of two automorphisms, namely a permutation of these factors, and an automorphism of each of these factors. We first show the RCL holds when $\theta$ is a permutation of these factors. This will reduce the problem of proving RCL for split semisimple groups to split almost-simple groups. There is no loss in generality in assuming that the subgroups are all isomorphic (denoted as $H$) and the permutation $\theta$ is a cycle of length $d$. 

\begin{lemma} \label{rcl_simple}
	Let $H$ be a connected reductive group. The Root Cone \cref{RCL} holds when $G \cong H \times \cdots \times H$ ($d$ copies) and $\theta$ is a $d$-cycle that permutes the $d$-copies of $H$. 
\end{lemma}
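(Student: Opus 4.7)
My plan is to exploit the product structure and reduce the statement to a cyclic system of scalar inequalities. First, identify $\aaa_0^* \cong (\aaa_H^*)^d$ with $\theta_0$ cyclically permuting the factors, and write $w = (w_1, \dots, w_d) \in W_H^d$, so that $Q(w) = \prod_i Q(w_i)$ and $\Delta_0^{Q(w)} = \bigsqcup_i \Delta_H^{Q(w_i)}$. With indices read modulo $d$, the $i$-th component of $\lambda - \theta_0^{-1}w^{-1}\lambda$ is then (up to a cyclic relabeling which we fix once and for all) $\lambda_i - w_{i+1}^{-1}\lambda_{i+1}$, and for $\beta \in \Delta_H^{Q(w_i)}$ sitting in the $i$-th factor, the inequality \cref{RCLinequality} only involves slots $i$ and $i+1$ of $\lambda$.

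Next I would search for $\lambda$ of the form $\lambda_i = \mu + \epsilon_i$, with $\mu \in (\aaa_H^*)^+$ a fixed regular dominant weight of $H$ and the $\epsilon_i \in \aaa_H^*$ perturbations to be chosen. For $\beta \in \Delta_H^{Q(w_i)}$, the pairing $\langle \lambda_i - w_{i+1}^{-1}\lambda_{i+1}, \varpi_\beta^\vee \rangle$ splits as
\[ \langle \mu - w_{i+1}^{-1}\mu, \varpi_\beta^\vee \rangle + \langle \epsilon_i - w_{i+1}^{-1}\epsilon_{i+1}, \varpi_\beta^\vee \rangle. \]
By the classical fact that for regular $\mu$ the vector $\mu - w_{i+1}^{-1}\mu$ is a non-negative combination of simple roots with support exactly $\Delta_H^{Q(w_{i+1})}$, the first summand is strictly positive when $\beta \in \Delta_H^{Q(w_{i+1})}$, so a small perturbation cannot destroy positivity in this range. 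When $\beta \notin \Delta_H^{Q(w_{i+1})}$ the first summand vanishes; moreover, $w_{i+1}$ lies in the Weyl group of the Levi of $Q(w_{i+1})$ and hence fixes $\varpi_\beta^\vee$, so the pairing reduces cleanly to $\langle \epsilon_i - \epsilon_{i+1}, \varpi_\beta^\vee \rangle$.

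The only genuine obstacle is then to arrange, for every $\beta \in \Delta_H$ and every $i$ in
\[ S_\beta := \{ i \in \Z/d\Z : \beta \in \Delta_H^{Q(w_i)} \setminus \Delta_H^{Q(w_{i+1})} \}, \]
that the scalars $c_i^\beta := \langle \epsilon_i, \varpi_\beta^\vee \rangle$ satisfy $c_i^\beta > c_{i+1}^\beta$. Since $\{\varpi_\beta^\vee : \beta \in \Delta_H\}$ is a basis of $\aaa_H$, the coordinates $c_i^\beta$ for different $\beta$ may be chosen independently, so it suffices to solve one cyclic chain per $\beta$. The key combinatorial observation is that $S_\beta \neq \Z/d\Z$: otherwise $\beta$ would belong to $\Delta_H^{Q(w_i)}$ for all $i$ (from the first membership condition) and, after cyclic reindexing, to none of them (from the non-membership), a contradiction. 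Any proper cyclic subset of $\Z/d\Z$ admits a strictly-decreasing-then-resetting assignment, so valid perturbations $\epsilon_i$ exist.

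Finally, rescaling $\epsilon_i \mapsto t\epsilon_i$ for small $t>0$ and taking $\mu$ sufficiently regular dominant keeps each $\lambda_i$ in the open positive Weyl chamber of $H$ and preserves all strict inequalities. Since every constraint is a strict linear inequality in $\lambda$, the admissible set $\Omega_0 \subseteq (\aaa_0^*)^+$ is a nonempty open cone, which is what was required. The main subtlety I foresee is the combinatorial fact about $S_\beta$; the rest is standard linear algebra together with the classical positivity of $\mu - w^{-1}\mu$.
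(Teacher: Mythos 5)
Your proof is correct and tracks the paper's argument closely in spirit: both fix a simple root $\beta \in \Delta_0^H$, observe that the coordinates $\sprod{\lambda_i}{\varpi_\beta^\vee}$ for $i \in [d]$ may be chosen freely and independently of the other simple roots, and close with a strictly decreasing cyclic chain together with the Finis--Lapid regularity lemma. The difference is organizational. The paper argues per $\beta$ by direct case split: when $\beta \in \bigcap_i \Delta_0^{Q^H(w_i)}$ it sets all $d$ coordinates equal and cites \cite{FL11a}*{Lemma 2.2}; when $\beta$ is absent from some $\Delta_0^{Q^H(w_i)}$ it uses that index as the reset point of a decreasing cyclic chain, with Bourbaki nonnegativity of $\sprod{\lambda_{i'+1} - w_{i'+1}\lambda_{i'+1}}{\varpi_\beta^\vee}$ supplying the slack at the other indices. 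Your decomposition $\lambda_i = \mu + t\epsilon_i$ absorbs the regularity lemma into the bulk term $\mu - w_{i+1}^{-1}\mu$ and isolates the residual combinatorics into the set $S_\beta$; your observation that $S_\beta \subsetneq \Z/d\Z$ then does exactly the work of the paper's case hypothesis, guaranteeing a reset index for the remaining chain. The perturbation framing is somewhat cleaner --- it treats every $\beta$ uniformly and proves only the minimal system of inequalities --- at the modest cost of the final scaling $t \to 0^+$ needed to restore regularity of $\lambda_i$ and positivity at the indices where the $\mu$-term dominates; the underlying mathematics is the same.
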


\begin{proof}
	We will identify $d$ copies of `objects' of $H$ with the corresponding copies in $G$. For instance, suppose $w = (w_1, \cdots, w_d) \in W^G$ is given so that $w_i \in W^H$ for $i \in [d]$. We need to show the existence of $\lambda = (\lambda_1, \cdots, \lambda_d) \in (\aaa_0^G)^*$ so that $\lambda - \theta^{-1}w^{-1} \lambda$ is a positive linear combination of co-roots $\beta^\vee$ whenever $\beta^\vee \in \Delta_0^{Q^G(w)}$. There is no loss in generality to use $\theta, w$ instead of the notationally cumbersome $\theta^{-1}, w^{-1}$. Then, 
	\[ \lambda - \theta w\lambda = (\lambda_1 - w_2 \lambda_2, \lambda_2 - w_3 \lambda_3, \cdots, \lambda_{d-1} - w_d \lambda_d, \lambda_d - w_1 \lambda_1). \]
	Choosing coordinates $\sprod{\lambda_i}{\varpi_\beta^\vee}$ for each $i \in [d]$ and $\beta \in \Delta_0^H$ will define the vectors $\lambda_1, \cdots, \lambda_d \in ((\aaa_0^H)^*)^+$. Fix $\beta \in \Delta_0^H$; we have three cases. 
	\begin{itemize}
	
		\item Suppose there are $i, j \in [d]$ such that $\beta \in \Delta_0^{Q^H(w_j)}\setminus \Delta_0^{Q^H(w_i)}$. Choose 
	\[ \sprod{\lambda_{i+1}}{\varpi_\beta^\vee} > \sprod{\lambda_{i+2}}{\varpi_\beta^\vee} > \cdots > \sprod{\lambda_i}{\varpi_\beta^\vee}. \]
	For any $i' \in [d]\setminus \{i\}$, reading $i'+1$ modulo $d$, we have
	\[ \sprod{\lambda_{i'} - w_{i'+1} \lambda_{i'+1}}{\varpi_\beta^\vee} = \sprod{\lambda_{i'} - \lambda_{i'+1}}{\varpi_\beta^\vee} + \sprod{\lambda_{i'+1} - w_{i'+1}\lambda_{i'+1}}{\varpi_\beta^\vee} > 0.\]
	The former term is positive by the choice above and the latter is non-negative by \cite{MR1890629}*{Ch. VI \S 1.6 Proposition 18}. 
	
	\item Suppose that $\beta \in \cap_{i=1}^d \Delta_0^{Q^H(w_i)}$ then choose 
	\[ \sprod{\lambda_1}{\varpi_\beta^\vee} = \sprod{\lambda_2}{\varpi_\beta^\vee} = \cdots = \sprod{\lambda_d}{\varpi_\beta^\vee} >0. \]
	Since $\sprod{\lambda_{i-1} - w_i \lambda_i}{\varpi_\beta^\vee} = \sprod{\lambda_{i} - w_i \lambda_i}{\varpi_\beta^\vee},$ positivity follows from lemma 2.2 of \cite{FL11a} (whose proof is an application of loc.~cit). 
	
	\item Finally, if $\beta \not \in \cap_{i=1}^d \Delta_0^{Q^H(w_i)}$ then $\lambda_i$ can be choses such that $\sprod{\lambda_i}{\varpi_\beta^\vee}$ is positive. 
	
	\end{itemize}
\end{proof}

We could now assume that $G$ is almost-simple but since the RCL is a statement about the root system of $G$, we may assume that $G$ is simple. Additionally when $G$ is split, the statement reduces to the automorphisms of Dynkin diagrams of simple Lie algebras. The Dynkin diagrams for the families $B_n$ and $C_n$ as well as the exceptional ones $E_7, E_8, F_4, G_2$ have no nontrivial automorphisms. The $A$-type has a unique automorphism and so does the $D$-type when $n \neq 4$. $D_4$ has a nontrivial automorphism of order 3 whereas $E_6$ has an involution. We handle each of these cases below by explicitly constructing root cones for different elements $w \in W$. 

\subsection{Root Cone lemma for type $A_{n-1}$}

Following \cite{MR1153249}*{\S 15.1}, we can explicitly write a basis for the roots and weights as follows. 
The space $\aaa_0$ of roots is spanned by $L_1, \cdots, L_n$ such that $\sum_{i=1}^n L_i = 0$. Simple roots and weights can be respectively taken as
\[ \Delta_0 = \{ \alpha_1, \cdots, \alpha_{n-1} : \alpha_i = L_i - L_{i+1} \} \]
and 
\[ \hat \Delta_0 = \{ \varpi_1, \cdots, \varpi_{n-1} : \varpi_i = L_1 + \cdots + L_i \}. \]
A vector 
\[ \lambda = \sum_{i=1}^{n-1} a_i \varpi_i = \sum_{i=1}^{n-1} \left(\sum_{j=i}^{n-1} a_j\right) L_i =: \sum_{b=1}^{n-1} b_i L_i\]
is in the positive Weyl chamber precisely when $a_1, a_2, \cdots, a_{n-1} > 0$ or equivalently, if 
\[ b_1 > b_2 > \cdots > b_{n-1} >0. \]
We can and will write $\lambda$ as a sum $b_1 L_1 + \cdots + b_{n-1} L_{n-1} + b_n L_n$ with $b_n = 0$. 

The action of any $w \in W \cong S_n$ is by permuting the indices and $\theta_0^{-1} = \theta_0$ acts by $\theta_0(L_i) = - L_{n-i}$ which up to a sign, is the action of the long element $w_0 \in W$. Denoting by $\tau \in S_n$ the action of $w_0 w^{-1}$, we see that 
\[ \theta_0^{-1} w^{-1} \lambda = - w_0 w^{-1} \lambda = - \sum_{i=1}^n b_i L_{\tau(i)} = \sum_{i=1}^n -b_{\tau^{-1}(i)} L_i. \]
Thus, 
\[ \lambda - \theta_0^{-1} w^{-1} \lambda = \sum_{i=1}^n \left( b_i + b_{\tau^{-1}(i)} \right) L_i 
	= \sum_{i=1}^{n-1} (b_i + b_{\tau^{-1}(i)} - b_{\tau^{-1}(n)}) L_i \]
We now use the Cartan matrix to write this vector in terms of the roots. 
\[ 	Z = C U = \begin{pmatrix}
  		2 & -1 & 0 & \cdots & 0\\
		-1 & 2 & -1 & \cdots & 0 \\
		0 & \ddots & \ddots & \ddots & \vdots \\
		\vdots & & -1 & 2 & -1 \\
		0 & \cdots & 0 & -1 & 2
	\end{pmatrix} 
	\begin{pmatrix}
		 1  & 0  & 0  & \cdots  & 0  \\
		 1  & 1  & 0  & \cdots  & 0  \\
		 1  & 1 & 1  &   &  \vdots \\
		 \vdots   & \vdots  & \vdots  & \ddots  &  0 \\
		 1  & 1  &  1 & 1  & 1 
	\end{pmatrix} =
	\begin{pmatrix}
		1 & -1 & 0 & \cdots & 0 \\
		0 & 1 & -1 & \cdots & 0 \\
		\vdots & \vdots & \ddots & \ddots & \vdots \\
		&  &  &  1 & -1 \\
		1 & 1 & \cdots & 1 & 2 						
	\end{pmatrix}.
\]
Since $Z^{-1} L_i = \alpha_i$ for $i = 1, 2, \cdots, n-1$ we have

\[ \lambda - \theta_0^{-1} w^{-1} \lambda = \sum_{i=1}^{n-1} (b_i + b_{\tau^{-1}(i)} - b_{\tau^{-1}(n)}) L_i  = \sum_{i=1}^{n-1} c_i \alpha_i \]
where 
\begin{flalign*}
	c_i & = \displaystyle \frac{1}{n} \left[ (n-i) \sum_{j=1}^i (b_j + b_{\tau^{-1}(j)} - b_{\tau^{-1}(n)}) - i \sum_{j=i+1}^{n-1} (b_j +b_{\tau^{-1}(j)} -b_{\tau^{-1}(n)} ) \right] \\
		& = \displaystyle \frac{1}{n} \left[ n \sum_{j=1}^i (b_j + b_{\tau^{-1}(j)} - b_{\tau^{-1}(n)}) - i \sum_{j=1}^{n-1} (b_j +b_{\tau^{-1}(j)} -b_{\tau^{-1}(n)} ) \right] \\
		\therefore nc_i & = n(b_1 + \cdots + b_i) -i (b_1 \cdots + b_n) + n(b_{\tau^{-1}(1)} + \cdots b_{\tau^{-1}(i)}) - i (b_{\tau^{-1}(1)} + \cdots + b_{\tau^{-1}(n)}) \\
		& = n(b_1 + \cdots + b_i + b_{\tau^{-1}(1)} + \cdots + b_{\tau^{-1}(i)} ) -2i (b_1 + \cdots + b_n).
\end{flalign*}

Thus we need to be able to choose coefficients $b_i$ such that $c_i$ above is positive whenever $\alpha_i \in \Delta_0^{Q(w)}$. By \cite{FL11a}*{p. 787}, 
\[ \Delta_0^{Q(w)} = \{ \alpha \in \Delta_0 : w \varpi_\alpha^\vee \neq \varpi_\alpha^\vee \}, \]
which for $\GL(n)$ implies that $\alpha_i \in \Delta_0^{Q(w)}$ precisely if the permutation $\tau$ above satisfies $\tau([i]) \neq [i] + n-i, \text{ where } [n] := \{ 1, 2, \cdots, n\}$, cf. \cite{mathoverflow}. This is proven in the following 

\begin{lemma} \label{majer}
	For $n \geq 2$, fix a permutation $\tau \in S_n, \tau \neq (1,n)(2,n-1)\cdots$, i.e., $\tau$ isn't the long element. Let 
$$ \Delta(\tau) = \{ i \in [n-1] : \tau([i]) \neq \{ [i]+n-i \} \}.$$
Then there exist real numbers $b_1 > b_2 > \cdots > b_{n-1} > b_n = 0$ such that the inequalities 

\begin{equation}
	\frac{b_1 + b_2 + \cdots + b_i + b_{\tau^{-1}(1)} + \cdots + b_{\tau^{-1}(i)}}{2i} > \frac{b_1 + b_2 + \cdots + b_n}{n} 
\end{equation}
hold simultaneously for every $i \in \Delta(\tau)$.
\end{lemma}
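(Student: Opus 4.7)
The plan is to exploit a translation symmetry: replacing every $b_j$ by $b_j + c$ leaves both sides of the inequality unchanged, so $b_n = 0$ is a harmless normalization and the conclusion amounts to the feasibility of a finite system of strict linear inequalities on the open cone $\{b_1 > b_2 > \dots > b_{n-1} > 0\} \subset \R^{n-1}$. I would first test the canonical linear point $b^{(0)}_j = n - j$, which sits in the closure of this cone. The identity derived immediately above the lemma gives
\[
c_i^{(0)} \;=\; \tfrac12\, i(2n - i + 1) \;-\; \sum_{j \in \tau^{-1}([i])} j,
\]
which is nonnegative for every $i$ and vanishes precisely when $\tau^{-1}([i]) = \{n-i+1, \dots, n\}$. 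Hence the required strict inequality already holds at $b^{(0)}$ for every $i \in \Delta(\tau)$ outside the ``bad'' set
\[
B^{\mathrm{bad}} \;=\; \bigl\{\, i \in \Delta(\tau) : \tau(\{n-i+1, \dots, n\}) = [i]\,\bigr\}.
\]
Such an $i$ encodes a genuine asymmetry of $\tau$: the last $i$ indices are sent bijectively to $[i]$, yet $\tau([i]) \neq \{n-i+1, \dots, n\}$. In particular $i = n/2$ is never in $B^{\mathrm{bad}}$, because a bijection between the two halves in one direction forces it in the other, contradicting $i \in \Delta(\tau)$.

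To treat the bad indices I would perturb off the linear base: set $b = b^{(0)} + \varepsilon \delta$ with $\delta_n = 0$ and $\varepsilon > 0$ small enough to keep $b$ strictly decreasing. Since $c_i$ depends linearly on $b$, for $i \in B^{\mathrm{bad}}$ one has $c_i(b) = \varepsilon\, \ell_i(\delta)$, where
\[
\ell_i(\delta) \;=\; \sum_{j=1}^{i}\delta_j \;+\; \sum_{j=n-i+1}^{n-1}\delta_j \;-\; \tfrac{2i}{n}\sum_{j=1}^{n-1}\delta_j.
\]
The remaining indices in $\Delta(\tau) \setminus B^{\mathrm{bad}}$ take care of themselves for all sufficiently small $\varepsilon$ by continuity. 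The task therefore reduces to producing a single $\delta \in \R^{n-1}$ satisfying $\ell_i(\delta) > 0$ for every $i \in B^{\mathrm{bad}}$.

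The main obstacle is the simultaneous feasibility of these linear inequalities. I would settle it through Farkas' lemma: infeasibility would produce a nontrivial nonnegative relation $\sum_i \lambda_i \ell_i = 0$ among the functionals, and the hypothesis $\tau \neq w_0$ together with the combinatorial structure of $B^{\mathrm{bad}}$ (in particular its disjointness from $\{n/2\}$) should preclude such a relation. An alternative, more constructive route is to choose $b$ from a one-parameter family like $b_j = (n-j) + \alpha\, \phi(n-j)$ tailored to the asymmetry of $\tau$ and apply an intermediate-value argument in $\alpha$; the $\tau$-dependence of the perturbation is then encoded in the shape of $\phi$.
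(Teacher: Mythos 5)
Your perturbative framing is correct as far as it goes, and the computation of $c_i^{(0)}$ at the base point $b^{(0)}_j = n-j$ is right: it is nonnegative and vanishes exactly when $\tau^{-1}([i]) = \{n-i+1,\dots,n\}$, so the problem does reduce to finding $\delta \in \R^{n-1}$ (with $\delta_n = 0$) satisfying $\ell_i(\delta) > 0$ for $i \in B^{\mathrm{bad}}$. But this feasibility is the whole content of the lemma, and you do not establish it. Gordan's alternative would say the system is infeasible iff there is a nontrivial $\lambda \ge 0$ with $\sum_{i\in B^{\mathrm{bad}}}\lambda_i\,\ell_i = 0$; tracking the coefficient of each $\delta_j$ in that relation gives the step function $f(j)=\sum_{i\ge j}\lambda_i$ (supported on $B^{\mathrm{bad}}$) and forces $f(j)+f(n+1-j)=\sum\lambda_i$ for all $j$, which in turn forces the jump of $f$ at $j$ to equal the jump at $n-j$. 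Killing these relations therefore requires the structural fact that $B^{\mathrm{bad}}$ is disjoint from its reflection $n - B^{\mathrm{bad}}$, not merely that $n/2 \notin B^{\mathrm{bad}}$. You prove only the latter, which is too weak: it does not rule out, say, $\{2,4\}\subset B^{\mathrm{bad}}$ when $n=6$, and without disjointness the Farkas argument collapses.

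The missing disjointness is precisely the combinatorial identity that drives the paper's (Majer's) proof: since $\tau$ is a bijection, $i \in \Delta(\tau)$ if and only if $n-i \in \Delta(\tau^{-1})$. With this, $B^{\mathrm{bad}} = \Delta(\tau)\setminus\Delta(\tau^{-1})$ and $n - B^{\mathrm{bad}} = \Delta(\tau^{-1})\setminus\Delta(\tau)$, so the disjointness is immediate. The paper does not go through Farkas at all; it builds an explicit $b_j = a_j - 5j + c$ where the perturbation $a_j$ is a second difference of $\chi_\Delta$ chosen so that the partial sums of $a_j$ are exactly $\chi_\Delta(i) - \chi_{\Delta^{-1}}(i)$, making the two cases ($i\in\Delta\setminus\Delta^{-1}$ and $i\in\Delta\cap\Delta^{-1}$) close by direct computation. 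So: your route is in the same spirit as the paper (a good linear base plus a combinatorial correction), but as written it stops at the crucial step — the disjointness $B^{\mathrm{bad}} \cap (n - B^{\mathrm{bad}}) = \emptyset$ is unproven and the Farkas dual analysis is only gestured at. The alternative one-parameter $b_j = (n-j) + \alpha\,\phi(n-j)$ suggestion is likewise not a proof; the dependence of $\phi$ on $\tau$ is exactly what would have to be constructed.
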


\begin{proof}

For a given $\tau\in S_n$ and for any $j\in[n]$ define the numbers
$$a_j:=\chi_{\Delta}(j)-\chi_{\Delta}(j-1)-\chi_{\Delta }(n-j)+\chi_{\Delta}(n-j+1),$$
where $\chi_\Delta:\mathbb{Z}\to\{0,1\}$  denotes the characteristic function of  the set $\Delta:=\Delta(\tau)\subset\mathbb{Z}$. Also, with $c:=5n-a_n$, define
$$b_j:=a_j-5j+c.$$
We may note right away that since $|a_j|\le2$, the $b_j$ are strictly decreasing, and that $b_n=0$, by the choice of the constant $c$.

For any $E\subset[n]$, for simplicity of notation we put
 $$ \alpha(E):=\sum_{j\in E} a_j,\qquad \beta(E):=\sum_{j\in E} b_j$$
(so we may think $ \alpha$ and $\beta$ as discrete signed measures supported in $[n]$).

For  $i\in[n]$, summing over $j=1,\dots i$ we have
$$ \alpha([i])=\chi_\Delta(i)-\chi_\Delta(n-i).$$

Incidentally, for any $i\in[n]$ we have $i\in\Delta$ if and only if, by definition, $\tau([i])\neq[i]+n-i$ thus also, since $\tau$ is bijective,  if and only if $\tau([i]^c)\neq([i]+n-i)^c$, that is  $\tau([n-i]+i)\neq [n-i]$ or $\tau^{-1}([n-i])\neq [n-i]+i$, which means $n-i \in \Delta^{-1}:=\Delta(\tau^{-1})$. Hence the last formula also writes
$$ \alpha([i])=\chi_{\Delta}(i)-\chi_{\Delta^{-1}}(i).$$
Also note that, since $n\not\in\Delta$
$$ \alpha([n])=0,$$
and $$ \alpha([i]+n-i)=- \alpha([n-i]) =-\chi_\Delta(n-i)+\chi_\Delta(i)= \alpha([i]).$$

We proceed showing the inequalities on the arithmetic means. 

Case I. Assume $i\in\Delta\setminus\Delta^{-1}$. Then by definition of $\Delta^{-1}$,  $\tau^{-1}([i])=[i]+n-i$, so that

\[ \frac{\alpha([i])+ \alpha(\tau^{-1}[i])}{2i} = {\alpha([i])+ \frac{\alpha([i]+n-i)}{2i}} = \frac{\chi_{\Delta}(i)-\chi_{\Delta^{-1}}(i)}{i} = \frac{1}{i} > 0, \]
and summing the arithmetic means of $-5j+c$ on the same sets we have plainly
$$ \frac{\beta([i])+ \beta(\tau^{-1}[i])}{2i} > \frac{\beta([n])}{n}.$$

Case II. Assume $i\in\Delta\cap\Delta^{-1}$. Thus $\tau^{-1}([i])\neq[i]+n-i$ and, just because $b_j$ are strictly decreasing

$$ \frac{\beta([i])+ \beta(\tau^{-1}[i])}{2i} > \frac{\beta([i])+ \beta([i]+n-i)}{2i} $$
and since we have $\alpha([i])=\alpha([i]+n-i)=\alpha([n])=0$ because $\chi_{\Delta}(i)=\chi_{\Delta^{-1}}(i)=1$,  summing as before the arithmetic means of the affine part of $b_j$, $$ \frac{\beta([i])+ \beta([i]+n-i)}{2i} = \frac{\beta([n])}{n},$$
concluding the proof.

\end{proof}

\subsection{Root Cone lemma for type $D_\l$}

The root system $D_\l$ has a unique automorphism when $\l \neq 4$ which for $\l = 3$ coincides with the automorphism $\theta_0(x) = {}^t\!x^{-1}$ of $GL(4)$ (and this case has been proven in the previous section). The root system $D_4$ has an automorphism of order 3 which will be considered later. 

Following \cite{MR1890629} we assume the ambient space is spanned by the vectors $\{e_1, \cdots, e_\l\}$ and the roots are given by $\RRR_0 = \{ \pm e_i \pm e_j : 1 \leq i < j \leq \l \}$. For a base $\Delta_0$, we choose
\[ \Delta_0 = \{ \alpha_1 = e_1 - e_2, \cdots, \alpha_{\l - 1} = e_{\l - 1} - e_{\l}, \alpha_{\l} = e_{\l - 1} - e_{\l} \}.  \]
The corresponding dual basis of weights is
\[ \hat{\Delta}_0 = \{ \varpi_i = e_1 + \cdots + e_i : 1 \leq i \leq \l - 2 \} \cup \{ \varpi_{\l - 1} = \frac{1}{2}(e_1 + \cdots + e_{\l - 1} -e_\l), \varpi_\l = \frac{1}{2}(e_1 + \cdots + e_\l) \}. \]
Since the root system $D_\l$ is selfdual, the co-roots and co-weights are defined similarly. The Weyl group $W$ consists of even-signed permutations, i.e., any $w \in W$ is a pair $(\sigma, \eta)$ where $\sigma \in S_\l$ and $\eta = (\eta_1, \cdots, \eta_\l)$ is an ordered $\l$-tuple of $\pm 1$ with even $-1$'s. The action of $w$ on the basis is given by 
\[ w.e_i = \eta_i e_{\sigma(i)}. \]
The automorphism $\theta_0$ acts on $\Delta_0$ by permuting the set $\{ \alpha_{\l - 1}, \alpha_\l \}$ and fixing other roots and similarly on the weights in $\hat\Delta_0$. 

\[ \xymatrix{
	& & & & \bullet \ar @{}_{\alpha_\l} \\ 
	\bullet \ar @{-} [r] \ar@{}_{\alpha_1} & \bullet \ar @{-} [r] \ar@{}_{\alpha_2} & \cdots \ar @{-} [r]  & \bullet \ar @{-} [dr] _(0.13){\alpha_{\l-2}} \ar @{-} [ur] \\
	& & & & \bullet \ar @/_1pc/ @{<->} [uu] _{\theta_0} \ar @{}_{\alpha_{\l-1}}
} \]

To prove the Root Cone lemma for $D_\l$, we need to show that for fixed $w = (\sigma, \eta) \in W$ there exists an open cone $\Omega \subseteq (\aaa_0^*)^+$ such that if $\lambda \in \Omega$, the inequality 

\refstepcounter{equation}\label{eq:so2n}	

\begin{equation}
	\sprod{\lambda - \theta_0^{-1} w^{-1} \lambda}{\varpi_i^\vee} = \sprod{\lambda}{\varpi_i^\vee - w \theta_0 \varpi_i^\vee} > 0 
    \tag*{\myTagFormat{eq:so2n}{i}}\label{eq:so2n-i} 
\end{equation}

holds whenever $w \varpi_i^\vee \neq \varpi_i^\vee$. Assume that $\lambda = c_1 \varpi_1 + \cdots + c_\l \varpi_\l \in (\aaa_0^*)^+$ that is to say, $c_i > 0$ for all $i \leq \l$. Observe that 
\begin{multline} \label{lambdacoeff}
	\lambda = c_1 e_1 + c_2 (e_1 + e_2) + \cdots + c_{\l - 1} \left(e_1 + \cdots + e_{\l - 1} + 
			\frac{e_{\l - 1} + e_{\l}}{2}\right) \\ + c_\l \left(e_1 + \cdots + e_{\l - 2} + \frac{-e_{\l - 1} + e_\l}{2}\right) \\ 
			= (c_1 + \cdots + c_{\l - 2}) e_1 + \cdots + \left(c_{\l - 2} + \frac{c_{\l-1} + c_\l}{2}\right) e_{\l - 2} \\ + \left(\frac{c_{\l-1} + c_\l}{2}\right) e_{\l - 1} + \left(\frac{-c_{\l-1} + c_\l}{2}\right) e_\l, 
\end{multline}
so if $c_1, \cdots, c_\l > 0$ then the only possible negative coefficient above is that of $e_\l$. If $\theta_0$ fixes $\varpi_i^\vee$ then every $\lambda \in (\aaa_0^*)^+$ satisfies the Inequality~\myTagFormat{eq:so2n}{i}, as can be seen from \cite{FL11a}*{Lemma 2.2}. It suffices to prove these inequalities for $i = \l - 1, \l$. We first analyze the case $i = \l - 1$ and set $I(w) = \{ \sigma(i) : 1 \leq i \leq \l , \ \eta_i = -1 \}$. 

\begin{flalign*}
	\varpi_{\l - 1}^\vee - w \theta_0 \varpi_{\l - 1}^\vee & = \varpi_{\l - 1}^\vee - w \varpi_\l^\vee \\
		& = \frac{1}{2} (e_1^\vee + \cdots + e_{\l - 1}^\vee - e_\l^\vee) - \frac{1}{2}(\eta_1 e_{\sigma(1)}^\vee + \cdots + \eta_\l e_{\sigma(\l)}^\vee) \\
		& = \left( \sum_{ i \in I(w) } e_{\sigma(i)}^\vee \right) - e_\l^\vee .
\end{flalign*}

\begin{itemize}
	\item If $I(w) = \emptyset$ then $\eta = (1, \cdots, 1)$ and $\varpi_{\l - 1}^\vee - w\varpi_\l^\vee = -e_\l^\vee$ so to ensure the quantity $\sprod{\lambda}{\varpi_{\l - 1}^\vee - w\varpi_\l^\vee}$ is positive, by \cref{lambdacoeff} we can take $c_{\l - 1} > c_\l$. 
	
	\item If $I(w) \neq \emptyset$ and $\inf I(w) < \l - 1$, we can choose $c_i\gg c_{\l - 1}, \ c_i \gg c_\l$ for some $i \in I(w) \setminus \{ \l - 1, \l \}$ to get the positivity condition. 
	
	\item Finally if $\inf I(w) \geq \l - 1$ then $\eta$ must have exactly two sign changes and $I(w) = \{ \l - 1, \l \}$ in which case $\varpi_{\l - 1}^\vee - w\varpi_\l^\vee = e_{\l - 1}^\vee$ so every $\lambda \in (\aaa_0^*)^+$ satisfies Inequality~\myTagFormat{eq:so2n}{l-1}. 
\end{itemize}

The case $i = \l$ is slightly more involved. Observe that if $w \varpi_\l^\vee \neq \varpi_\l^\vee$ then $\eta \neq (1, \cdots, 1)$ where $w = (\sigma, \eta)$. Thus we can assume $I(w) \neq \emptyset$. 

\begin{flalign*}
	\varpi_\l^\vee - w \theta_0 \varpi_\l^\vee & = \varpi_\l^\vee - w\varpi_{\l - 1}^\vee \\
		& = \frac{1}{2}(e_1^\vee + \cdots + e_\l^\vee) - \frac{1}{2} (\eta_1 e_{\sigma(1)}^\vee + \cdots + \eta_{\l - 1} e_{\sigma(\l - 1)}^\vee - \eta_\l e_{\sigma(\l)}^\vee) \\
		& = \frac{1}{2} \left( e_1^\vee + \cdots + e_\l^\vee - \eta_1 e_{\sigma(1)}^\vee - \cdots - \eta_\l e_{\sigma(\l)}^\vee \right) + \eta_\l e_{\sigma(\l)}^\vee \\
		& = \eta_\l e_{\sigma(\l)}^\vee + \sum_{i \in I(w)} e_i^\vee. 
\end{flalign*}
Thus $\varpi_\l^\vee - w \theta_0 \varpi_\l^\vee$ is a positive linear combination of $\{e_i\}$ whenever $\eta_\l \neq -1$. If $\eta_\l = -1$ then $\eta_{i_0} = -1$ for some $i_0 \neq \l$;
\begin{flalign*}
	\varpi_\l^\vee - w\varpi_{\l - 1}^\vee & = - e_{\sigma(\l)}^\vee + e_{\sigma(\l)}^\vee + e_{\sigma(i_0)}^\vee + \sum_{i \in I(w) \setminus \{ i_0, \l \}} e_i^\vee \\
		& = e_{i_0}^\vee + \sum_{i \in I(w) \setminus \{ i_0, \l \}} e_i^\vee,
\end{flalign*}
which again is a positive linear combination of $\{e_i\}$'s. In either case, choosing $c_{\l - 1} < c_\l$ ensures that Inequality~\myTagFormat{eq:so2n}{l} holds. Observe that this choice is consistent with the choices in Inequality~\myTagFormat{eq:so2n}{l-1}. \qed

\subsection{The triality automorphism of $D_4$}

Assume that $\theta_0$ is the automorphism of $D_4$ permuting the set $\{ \alpha_1, \alpha_3, \alpha_4 \}$ cyclically as shown below. 

\[ \xymatrix{ 
	& & & \bullet \ar@{}_{\alpha_3} \ar @/^1pc/ [dd] ^{\theta_0} \\
	\bullet \ar @/^2pc/ [urrr] ^{\theta_0} \ar@{-} [rr] \ar@{}_(-0.5){\alpha_1} & & \bullet \ar@{}_(0){\alpha_2} \ar @{-} [dr] \ar @{-} [ur] & \\
	& & & \bullet \ar@{}_{\alpha_4} \ar @/^2pc/ [ulll] ^{\theta_0}
} \]

We need to find conditions on $c_1, c_2, c_3, c_4 > 0$ where 
\[ \lambda = \left( c_1 + c_2 + \frac{c_3 + c_4}{2} \right) e_1 + \left( c_2 + \frac{c_3 + c_4}{2} \right) e_2 + \left( \frac{c_3 + c_4}{2} \right) e_3 + \left( \frac{-c_3 + c_4}{2} \right) e_4 \]
so that 
\begin{enumerate}[(a)]
	\item \label{lambdaa} $\sprod{\lambda}{\varpi_4^\vee - w \varpi_1^\vee} > 0 $ whenever $w \varpi_4^\vee \neq \varpi_4^\vee$;
	\item \label{lambdab} $\sprod{\lambda}{\varpi_3^\vee - w \varpi_4^\vee} > 0 $ whenever $w \varpi_3^\vee \neq \varpi_3^\vee$; and 
	\item \label{lambdac} $\sprod{\lambda}{\varpi_1^\vee - w \varpi_3^\vee} > 0 $ whenever $w \varpi_1^\vee \neq \varpi_1^\vee$. 
\end{enumerate}
Let us analyze the inequality in each case. 
\begin{enumerate}[(a)]
	\item Observe that 
		\[ \varpi_4^\vee - w \varpi_1^\vee = \frac{1}{2}(e_1^\vee + e_2^\vee + e_3^\vee + e_4^\vee) - \eta_1 e_{\sigma(1)}^\vee. \]
    	\begin{itemize}
    		\item If $\eta_1 = -1$ then this vector is a positive combination of the $e_i^\vee$'s so $\sprod{\lambda}{\varpi_4^\vee - w \varpi_1^\vee} > 0$ can be ensured by choosing $c_3 < c_4$. 
    		\item If $\eta_1 = 1$ and $\sigma(1) \in \{ 2, 3, 4 \}$ then choosing $c_1 \gg c_3, c_4$ guarantees Inequality~(\ref{lambdaa}) holds. 
    		\item Finally if $\eta_1 = 1$ and $\sigma(1) = 1$ then
    			\[ \sprod{\lambda}{\varpi_4^\vee - w \varpi_1^\vee} = \sprod{\lambda}{\frac{1}{2}(-e_1^\vee + e_2^\vee + e_3^\vee + e_4^\vee)} = -c_1 + c_4. \]
    			Pick $c_1 < c_4$. Observe that if $\eta_1 = 1$ and $\sigma(1) = 1$ then $w \varpi_1^\vee = \varpi_1^\vee$ so Inequality~(\ref{lambdac}) need not be verified. 
    	\end{itemize}		
	\item Writing $\sprod{\lambda}{\varpi_3^\vee - w \varpi_4^\vee}$ as a sum of $\sprod{\lambda}{\varpi_4^\vee - w \varpi_4^\vee}$ (which is positive by \cite{FL11a}*{Lemma 2.2}) and $\sprod{\lambda}{\varpi_3^\vee - \varpi_4^\vee} = \sprod{\lambda}{-e_4^\vee} = \frac{-c_3 + c_4}{2}$, we can ensure Inequality~(\ref{lambdab}) holds if $c_3 < c_4$. 
	\item The Inequality~(\ref{lambdac}) is proven similarly;
		\[ \sprod{\lambda}{\varpi_1^\vee - w \varpi_3^\vee} = \sprod{\lambda}{\frac{1}{2}(e_1^\vee - e_2^\vee - e_3^\vee + e_4^\vee)} = \frac{c_1 - c_3}{2}, \]
		so choose $c_1 > c_3$. 
\end{enumerate}
It is easy to verify that choices made above are consistent with each other. \qed

\subsection{Root Cone lemma for type $E_6$}
	Following notations of \cite{MR1890629}, the automorphism of $E_6$ is shown below. 
	\[ \xymatrix{ 
	& & \bullet \ar@{}_{\alpha_2} & & \\
	\bullet \ar@{-}[r] \ar@{}_{\alpha_1} \ar@{<->}@/_3pc/[rrrr] & \bullet \ar@{-}[r] \ar@{}_{\alpha_3} \ar@{<->}@/_1.5pc/[rr]_{\theta_0} & \bullet \ar@{-}[r] \ar@{}_{\alpha_4} \ar@{-}[u] & \bullet \ar@{-}[r] \ar@{}_{\alpha_5} & \bullet \ar@{}_{\alpha_6} \ar@{}[d]\\
	& & & & 
	} \]

The {\tt SageMath} code below finds a point in the positive Weyl chamber satisfying \cref{RCLinequality} in \cref{RCL} thereby proving the set of solutions is nonempty and open. \\

\begin{Verbatim}
# Proof of the root cone conjecture for the (unique) automorphism of the Dynkin Diagram 
of E_6 using "SageMath".

R = RootSystem([`E', 6]);
X = R.root_space()
W = X.weyl_group()
alpha = X.basis()
alphacheck = X.coroot_space().basis()
varpi = X.fundamental_weights_from_simple_roots()
varpicheck = X.coroot_space().fundamental_weights_from_simple_roots()

def theta(vector):
    sigma = PermutationGroupElement(`(1,6)(3,5)(2)(4)')
    for i in range(1, len(varpi)+1):
        if vector == varpi[i]:
            break
    return varpi[sigma(i)]

def delta(w):
    list = []
    for item in varpi:
        if (w.action(item)).to_ambient() != item.to_ambient():
            list.append(item)
    return list

def rhs(w, vector):
    return (vector - w.action(theta(vector))).to_ambient()

def isPositive(Lambda, w):
    for vector in delta(w):
        if (Lambda.to_ambient()).dot_product(rhs(w, vector)) <= 0:
            return False
    return True

def isSuccess(w):
    for x_1 in range(1,4):
        for x_2 in range(1,4):
            for x_3 in range(1,4):
                for x_4 in range(1,4):
                    for x_5 in range(1,4):
                        for x_6 in range(1,4):
                            Lambda = x_1 * varpicheck[1] + x_2 * varpicheck[2] 
                            + x_3 * varpicheck[3] + x_4 * varpicheck[4] + 
                            x_5 * varpicheck[5] + x_6 * varpicheck[6]
                            if isPositive(Lambda, w):
                                return (Lambda, True)
    Lambda = 0 * varpicheck[1]
    return (Lambda, False)
    
count = 0
for w in W:
    if isSuccess(w)[1] == True:
        count = count + 1
    else:
        print "Root Cone Conjecture fails for w = ", w
print "Number of successful elements = ", count, "(Order of W =", W.cardinality(), ")"

\end{Verbatim}

	
\section{The spectral side} \label{section:spec}
	
	We begin by reviewing the spectral side of the twisted trace formula. 	
	
	\subsection{Twisted $(G,M)$-families}
		
		Recall the definition of a $(G,M)$-family \cite{MR625344}*{\S 6}, namely a collection 
		\[ c(\Lambda, P), \ P \in \PPP(M), \Lambda \in i\aaa_M^* \]
		of smooth functions is called a $(G, M)$-family if 
		\[ c(\Lambda, P) = c(\Lambda, P') \]
		for any pair $P, P' \in \PPP(M)$ of adjacent groups and any point $\Lambda$ in the hyperplane spanned by the common wall of the chambers corresponding to $P$ and $P'$. It is well-known that a $(G,M)$-family $c(\Lambda, P)$ gives naturally a smooth function of $\Lambda$ as 
		\[ c_M(\Lambda) = c_M^G(\Lambda) := \sum_{P \in \PPP(M)} \epsilon_P(\Lambda) c(\Lambda, P). \]
		For a definition of $\epsilon_P(\Lambda)$, see \cref{epsilon} in \cref{section:notation}. A $(G,M)$-family $c(\Lambda, P)$ also gives a $(\tild G, \tild M)$-family 
		\[ \{ c(\Lambda, \tild P) : \; \Lambda \in i \aaa_{\tild P}^*, \ \tild P \in \PPP(\tild M)\} \]
		by restricting $\Lambda \in i \aaa_P^*$ to the subspace $i \aaa_{\tild P}^*$. The corresponding smooth function is given by 
		\[ c_{\tild M}(\Lambda) = \sum_{\tild P \in \PPP(\tild M)} \epsilon_{\tild P}(\Lambda) c(\Lambda, \tild P). \]

	\subsection{Intertwining operators} 
	
	Let $P$ and $Q$ be associated parabolic subgroups. This means that the set $W(\aaa_P, \aaa_Q)$ of isomorphisms from $\aaa_P$ to $\aaa_Q$ arising from restrictions of elements of $W$ is non-empty. Fix $w \in W(\aaa_P, \aaa_Q)$ and $\nu \in \aaa_{P,\C}^*$. The intertwining operator $M_{Q|P}(w,\nu)$ is defined by 
	\[ M_{Q|P}(w,\nu)\Phi(x) = \exp -\sprod{w\nu+\rho_Q}{H_Q(x)} \int_{N_{w,P,Q}(\A)} \Phi(n_w^{-1}nx) \exp \sprod{\nu+\rho_P}{H_P(n_w^{-1}nx)} \d n, \]
	where $N_{w,P,Q} = N_Q \cap n_w N_P n_w^{-1} \bs N_Q$. It is an operator from $\AAA(X_P)$ to $\AAA(X_Q)$ which maps the subspace $\AAA(X_P, \sigma)$ to $\AAA(X_Q, \theta \circ \sigma)$. The intertwining operator $M(w, \lambda)$ defined in \cref{easyintertwining} corresponds to $P = Q = P_0$ is just a special case of this one. As remarked before, the integral converges only for $\Re(\nu)$ in a certain chamber but $M_{P|Q}(w, \nu)$ can be analytically continued to a meromorphic function of $\nu \in \aaa_{P,\C}^*$. Set $M_{Q|P}(\nu) = M_{Q|P}(1,\nu)$. 
	
	For $P, P_1 \in \PPP(M)$ and $\nu, \Lambda \in i\aaa_P^*$, the collection 
	\[ \MMM(P, \nu; \Lambda, P_1) := M_{P_1|P}(\nu)^{-1} M_{P_1|P}(\nu + \Lambda), \quad P_1 \in \PPP(M), \Lambda \in i\aaa_M^* \]
	is a $(G,M)$-family with corresponding smooth function of $\Lambda$ given by 
	\[ \MMM_M(P, \nu; \Lambda) = \MMM_M^G(P, \nu; \Lambda) := \sum_{P_1 \in \PPP(M)} \epsilon_{P_1}(\Lambda) \MMM(P, \nu; \Lambda, P_1). \]
	As discussed above, we have the associated $(\tild G, \tild M)$-family $\MMM(P, \nu; \Lambda, \tild{P_1}), \ \tild{P_1} \in \PPP(\tild M)$ and the smooth function $\MMM_M(P, \nu; \Lambda)$ where $\Lambda$ belongs to the subspace $i\aaa_{\tild M}^*$ of $i \aaa_M^*$. Set 
	\[ \MMM_{\tild M}(P, \nu) = \MMM_{\tild M}(P, \nu; 0). \]
	
	It is one of the basic properties of $(G,M)$-families that the limit of $\MMM_M(P,\nu; \Lambda)$ as $\Lambda \to 0$ exists. We are now ready to state the twisted trace formula for a test function $h \in \CCC_c^\infty(\tild G(\A))$ as in \cite{LW}*{Theor\`eme 14.3.3}. 
	
	\begin{equation}\label{eq:spectral}
		J^{\tild G}(h, \omega) = \sum_{\tild L \in \LLL^{\tild G}} \frac{\mod{\tild{W}^L}}{\mod{\tild{W}^G}} \frac{1}{\mod{\det{(\theta_L - 1|_{\aaa_M^G/\aaa_{\tild L}^{\tild G}})}}} J_{\tild L}^{\tild G}(h, \omega), 
	\end{equation}
	where 
	\[		J_{\tild L}^{\tild G}(f, \omega) = \sum_{M \in \LLL^L} \frac{\mod{W^M}}{\mod{W^L}} \sum_{\tild w \in W^{\tild L}(M)_{\text{reg}}} \frac{1}{\mod{\det{(\tild w - 1|_{\aaa_M^L})}}} J_{M, \tild w}^{\tild G}(h, \omega),
	\]
	where finally, 
	\[ J_{M, \tild w}^{\tild G}(h, \omega) = \int_{i (\aaa_{\tild L}^{\tild G})^* } \trace\left(\MMM_{\tild L}^{\tild G}(P, \nu) M_{P|\tild w P}(0) \rho_{P, \text{disc}, \nu}(\tild w, h, \omega) \right) \d \nu. \]
	
	The previous expression can also be written as 
	\[ J_{M, \tild w}^{\tild G}(h, \omega) = \sum_{\sigma \in \Pi_\text{disc} (M)} \int_{i (\aaa_{\tild L}^{\tild G})^* } \trace\left(\MMM_{\tild L}^{\tild G}(P, \nu) M_{P|\tild w P}(0) \rho_{P, \sigma, \nu}(\tild w, h, \omega) \right) \d \nu. \]	
	
	The absolute convergence of the spectral side would imply that the distribution $J^{\tild G}(f, \omega)$ extends continuously to $\CCC(\tild G(\A), K)$. We prove it by adopting the method of \cite{FL11b} and \cite{FLM} for the twisted trace formula. In the regular (non-twisted) trace formula, $\MMM_L(P, \nu), M_{P|w.P}(0)$ and $\rho_{P, \nu}(h)$ are operators on the space $\AAA(X_P)$ and the trace of their composition is integrated over the parameter $\nu$. However, the twisted regular representation $\rho_{P, \text{disc}, \nu}(\tild w, f, \omega)$ maps vectors in $\AAA(X_P)$ into those in $\AAA(X_Q)$ where $Q = \theta(P) = \Ad(\tild w)(P)$. The intertwining operator $M_{P|Q}(\nu)$ does the opposite, hence $M_{P|Q}(\nu) \circ \rho_{P, \text{disc}, \nu}(\tild w, f, \omega)$ is an operator on $\AAA(X_P)$. We define a unitary operator $\myshift = \myshift_{P, \nu}(\tild w)$ which satisfies the lemma below. 
	
	\[	\myshift_{P, \nu}(\tild w) : \ \AAA(X_P) \to \AAA(X_Q) \]
	\[ (\myshift_{P, \nu}(\tild w)\Phi)(x) = \exp{-(\sprod{\tild w \nu + \rho_Q}{H_Q(x)})} \Phi(n_{\tild w}^{-1} x n_{\tild w}) \exp{(\sprod{\nu + \rho_P}{H_P(n_{\tild w}^{-1} x n_{\tild w})})}. \]
	Recall above that $n_{\tild w}$ is the representative of $\tild w$ in $\tild G(\Q)$. 
	
	\begin{lemma} \label{lemma:myshift}
		\begin{enumerate}
			\item For any $y \in \tild G(\A), y = n_{\tild w}g $ with $g \in G(\A)$, we have
		\[ \rho_{P, \text{disc}, \nu}(\tild w, y, \omega) = \underline{\omega} \ \myshift_{P, \nu}(\tild w) \ \rho_{P, \nu}(g). \]
				Here, $\underline\omega$ is the operator $(\underline\omega \Phi)(x) = w(x) \Phi(x)$. 
		
			\item \[ \rho_{P, \text{disc}, \nu}(\tild w, f, \omega) = \underline{\omega} \ \myshift_{P, \nu}(\tild w) \ \rho_{P, \nu}(h), \]
			where $h = L_{w^{-1}}f$. 
			
			\item The operator $\myshift_{P, \nu}(\tild w)$ is unitary and invertible. 
		\end{enumerate}
	\end{lemma}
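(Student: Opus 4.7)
The proof is a systematic unwinding of the three definitions, with Part (3) being the most substantive. I would proceed as follows.

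For Part (1), the plan is direct computation. Starting from the definition of $\rho_{P,\nu}(g)$, one has
\[ (\rho_{P,\nu}(g)\Phi)(x') = \Phi(x'g)\exp\sprod{\nu+\rho_P}{H_P(x'g)-H_P(x')}. \]
Applying $\myshift_{P,\nu}(\tild w)$ sets $x' = n_{\tild w}^{-1}xn_{\tild w}$ and introduces the factor $\exp(-\sprod{\tild w\nu+\rho_Q}{H_Q(x)})\exp\sprod{\nu+\rho_P}{H_P(n_{\tild w}^{-1}xn_{\tild w})}$; the $H_P(n_{\tild w}^{-1}xn_{\tild w})$ piece coming from $\rho_{P,\nu}(g)$ cancels, and since $y=n_{\tild w}g$ we have $n_{\tild w}^{-1}xn_{\tild w}g=n_{\tild w}^{-1}xy$. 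Multiplying by $\underline\omega$ and using $\tild w(\nu+\rho_P)=\tild w\nu+\rho_Q$ gives exactly the definition of $\rho_{P,\text{disc},\nu}(\tild w,y,\omega)$ acting on $\Phi$.

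For Part (2), the plan is to integrate Part (1) against $f$, then apply the measure identity from the notation section, $\int_{\tild G(\A)}F(y)\,dy=\int_{G(\A)}F(\delta g)\,dg$, with $\delta=n_{\tild w}\in\tild G(\Q)$. This converts the integral $\int_{\tild G(\A)}f(y)\rho_{P,\text{disc},\nu}(\tild w,y,\omega)\,dy$ into $\int_{G(\A)}f(n_{\tild w}g)\,\underline\omega\,\myshift_{P,\nu}(\tild w)\,\rho_{P,\nu}(g)\,dg$. Since $\underline\omega\,\myshift_{P,\nu}(\tild w)$ is independent of $g$, it pulls outside the integral, leaving $\int_{G(\A)}h(g)\rho_{P,\nu}(g)\,dg=\rho_{P,\nu}(h)$, where $h(g)=f(n_{\tild w}g)=(L_{w^{-1}}f)(g)$ per the definition recalled in \cref{bijectionlemma}.

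For Part (3), the main ingredient is the identity
\[ H_P(n_{\tild w}^{-1}xn_{\tild w})=\tild w^{-1}H_Q(x), \]
valid for a choice of representative $n_{\tild w}$ compatible with $\K$ (so that the Iwasawa decomposition intertwines correctly), together with $\tild w^{-1}\rho_Q=\rho_P$. For $\nu\in i\aaa_P^*$ the two exponentials in the definition of $\myshift_{P,\nu}(\tild w)$ are unimodular, and
\[ \sprod{\rho_P}{H_P(n_{\tild w}^{-1}xn_{\tild w})}=\sprod{\tild w\rho_P}{H_Q(x)}=\sprod{\rho_Q}{H_Q(x)}, \]
so $|(\myshift_{P,\nu}(\tild w)\Phi)(x)|=|\Phi(n_{\tild w}^{-1}xn_{\tild w})|$. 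The map $x\mapsto n_{\tild w}^{-1}xn_{\tild w}$ is an automorphism of $G(\A)$ preserving Haar measure (since $G$ is unimodular), and it descends to a bijection $X_Q\to X_P$ because $\Ad(n_{\tild w}^{-1})$ sends $A_Q,N_Q,M_Q$ to $A_P,N_P,M_P$. A change of variables then yields $\|\myshift_{P,\nu}(\tild w)\Phi\|_{L^2(X_Q)}=\|\Phi\|_{L^2(X_P)}$. Invertibility follows from the explicit inverse $\myshift_{Q,\tild w\nu}(\tild w^{-1})$, which is well-defined by the same formula with $P$ and $Q$ swapped; a short computation using the same $H$-identity shows the composition is the identity. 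The only delicate point I anticipate is verifying the $H_P$–$H_Q$ compatibility under $\Ad(n_{\tild w})$, which comes down to a careful choice of representative in $\K$ and is implicit in the set-up of admissible maximal compacts relative to $M_0$.
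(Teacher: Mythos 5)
Since the paper's proof of this lemma is the single sentence ``This follows from the definitions and \cref{bijectionlemma},'' your detailed verification is necessarily more explicit than anything the paper offers, and the overall architecture you lay out is the natural one: cancel the shared $\exp\sprod{\nu+\rho_P}{H_P(n_{\tild w}^{-1}xn_{\tild w})}$ factor in Part (1), transport the measure and pull the $y$-independent operators outside the integral in Part (2), and use $H$-compatibility together with unimodularity in Part (3). Part (3) is sound modulo the identity $H_P(n_{\tild w}^{-1}xn_{\tild w})=\tild w^{-1}H_Q(x)$ for a representative $n_{\tild w}$ compatible with $\K$, a point you flag yourself.

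There is, however, a gap in how Parts (1)--(2) handle the character $\omega$. Matching your computed right-hand side $\omega(x)\exp(-\sprod{\tild w\nu+\rho_Q}{H_Q(x)})\Phi(n_{\tild w}^{-1}xy)\exp\sprod{\nu+\rho_P}{H_P(n_{\tild w}^{-1}xy)}$ against the paper's definition of $\tild\rho_{P,\sigma,\nu}(\delta,y,\omega)$ requires $(\omega\Phi)(\delta^{-1}xy)=\omega(x)\Phi(\delta^{-1}xy)$, i.e., $\omega(\delta^{-1}xy)=\omega(x)$. With $\delta=n_{\tild w}$ and $y=n_{\tild w}g$ this reads $\omega(n_{\tild w}^{-1}xn_{\tild w})\,\omega(g)=\omega(x)$, which cannot hold for all $g\in G(\A)$ unless $\omega$ is trivial (and also requires $\omega\circ\Ad(n_{\tild w}^{-1})=\omega$). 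So either the function $h$ in Part (2) should carry the missing factor (something like $h=\omega\cdot L_{w^{-1}}f$), or the notation $(\omega\Phi)$ in the paper's definition of $\tild\rho$ must be parsed as something other than the naive pointwise product; you should resolve this against the Labesse--Waldspurger conventions rather than assert the identity. You also silently correct an apparent sign discrepancy in the paper's definition of $\tild\rho_{P,\sigma,\nu}$ --- the exponent $\exp\sprod{\theta(\nu+\rho_P)}{H_Q(x)}$ there needs a minus sign to match the minus sign in the definition of $\myshift_{P,\nu}(\tild w)$ and the pattern of $\rho_{P,\nu}$. That correction is almost certainly right, but it is worth stating explicitly rather than applying implicitly.
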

	
	\begin{proof}
		This follows from the definitions and \cref{bijectionlemma}. 
	\end{proof}
	
	\begin{theorem} \label{spectral}
		For any $f \in \CCC^\infty_c(\tild G(\A))$, the spectral side of the twisted trace formula is given by \eqref{eq:spectral}. In this equation the sums are finite (except the one over $\sigma \in \Pi_\text{disc} (M)$) and the integrals are absolutely convergent with respect to the trace norm, hence define distributions on $\CCC(\tild G(\A), K)$. 
	\end{theorem}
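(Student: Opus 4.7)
The plan is to reduce absolute convergence of the twisted spectral expansion to the non-twisted estimates of Finis, Lapid, and M\"uller \cite{FL11b, FLM} by means of the unitary shift operator $\myshift_{P, \nu}(\tild w)$ introduced in \cref{lemma:myshift}. This parallels the strategy on the geometric side, where \cref{bijectionlemma} was used to convert twisted test functions into non-twisted ones before invoking the results of \cite{FL11a, FL16}.

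First, I would substitute the factorization $\rho_{P, \text{disc}, \nu}(\tild w, f, \omega) = \underline{\omega} \, \myshift_{P, \nu}(\tild w) \, \rho_{P, \nu}(h)$, with $h := L_{w^{-1}}f$, into the integrand of $J_{M, \tild w}^{\tild G}(f, \omega)$, so that the trace becomes
\[ \trace\bigl(\MMM_{\tild L}^{\tild G}(P, \nu) \, M_{P|\tild w P}(0) \, \underline{\omega} \, \myshift_{P, \nu}(\tild w) \, \rho_{P, \nu}(h)\bigr). \]
Since $\underline{\omega}$, $\myshift_{P, \nu}(\tild w)$, and $M_{P|\tild w P}(0)$ are unitary (the first two by \cref{lemma:myshift}, the third as an intertwining operator at the imaginary parameter $\nu = 0$), the trace norm is bounded above by a constant multiple of $\| \MMM_{\tild L}^{\tild G}(P, \nu) \, \rho_{P, \nu}(h) \|_{\text{tr}}$, with the constant independent of both $\nu$ and $\sigma \in \Pi_{\text{disc}}(M)$.

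Next, since $\MMM_{\tild L}^{\tild G}(P, \nu)$ is obtained from the non-twisted $(G,M)$-family $\MMM(P, \nu; \Lambda, P_1)$ by restricting $P_1$ to $\PPP(\tild M)$ and $\Lambda$ to the subspace $i\aaa_{\tild M}^*$, the standard convex-polytope integral representation of $(G,M)$-family smooth functions expresses $\MMM_{\tild L}^{\tild G}(P, \nu)$ as an average of derivatives of the intertwining operators $M_{P_1|P}(\nu)^{-1} M_{P_1|P}(\nu + \Lambda)$ along directions in $i\aaa_{\tild M}^*$. After this reduction, the required trace-norm bounds follow from the main technical estimates of \cite{FL11b, FLM} on derivatives of normalized intertwining operators composed with $\rho_{P, \nu}(h)$, yielding absolute convergence of $\sum_\sigma \int_{i(\aaa_{\tild L}^{\tild G})^*} \| \MMM_{\tild L}^{\tild G}(P, \nu) \rho_{P, \nu}(h) \|_{\text{tr}} \, d\nu$ in terms of a continuous seminorm $\mu(h)$ on $\CCC(G(\A), K)$. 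By \cref{bijectionlemma}, $\mu(h) = \mu(L_{w^{-1}}f)$ agrees with a continuous seminorm of $f$ on $\CCC(\tild G(\A), K)$, giving the desired continuity of the twisted spectral distributions. The finiteness of the outer sums over $\tild L \in \LLL^{\tild G}$, $M \in \LLL^L$, and $\tild w \in W^{\tild L}(M)_{\text{reg}}$ is immediate since each index set is finite.

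The main obstacle will be verifying that restricting the $(G,M)$-family to the $\theta_0$-invariant subspace $i\aaa_{\tild M}^*$ preserves the delicate trace-norm estimates of \cite{FLM}: one must check that derivatives along $i\aaa_{\tild M}^*$ of the normalized intertwining operators inherit the requisite polynomial-in-$\nu$ bounds from the corresponding derivatives along the full space $i\aaa_M^*$, and that the singularities of $M_{P|\tild w P}(\nu)$ on root hyperplanes do not interfere with the evaluation at $\nu = 0$, for which the regularity assumption $\tild w \in W^{\tild L}(M)_{\text{reg}}$ is essential.
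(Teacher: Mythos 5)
Your proposal follows essentially the same route as the paper: factor $\rho_{P,\text{disc},\nu}(\tild w, f, \omega)$ through the unitary shift operator $\myshift_{P,\nu}(\tild w)$ using \cref{lemma:myshift}, absorb the resulting unitary factor $\UUU = M_{P|\tild w P}(0)\,\underline\omega\,\myshift_{P,\nu}(\tild w)$, and reduce to the non-twisted trace-norm estimates of \cite{FL11b, FLM}. Your observation that $\MMM_{\tild L}^{\tild G}(P,\nu)$ is controlled by derivatives of intertwining operators along $i\aaa_{\tild M}^*$ corresponds to the paper's invocation of \cite{FLM}*{Theorem 2}, which expands $\MMM_L(P,\nu)$ into finite sums of compositions of intertwining operators and their first-order derivatives, and your appeal to \cref{bijectionlemma} to transfer seminorms is exactly what the paper does.

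One step, however, is stated imprecisely in a way that would not survive scrutiny: you claim that since $\UUU$ is unitary, the trace norm of $\MMM_{\tild L}^{\tild G}(P,\nu)\,\UUU\,\rho_{P,\nu}(h)$ is bounded by a constant multiple of $\|\MMM_{\tild L}^{\tild G}(P,\nu)\,\rho_{P,\nu}(h)\|_{\text{tr}}$. This inequality is false in general when the unitary is \emph{sandwiched} rather than conjugating — e.g.\ if $\MMM$ and $\rho(h)$ are rank-one projections onto orthogonal lines and $\UUU$ swaps them, the right-hand side vanishes while the left does not. What unitarity of $\UUU$ actually buys is $\|\MMM\,\UUU\,\rho(h)\|_1 \leq \|\MMM\|_{\text{op}}\,\|\rho(h)\|_1$, and it is this product (not the trace norm of the non-twisted composite) that must be integrated. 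The paper handles this correctly by first decomposing $\AAA(X_P)$ into finite-dimensional $\sigma$-isotypic, $\tau$-isotypic, $K$-invariant blocks, bounding trace norm by dimension times operator norm on each block, and then replacing $h$ by a power of $\Delta = \Id - \Omega + 2\Omega_{\K_\infty}$ as in \cite{FLM}*{\S 5.1}, after which the unitary $\UUU$ drops out because only operator norms remain. Your argument reaches the same destination, but the intermediate reduction needs to be rephrased in terms of $\|\MMM\|_{\text{op}}\,\|\rho(h)\|_1$ (or the $K$-type block estimate) rather than $\|\MMM\rho(h)\|_1$.
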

	
	\begin{proof}
	
		We fix $\tild L \in \LLL^{\tild G}, M \in \LLL^L, \tild w \in W^{\tild L}(M)_{\text{reg}}$ and show that the integral 
		\[ \int_{i (\aaa_{\tild L}^{\tild G})^*} \norm{\MMM_{\tild L}^{\tild G}(P, \nu) M_{P|\tild w P}(0) \rho_{P, \text{disc}, \nu}(\tild w, h, \omega)} \d \nu \]
		converges, where $\norm{\circ}$ denotes the trace norm of the operator on the space $\overline \AAA(X_P)$. Since the operator $\MMM_{\tild L}^{\tild G}(P, \nu)$ equals $\MMM_L^G(P, \nu)$ on the subspace $i\aaa_{\tild L}$, hence it decomposes into finite sums of the composition of intertwining operators and their first-order derivatives. Referring to notations therein, we recall 
		\begin{theorem}\cite{FLM}*{Theorem 2}
			Let $M \in \LLL, P \in \PPP(M), L \in \LLL(M), m = a_L^G$ and $\underline \mu \in (\aaa_M^*)^m$ be in general position. Then we have 
			\[ \MMM_L(P, \lambda) = \sum_{\underline \beta \in \mathcal B_{P, L}} \Delta_{\mathcal X_{L, \underline{\mu}}(\underline \beta)}(P, \lambda). \] 
		\end{theorem}
		Since the sum is over a finite set, it suffices to prove the convergence, for a fixed $m$-tuple $\mathcal X$ of parabolic subgroups, of 
		\[ \int_{i (\aaa_{\tild L}^{\tild G})^*} \norm{\Delta_\mathcal X(P, \nu) M_{P|\tild w P}(0) \rho_{P, \text{disc}, \nu}(\tild w, h, \omega)} \d \nu. \]
		By \cref{lemma:myshift}, we can write this as 
		\[ \int_{i (\aaa_{\tild L}^{\tild G})^*} \norm{\Delta_\mathcal X(P, \nu) M_{P|\tild w P}(0) \circ \underline \omega\  \myshift_{P, \nu}(\tild w) \rho_{P, \nu}(h)} \d \nu. \]
		If we denote the composite operator $M_{P|\tild w P}(0) \, \underline \omega\  \myshift_{P, \nu}(\tild w)$ by $\UUU$ for convenience, we see that the resulting expression 
		\[ \int_{i (\aaa_{\tild L}^{\tild G})^*} \norm{\Delta_\mathcal X(P, \nu) \ \UUU \ \rho_{P, \nu}(h)} \d \nu \]
		resembles that in \cite{FLM}*{Theorem 3}. The theorem follows by imitating \cite{FLM}*{\S5.1} which we now elaborate. We have the algebraic decomposition 
		\[ \AAA(X_P) = \bigoplus_{\sigma \in \Pi_{\text{disc}}(M(\A))} \AAA(X_P, \sigma) \]
		where $\AAA(X_P, \sigma)$ is the $K$-finite part of $\overline \AAA(X_P, \sigma)$. We further decompose 
		\[ \AAA(X_P, \sigma) = \oplus_{\tau \in \hat \K_\infty} \AAA(X_P, \sigma)^\tau \]
		according to the isotypical subspaces for the action of $\K_\infty$. Let $\AAA(X_P, \sigma)^K$ be the subspace of $K$-invariant functions in $\AAA(X_P)$, and similarly for $\AAA(X_P)^{\tau, K}$ for any $\tau \in \hat \K_\infty$. The above integral reduces to 
		\[ \sum_{\sigma \in \Pi_{\text{disc}}(M(\A))} \sum_{\tau \in \hat \K_\infty} \int_{i (\aaa_{\tild L}^{\tild G})^*} \norm{\Delta_\mathcal X(P, \nu) \ \UUU \ \rho_{P, \nu}(h)}\bigg|_{\AAA(X_P, \sigma)^{\tau, K}} \d \nu. \]

		Observe that $\UUU$ restricts to a unitary operator on the finite dimensional space $\overline \AAA(X_P, \sigma)^{\tau, K}$ of $K$-fixed vectors in the $\sigma$-isotypical subspace of $\overline \AAA(X_P)$ which transform according to $\tau \in \hat \K_\infty$ under $\K_\infty$. The operator norm of the composition of operators is controlled by the norms of the operators. Using this trick in \cite{FLM}*{\S 5.1}, we can replace the test function $h$ by a high enough exponent of the operator $\Delta = \Id - \Omega + 2\Omega_{\K_\infty}$. Replacing $\Delta_{\mathcal X}(P, \nu)$ with its expansion, the integrals equals a constant multiple of 
		\begin{multline*}
			\sum_{\sigma \in \Pi_{\text{disc}}(M(\A))} \sum_{\tau \in \hat \K_\infty} \int_{i (\aaa_{\tild L}^{\tild G})^*} \norm{ M_{P_1|P}(\nu)^{-1} \delta_{P_1| P_1'}(\nu) M_{P_1'|P_2}(\nu) \\ \dots \delta_{P_{m-1}|P_{m-1}'}(\nu) M_{P_{m-1}'|P_m}(\nu) \delta_{P_m|P_m'}(\nu) M_{P_m'|P}(\nu) \ \UUU \ \rho_{P, \nu}(\Delta^{2k})}\bigg|_{\AAA(X_P, \sigma)^{\tau, K}} \d \nu
		\end{multline*}
		which can be simplified to 
		\[ \sum_{\tau \in \hat \K_\infty} \sum_{\sigma \in \Pi_{\text{disc}}(M(\A))} \dim(\AAA(X_P, \sigma)^{\tau, K}) \int_{i (\aaa_{\tild L}^{\tild G})^*} \mod{\mu(\sigma, \nu, \tau)}^{-2k} \prod_{i=1}^m \norm{\delta_{P_i|P_i'}(\nu) \big|_{\AAA(X_P, \sigma)^{\tau, K}} } \d \nu. \]
		Now we can proceed according to \cite{FLM}*{\S5.1}. 
		
	\end{proof}


\section{An Application} \label{section:application}

In this section we discuss an application of the convergence of the spectral side to give a geometric expression for a sum of residues of poles of certain Rankin-Selberg $L$-functions, under the following assumption. 

\begin{assumption} \label{assump}
	The Root Cone Lemma holds for the cyclic base change case. 
\end{assumption}

Suppose $E/F$ is a Galois extension of number fields and $G = GL(n)$. Assume that the Galois group $\Gamma$ is generated by two elements $\theta$ and $\sigma$. For proving functoriality for base change this can be assumed without loss in generality, cf. \cite{MR2977357}*{\S 7}. If $w$ is finite, let $K_w = G(\mathcal O_{E_w})$ and $K_w = O(n, E_w)$ otherwise. Then $\K = \prod_w K_w$ is a ``good'' maximal compact subgroup in the sense of \cite{MR625344}. Denote the set of Archimedean places of $E$ by $\infty$.  For each $w \in \infty$, let $\phi_w$ be a smooth function on $\tild G(E_w)$ of compact support and bi-invariant under $K_w$, and set $\phi_\infty = \prod_{w \in \infty} \phi_w$. A cuspidal automorphic representation $\pi$ of $GL(n, \A_E)$ decomposes as $\pi = \otimes_w \pi_w$ and we can form the partial Rankin-Selberg $L$-function $L^\infty(s, \pi \times \tild \pi^\sigma)$ where $\tild \pi$ is the contragradient of $\pi$ and $\pi^\sigma(g) = \pi(g^\sigma)$. 

\begin{theorem}
	Suppose that \cref{assump} holds. With notations as above, we have that the sum of residues 
	\begin{equation} \label{eq:application}
		\sum_{\pi \simeq \pi^\Gamma} \prod_{w | \infty} \trace \pi_w(\phi_w) \Res_{s = 1} \displaystyle L^\infty(s, \pi \times \tild \pi^\sigma)
	\end{equation}
	is finite. The above sum is taken over cuspidal automorphic representations $\pi$ of $GL(n, \A_E)$ invariant under $\Gamma$ which are unramified at every finite place. 
\end{theorem}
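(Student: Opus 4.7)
The plan is to exhibit the sum \eqref{eq:application} as a spectral contribution in the twisted Arthur-Selberg trace formula for $\tild G = GL(n) \rtimes \sigma$ over $E$ (reduced to $\Q$ by restriction of scalars, cf.~\cref{rem:defQ}), and invoke \cref{spectral} to conclude its finiteness. Under \cref{assump} the Root Cone Lemma holds for this cyclic base change setting, so \cref{geom} applies and both sides of the twisted trace formula extend continuously to the Schwartz-type space $\CCC(\tild G(\A), K)$, justifying the spectral manipulations below.

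First I would set $f = \phi_\infty \otimes \mathbf{1}_{\K_f}$, which is bi-$\K$-invariant and lies in $\CCC(\tild G(\A), K)$ with $K = \K_f$. Applying \cref{spectral}, the spectral side $J^{\tild G}(f, \omega)$ decomposes as in \eqref{eq:spectral} into an absolutely convergent sum over triples $(\tild L, M, \tild w)$. The term with $M = L = G$ and $\tild w$ arising from $\sigma$ is
\[ \sum_{\pi \in \Pi_{\mathrm{disc}}(G)} \trace\bigl(\tild \pi(\delta_\sigma, f, \omega)\bigr), \]
where $\delta_\sigma \in \tild G(\Q)$ represents the twist by $\sigma$. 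Since $\mathbf{1}_{\K_f}$ annihilates any $\pi$ ramified at a finite place, only everywhere-unramified $\pi$ contribute, and on these the trace factors as $\prod_{w \mid \infty} \trace \pi_w(\phi_w)$ times a bounded multiplicity accounting for the choice of $\tild\pi$-extension.

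Next I would identify the residues of the Rankin-Selberg $L$-functions with a proper-Levi spectral term. For the maximal Levi $M' \subsetneq G$ with $\dim \aaa_{\tild{M'}}^{\tild G} = 1$, the intertwining operator $M_{P \mid \sigma P}(0)$ acting on the $\pi$-isotypic component of the space induced from $M'(\A_E)$ is normalized by a ratio whose numerator/denominator factor into values of Rankin-Selberg $L$-functions $L(s, \pi \times \tild\pi^\sigma)$. Writing the $(\tild G, \tild{M'})$-family $\MMM_{\tild{M'}}(P, \nu)$ in terms of these normalizing factors and using that $L^\infty(s, \pi \times \tild\pi^\sigma)$ has a simple pole at $s = 1$ precisely when $\pi \simeq \pi^\sigma$, the one-dimensional spectral integral in \eqref{eq:spectral} collapses into a sum of residues at $s = 1$, producing \eqref{eq:application} up to elementary combinatorial factors.

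The hardest step will be this identification: matching $\Res_{s=1} L^\infty(s, \pi \times \tild\pi^\sigma)$ with an explicit piece of $J_{M', \tild w}^{\tild G}(f, \omega)$. It requires (i) writing the normalizing factors of the relevant $\sigma$-twisted intertwining operators explicitly as Rankin-Selberg products, (ii) showing that the logarithmic derivatives and alternating signs packaged in $\epsilon_{\tild P}(\Lambda)$ and $\MMM_{\tild{M'}}(P, \nu)$ conspire to leave a pure residue at $s = 1$ rather than higher-order contributions, and (iii) verifying that contributions from the remaining Levis $\tild L$ either cancel or assemble into a separately convergent expression. Once this identification is in place, the absolute convergence provided by \cref{spectral} translates directly into the finiteness of \eqref{eq:application}.
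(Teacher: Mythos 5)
Your approach diverges from the paper's in a way that reveals a genuine gap. The essential mechanism by which the Rankin--Selberg $L$-function enters the spectral side is not via the normalizing factors of twisted intertwining operators attached to a proper Levi; it is via the \emph{test function itself}. The paper replaces $\mathbf{1}_{\K_f}$ at each non-Archimedean place $w$ by the \emph{basic function} $\phi_{w,s}$ (a non-compactly supported element of the spherical Hecke algebra) characterized by $\trace \pi_w(\phi_{w,s}) = L_w(s, \pi \times \tild\pi^\sigma)$ for unramified $\pi_w$ and $\Re(s)\gg 0$. The product $\phi_s = \phi_\infty \prod_{w<\infty}\phi_{w,s}$ then lies in $\CCC(\tild G(\A_E), K)$ but \emph{not} in $\CCC_c^\infty$, which is precisely why \cref{spectral} (the extension of the spectral side to non-compactly supported test functions) is needed. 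The cuspidal contribution to the discrete part of the twisted trace formula is then literally $m(\pi) L^\infty(s, \pi\times\tild\pi^\sigma)\prod_{w\mid\infty}\trace\pi_w(\phi_w)$, no intertwining-operator gymnastics required. With your choice $f = \phi_\infty\otimes\mathbf{1}_{\K_f}$, the test function is compactly supported, and the convergence theorem that is the whole point of the paper would be superfluous --- a sign the approach has drifted off course.

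The second half of your plan, extracting $\Res_{s=1}L^\infty(s,\pi\times\tild\pi^\sigma)$ from the one-dimensional $\nu$-integral of the $(\tild G, \tild{M'})$-family attached to a maximal Levi, does not work. The normalizing factors of $\sigma$-twisted intertwining operators involve $L$-functions evaluated at arguments moving with $\nu$, and the continuous integral over $i(\aaa_{\tild{M'}}^{\tild G})^*$ does not isolate a residue at $s=1$; it produces a weighted character integral, not a pole. The paper instead extracts the residue by an approximate functional equation: it forms $f_s$ and the Archimedean spherical Fourier transform $\hat f_s$ (via \cite{Get15}), writes the sum of residues using \cite{Get15}*{Proposition 5.3} as a difference of two contour integrals $\frac{1}{2\pi i}\int_{\Re(s)=\sigma}\trace\pi(f_s)\d s - \frac{1}{2\pi i}\int_{\Re(s)=\sigma}\trace\pi(\hat f_s)\d s$, and bounds each by the (now absolutely convergent) discrete part of the twisted trace formula. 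You would need to replace your step (i)--(iii) with this contour-shift argument, which in turn forces the use of a non-compactly supported test function and therefore the continuity theorem you are trying to invoke.
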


\begin{remark}
The functorial transfer of automorphic representations attached to the $L$-homomorphism 
\[ b_{E/F} : {}^L\GL(n)_F \to {}^L\Res_{E/F}\GL(n)_{E} \]
when $\Gal(E/F)$ is cyclic of prime order is understood completely thanks to the work of Arthur and Clozel \cite{AC}. An important case is when $\Gal(E/F)$ is a universal perfect central extension of a simple nonabelian group, hence a quasi-simple group, see \cite{MR2977357}*{\S 7}. In this case by \cite{MR1800754}*{Corollary}, we can assume that $\Gal(E/F) = \sprod{\sigma}{\theta}$. For conjectural applications to Langlands' ``Beyond Endoscopy" program to approach Functoriality, it is important to understand residues of $L$-functions coming from various $L$-homomorphisms. The result at hand gives a geometric expression for such a sum of residues. 
\end{remark}

\begin{remark}
	Up to terms in the spectral expansion corresponding to proper Levi subgroups, the above sum of residues equals the geometric side of the twisted trace formula. This geometric interpretation, rather than absolute convergence is the contention of the theorem. Absolute convergence follows easily from that of the usual (non-twisted) trace formula. 
\end{remark}

\begin{proof}
	The convergence of the expression
	\[ \sum_{\pi \simeq \pi^\Gamma} \trace \pi_\infty(\phi_\infty) \displaystyle L^\infty(s, \pi \times \tild \pi^\sigma) \]
	for $\Re(s) \gg 0$ would follow by applying the convergence of the spectral side of the twisted trace formula to a certain basic function. However taking the residue at $s=1$ is a delicate question to prove which, will require the main result from \cite{Get15} about spherical Fourier transforms. 
	Let $F'$ be the fixed field of $\theta$ in $E$ and consider the group $G = \Res_{E/F'} GL(n)_E$. The automorphism $\theta$ acts on 
	\[ G(E) \cong GL(n, E) \times \cdots \times GL(n, E) \]
	(the number of copies being the order of $\theta$, say $\ell$) and we can form the semidirect product $G \rtimes \langle \theta \rangle$ of which $G \rtimes \theta$ is a coset. Since this automorphism $\theta$ preserves the Borel subgroup and torus, we can identify the group $G$ with the coset $G \rtimes \theta$ using the map $x \mapsto x \rtimes \theta$ and also identify functions on both cosets. We will construct the test function in $\CCC(\tild G(\A_E), K)$ which by the above identification and \cref{bijectionlemma} can be considered on the space $\CCC(G(\A_E), K)$. We will apply the spectral side convergence result to this function. 
	
	It is known that for every non-Archimedean place $w$, there exists a unique smooth function $\phi_{w, s}$ on $G(E_w)$, the basic function, such that 
	\begin{equation} \label{eq:L}
		\trace \pi_w(\phi_{w, s}) = L_w(s, \pi \times \tild \pi^\sigma) 
	\end{equation}
	holds for every irreducible admissible unramified representation $\pi_w$ of $G(E_w)$ and $\Re(s) \gg 0$. (See \cite{MR3220933} and \cite{MR2977357} for details.) Using the decomposition $\tild G(\A_E) = G(\A^\infty) \tild G(\A_\infty)$ define the function $\phi_s = \phi_\infty \prod_{w < \infty} \phi_{w, s}$. Then for $\Re(s)$ large enough depending on $n$, the function 
	\[ f_s(y) = \int_{A_G} \phi_s(ay) \d a, \qquad y \in \tild G(\A_E) \]
	as well as its Archimedean (spherical) Fourier transform (as defined in \cite{Get15})
	\[ \hat f_s(y) = \int_{A_G} (\mathcal F_{r, \psi}(\phi_\infty) \prod_{w < \infty} \phi_{w, s})(ay) \d a \]
	are elements of $\CCC(\tild G(\A_E), K)$. Here $\psi$ is the additive character used to define the Fourier transform and for the case at hand, $r$ can be taken to be the representation
	\[ r : {}^L{\Res_{E/F}} \GL(n) \to \GL(V)\]
	satisfying  
	\[ L(s, \pi, r) = L(s, \pi \times \pi^{\vee \sigma}). \]
	Note that the Fourier transform $\mathcal F_{r, \psi}(\phi_\infty)$ of the Archimedean component is in the space $\mathcal S^p(G(E_\infty) // K_\infty)$ of \cite{Get15}*{\S 3.3} for $p \in (0,1]$ so in particular satisfies the condition at the Archimedean component of our class $\mathcal C(\tild G(\A), K)$. If $\pi$ is a cuspidal automorphic representation of $G(\A_E)$ then its contribution to the spectral side of the trace formula for $\tild G$ will be nonzero precisely if it is invariant under $\theta$ and thanks to the choice of the test function, would then equal 
	\[ m(\pi) L^S(s, \pi \times \tild \pi^\sigma) \prod_{w \in S} \trace \pi_w(\phi_w). \]
	Although \Cref{eq:L} is valid for $\Re(s) \gg 0$, the completed Rankin-Selberg $L$-function $L(s, \pi \times \tild \pi^\sigma)$ is known to be entire when $\pi \not\simeq \pi^\sigma$. Clearly the residue at $s = 1$ in this case is zero. However if $\pi \simeq \pi^\sigma$ then $L(s, \pi \times \tilde \pi^\sigma)$ has meromorphic continuation to $\C$, satisfies a functional equation, has simple poles at $s=0, 1$ and no other poles \cite{MR701565}. Additionally by the multiplicity one theorem for $GL(n)$ \cite{MR0348047}, $m(\pi) = 1$. By \cite{Get15}*{Proposition 5.3}, the sum
	\[ \sum_\pi \trace \pi_\infty(\phi_\infty). \Res_{s=1} L^\infty(s, \pi \times \tilde \pi^\sigma) \]
	equals 
	\[ \frac{1}{2\pi i} \sum_\pi \int_{\Re(s) = \sigma} \trace \pi(f_s)\ \d s - \frac{1}{2\pi i} \sum_\pi \int_{\Re(s) = \sigma} \trace \pi(\hat f_s)\ \d s\]
	This sum is over cuspidal automorphic representations $\pi$ invariant under the Galois group $\Gal(E/F)$ and unramified at every finite place. Although this depends on [ibid, Conjecture 5.2], the functional equation is known for Rankin-Selberg $L$-functions. As explained above the two terms in the above difference are both bounded by the discrete part of the twisted trace formula with test functions satisfying \cref{eq:L} for $\Re(s) = \sigma$ sufficiently large. Thus their difference is finite. 
\end{proof}


\begin{bibdiv}
\begin{biblist}*{labels={alphabetic}}

    \bib{MR518111}{article} 
    {
       author={Arthur, James G.},
       title={A trace formula for reductive groups. I. Terms associated to
       classes in $G({\bf Q})$},
       journal={Duke Math. J.},
       volume={45},
       date={1978},
       number={4},
       pages={911--952},
       issn={0012-7094},
       review={\MR{518111}},
    }

	\bib{MR625344}{article}
	{
       author={Arthur, James},
       title={The trace formula in invariant form},
       journal={Ann. of Math. (2)},
       volume={114},
       date={1981},
       number={1},
       pages={1--74},
       issn={0003-486X},
       review={\MR{625344}},
       doi={10.2307/1971376},
    }
    
    \bib{MR3135650}{book}		
    {
       author={Arthur, James},
       title={The endoscopic classification of representations},
       series={American Mathematical Society Colloquium Publications},
       volume={61},
       note={Orthogonal and symplectic groups},
       publisher={American Mathematical Society, Providence, RI},
       date={2013},
       pages={xviii+590},
       isbn={978-0-8218-4990-3},
       review={\MR{3135650}},
       doi={10.1090/coll/061},
    }    
    
	\bib{AC}{book} 
	{
       author={Arthur, James},
       author={Clozel, Laurent},
       title={Simple algebras, base change, and the advanced theory of the trace
       formula},
       series={Annals of Mathematics Studies},
       volume={120},
       publisher={Princeton University Press, Princeton, NJ},
       date={1989},
       pages={xiv+230},
       isbn={0-691-08517-X},
       isbn={0-691-08518-8},
       review={\MR{1007299}},
    }
	\bib{MR546598}{article} 
    {
       author={Borel, A.},
       author={Jacquet, H.},
       title={Automorphic forms and automorphic representations},
       note={With a supplement ``On the notion of an automorphic
       representation''\ by R. P. Langlands},
       conference={
          title={Automorphic forms, representations and $L$-functions},
          address={Proc. Sympos. Pure Math., Oregon State Univ., Corvallis,
          Ore.},
          date={1977},
       },
       book={
          series={Proc. Sympos. Pure Math., XXXIII},
          publisher={Amer. Math. Soc., Providence, R.I.},
       },
       date={1979},
       pages={189--207},
       review={\MR{546598}},
    }
 
	\bib{MR1890629}{book} 
	{
		author={Bourbaki, Nicolas},
		title={Lie groups and Lie algebras. Chapters 4--6},
		series={Elements of Mathematics (Berlin)},
		note={Translated from the 1968 French original by Andrew Pressley},
		publisher={Springer-Verlag, Berlin},
        date={2002},
        pages={xii+300},
        isbn={3-540-42650-7},
        review={\MR{1890629}},
        doi={10.1007/978-3-540-89394-3},
	}    
   
    \bib{CLL}{article} 
    {    
    	author={Clozel, Laurent},
		author={Labesse, Jean-Pierre},
		author={Langlands, Robert},
		title={Friday Morning Seminar on the Trace Formula},
		journal={Lecture Notes, Institute for Advanced Study, Princeton},
		date={1984},
	}
	
	\bib{MR1153249}{book} 
	{
	   author={Fulton, William},
	   author={Harris, Joe},
	   title={Representation theory},
	   series={Graduate Texts in Mathematics},
       volume={129},
       note={A first course;
       Readings in Mathematics},
       publisher={Springer-Verlag, New York},
       date={1991},
       pages={xvi+551},
       isbn={0-387-97527-6},
       isbn={0-387-97495-4},
       review={\MR{1153249}},
       doi={10.1007/978-1-4612-0979-9},
    }
    
    \bib{FL11a}{article} 
    {
       author={Finis, Tobias},
       author={Lapid, Erez},
       title={On the continuity of Arthur's trace formula: the semisimple terms},
       journal={Compos. Math.},
       volume={147},
       date={2011},
       number={3},
       pages={784--802},
       issn={0010-437X},
       review={\MR{2801400}},
       doi={10.1112/S0010437X11004891},
    }
    
    \bib{FL11b}{article} 
    {
       author={Finis, Tobias},
       author={Lapid, Erez},
       title={On the spectral side of Arthur's trace formula---combinatorial setup},
       journal={Ann. of Math. (2)},
       volume={174},
       date={2011},
       number={1},
       pages={197--223},
       issn={0003-486X},
       review={\MR{2811598}},
       doi={10.4007/annals.2011.174.1.6},
    }

   \bib{FL16}{article} 
   {
	   author={Finis, Tobias},
	   author={Lapid, Erez},
	   title={On the continuity of the geometric side of the trace formula},
	   journal={Acta Math. Vietnam.},
	   volume={41},
	   date={2016},
	   number={3},
	   pages={425--455},
	   issn={0251-4184},
	   review={\MR{3534542}},
	   doi={10.1007/s40306-016-0176-x},
	}

    \bib{FLM}{article} 
    {
       author={Finis, Tobias},
       author={Lapid, Erez},
       author={M{\"u}ller, Werner},
       title={On the spectral side of Arthur's trace formula---absolute
       convergence},
       journal={Ann. of Math. (2)},
       volume={174},
       date={2011},
       number={1},
       pages={173--195},
       issn={0003-486X},
       review={\MR{2811597}},
       doi={10.4007/annals.2011.174.1.5},
    }
    
	\bib{MR2977357}{article} 
	{
       author={Getz, Jayce R.},
       title={An approach to nonsolvable base change and descent},
       journal={J. Ramanujan Math. Soc.},
       volume={27},
       date={2012},
       number={2},
       pages={143--211},
       issn={0970-1249},
       review={\MR{2977357}},
    }
	
	\bib{Get15}{article}{ 
       author={Getz, Jayce R.},
       title={Nonabelian Fourier transforms for spherical representations},
       journal={eprint arXiv:1506.09128},
       date={2015},
    }
    
    \bib{MR1800754}{article} 
    {
       author={Guralnick, Robert M.},
       author={Kantor, William M.},
       title={Probabilistic generation of finite simple groups},
       note={Special issue in honor of Helmut Wielandt},
       journal={J. Algebra},
       volume={234},
       date={2000},
       number={2},
       pages={743--792},
       issn={0021-8693},
       review={\MR{1800754}},
       doi={10.1006/jabr.2000.8357},
    }    
    
    \bib{HC}{book}
    {
       author={Harish-Chandra},
       title={Automorphic forms on semisimple Lie groups},
       series={Notes by J. G. M. Mars. Lecture Notes in Mathematics, No. 62},
       publisher={Springer-Verlag, Berlin-New York},
       date={1968},
       pages={x+138},
       review={\MR{0232893}},
    }
    
    \bib{MR701565}{article}		
    {
       author={Jacquet, H.},
       author={Piatetskii-Shapiro, I. I.},
       author={Shalika, J. A.},
       title={Rankin-Selberg convolutions},
       journal={Amer. J. Math.},
       volume={105},
       date={1983},
       number={2},
       pages={367--464},
       issn={0002-9327},
       review={\MR{701565}},
       doi={10.2307/2374264},
    }
    
    \bib{MR0579181}{book}	
    {
       author={Langlands, Robert P.},
       title={On the functional equations satisfied by Eisenstein series},
       series={Lecture Notes in Mathematics, Vol. 544},
       publisher={Springer-Verlag, Berlin-New York},
       date={1976},
       pages={v+337},
       review={\MR{0579181}},
    }
    
    \bib{LW}{book} 
    {
       author={Labesse, Jean-Pierre},
       author={Waldspurger, Jean-Loup},
       title={La formule des traces tordue d'apr\`es le Friday Morning Seminar},
       language={French},
       series={CRM Monograph Series},
       volume={31},
       note={With a foreword by Robert Langlands [dual English/French text]},
       publisher={American Mathematical Society, Providence, RI},
       date={2013},
       pages={xxvi+234},
       isbn={978-0-8218-9441-5},
       review={\MR{3026269}},
    }
	
	\bib{LM}{article}
	{
       author={Lapid, Erez},
       author={M\"uller, Werner},
       title={Spectral asymptotics for arithmetic quotients of ${\rm SL}(n,\Bbb R)/{\rm SO}(n)$},
       journal={Duke Math. J.},
       volume={149},
       date={2009},
       number={1},
       pages={117--155},
       issn={0012-7094},
       review={\MR{2541128}},
       doi={10.1215/00127094-2009-037},
    }

    \bib{mathoverflow}{misc} 
    {    
    	title={Existence of solution to these inequalities},    
    	author={Pietro Majer}, 
    	note={URL: http://mathoverflow.net/q/260818 (version: 2017-01-31)},    
    	eprint={http://mathoverflow.net/q/260818},    
    	organization={MathOverflow}  
	}
    
    \bib{MW}{book} 
    {
       author={M{\oe}glin, C.},
       author={Waldspurger, J.-L.},
       title={Spectral decomposition and Eisenstein series},
       series={Cambridge Tracts in Mathematics},
       volume={113},
       note={Une paraphrase de l'\'Ecriture [A paraphrase of Scripture]},
       publisher={Cambridge University Press, Cambridge},
       date={1995},
       pages={xxviii+338},
       isbn={0-521-41893-3},
       review={\MR{1361168}},
       doi={10.1017/CBO9780511470905}
    }
    
    \bib{MWTFL1}{book} 
    {
       author={M{\oe}glin, C.},
       author={Waldspurger, J.-L.},
       title={Stabilisation de la formule des traces tordue},
       series={Progress in Mathematics, 316},
       volume={1},
       publisher={Birkh\"auser Boston, Inc., Boston, MA},
       date={2016},
       pages={xxviii+587},
       isbn={3319300482},
       doi={10.1007/978-3-319-30049-8}
    }
        
    \bib{MWTFL2}{book} 
    {
       author={M{\oe}glin, C.},
       author={Waldspurger, J.-L.},
       title={Stabilisation de la formule des traces tordue},
       series={Progress in Mathematics, 317},
       volume={2},
       publisher={Birkh\"auser Boston, Inc., Boston, MA},
       date={2016},
       pages={xxviii+727},
       isbn={978-3-319-30057-3},
       doi={10.1007/978-3-319-30049-8}
    }
       
    \bib{MR3338302}{article}	
    {
       author={Mok, Chung Pang},
       title={Endoscopic classification of representations of quasi-split
       unitary groups},
       journal={Mem. Amer. Math. Soc.},
       volume={235},
       date={2015},
       number={1108},
       pages={vi+248},
       issn={0065-9266},
       isbn={978-1-4704-1041-4},
       isbn={978-1-4704-2226-4},
       review={\MR{3338302}},
       doi={10.1090/memo/1108},
    } 
    
    \bib{MS}{article}		
    {
       author={M\"uller, W.},
       author={Speh, B.},
       title={Absolute convergence of the spectral side of the Arthur trace
       formula for ${\rm GL}_n$},
       note={With an appendix by E. M.\ Lapid},
       journal={Geom. Funct. Anal.},
       volume={14},
       date={2004},
       number={1},
       pages={58--93},
       issn={1016-443X},
       review={\MR{2053600}},
       doi={10.1007/s00039-004-0452-0},
    }
       
    \bib{MR2653248}{article}	
    {
       author={Ng\^o, Bao Ch\^au},
       title={Le lemme fondamental pour les alg\`ebres de Lie},
       language={French},
       journal={Publ. Math. Inst. Hautes \'Etudes Sci.},
       number={111},
       date={2010},
       pages={1--169},
       issn={0073-8301},
       review={\MR{2653248}},
       doi={10.1007/s10240-010-0026-7},
    }
	    
   \bib{MR3220933}{article}		
   {
       author={Ng\^o, Bao Ch\^au},
       title={On a certain sum of automorphic $L$-functions},
       conference={
       title={Automorphic forms and related geometry: assessing the legacy of
          I. I. Piatetski-Shapiro},
          },
       book={
          series={Contemp. Math.},
          volume={614},
          publisher={Amer. Math. Soc., Providence, RI},
       },
       date={2014},
       pages={337--343},
       review={\MR{3220933}},
       doi={10.1090/conm/614/12270},
    }

    \bib{sagemath}{misc}		
    {
      key          = {SageMath},
      author       = {The Sage Developers},
      title        = {{S}ageMath, the {S}age {M}athematics {S}oftware {S}ystem ({V}ersion 8.0)},
      note         = {{\tt http://www.sagemath.org}},
      year         = {2017},
    }
    
    \bib{MR0088511}{article}	
    {
       author={Selberg, A.},
       title={Harmonic analysis and discontinuous groups in weakly symmetric
       Riemannian spaces with applications to Dirichlet series},
       journal={J. Indian Math. Soc. (N.S.)},
       volume={20},
       date={1956},
       pages={47--87},
       review={\MR{0088511}},
    }

    \bib{Shahidi}{book} 
    {
	   author={Shahidi, Freydoon},
	   title={Eisenstein series and automorphic $L$-functions},
	   series={American Mathematical Society Colloquium Publications},
	   volume={58},
	   publisher={American Mathematical Society, Providence, RI},
	   date={2010},
	   pages={vi+210},
	   isbn={978-0-8218-4989-7},
	   review={\MR{2683009}},
	   doi={10.1090/coll/058},
	}
    
    \bib{MR0348047}{article}	
    {
       author={Shalika, J. A.},
       title={The multiplicity one theorem for ${\rm GL}_{n}$},
       journal={Ann. of Math. (2)},
       volume={100},
       date={1974},
       pages={171--193},
       issn={0003-486X},
       review={\MR{0348047}},
       doi={10.2307/1971071},
    }
    
    \bib{MR0225131}{book}
    {
       author={Tr{\`e}ves, Fran{\c{c}}ois},
       title={Topological vector spaces, distributions and kernels},
       publisher={Academic Press, New York-London},
       date={1967},
       pages={xvi+624},
       review={\MR{0225131}},
    }
    
\end{biblist}
\end{bibdiv}

\end{document}